\newcommand{\titleshort}{{Group Uncertainty}}
\newcommand{\authorshort}{{Zhou and Min}}
\newcommand{\apptitle}{Supplemental Material}
\setlist{noitemsep, topsep=0pt}
\numberwithin{equation}{section}
\theoremstyle{theorem}
\newtheorem{theorem}{Theorem}[section]
\newtheorem{lemma}[theorem]{Lemma}
\newtheorem{proposition}[theorem]{Proposition}
\newtheorem{corollary}[theorem]{Corollary}
\theoremstyle{definition}
\newtheorem{remark}{Remark}
\newtheorem{assumption}{Assumption}
\newcommand{\myappendix}{\section*{\hfil \apptitle \hfil}\appendix}
\newcommand{\bfX}{X}
\newcommand{\bfW}{{W}}
\newcommand{\bfC}{\Psi}
\newcommand{\bfD}{D}
\newcommand{\bfH}{H}
\newcommand{\bfM}{M}
\newcommand{\bfI}{I}
\newcommand{\calM}{\mathcal{M}}
\newcommand{\calG}{\mathcal{G}}
\newcommand{\calL}{\mathcal{L}}
\newcommand{\calP}{\mathcal{P}}
\newcommand{\scrB}{\mathscr{B}}
\newcommand{\scrC}{\mathscr{C}}
\newcommand{\scrD}{\mathscr{D}}
\newcommand{\scrS}{\mathscr{S}}
\newcommand{\scrX}{\mathscr{X}}
\newcommand{\R}{\mathbb{R}}
\newcommand{\N}{\mathbb{N}}
\newcommand{\mbS}{\mathbb{S}}
\newcommand{\Prob}{\mathbb{P}}
\newcommand{\alp}{\alpha}
\newcommand{\sbeta}{\beta^*}
\newcommand{\hbeta}{\hat{\beta}}
\newcommand{\tdbeta}{\tilde{\beta}}
\newcommand{\Dtdbeta}{\Delta\tdbeta}
\newcommand{\hTheta}{\hat{\Theta}}
\newcommand{\hsigma}{\hat{\sigma}}
\newcommand{\bmSigma}{{\Sigma}}
\newcommand{\tddelta}{\tilde{\delta}}
\newcommand{\hdelta}{\hat{\delta}}
\newcommand{\sdelta}{\delta^*}
\newcommand{\shdelta}{\hdelta^*}
\newcommand{\eps}{\epsilon}
\newcommand{\lmd}{\lambda}
\newcommand{\hlmd}{\hat{\lmd}}
\newcommand{\Omg}{\Omega}
\newcommand{\E}{\mathbb{E}}
\newcommand{\calE}{\mathcal{E}}
\newcommand{\defi}{\mathop{:=}}
\newcommand{\dnorm}{\mathcal{N}}
\newcommand{\trans}{\mathsf{T}}
\newcommand{\supth}{\text{th}}
\newcommand{\sqn}{\surd{n}}
\newcommand{\sqpj}{\surd{p_j}}
\newcommand{\sqpmax}{\surd{p_{\max}}}
\newcommand{\frn}{\frac{1}{n}}
\DeclareMathOperator*{\argmin}{argmin}
\DeclareMathOperator{\diag}{diag}
\DeclareMathOperator{\sgn}{sgn}
\DeclareMathOperator{\tr}{tr}
\newcommand{\siglevel}{\gamma}
\newcommand{\myrev}[1]{#1}
\begin{document}

\title{Uncertainty Quantification Under Group Sparsity}
\author{Qing Zhou\thanks{UCLA Department of Statistics}
\thanks{Supported in part by NSF grants DMS-1055286, 
DMS-1308376 and IIS-1546098 (email: zhou@stat.ucla.edu).} \and Seunghyun Min$^*$ 
}
\maketitle

\begin{abstract}

Quantifying the uncertainty in penalized regression under group sparsity is an important open question. 
We establish, under a high-dimensional scaling, 
the asymptotic validity of a modified parametric bootstrap method for the group lasso,
assuming a Gaussian error model and mild conditions on the design matrix and the true coefficients. 
Simulation of bootstrap samples provides simultaneous inferences
on large groups of coefficients.
Through extensive numerical comparisons, we demonstrate that our bootstrap method
performs much better than popular competitors, highlighting its practical utility.
The theoretical result is generalized to other block norm penalization and sub-Gaussian errors,
which further broadens the potential applications.

{\em Keywords:} confidence region, group lasso, high-dimensional inference, 
parametric bootstrap, sampling distribution, significance test.

\end{abstract}

\section{Introduction}\label{sec:intro}

\subsection{Overview and background}

The surge of recent work on statistical inference for high-dimensional models
can be roughly grouped into a few categories. 
The first group of methods quantify the uncertainty in the lasso \citep{Tibshirani96} and its modifications
via subsampling, data splitting, or the bootstrap, such as \cite{Wasserman09}, \cite{Meinshausen09},
\cite{Chatterjee13}, \cite{LiuYu13} and \cite{Zhou14}.
The second category makes inference along the lasso solution path or conducts post-selection inference
via conditional tests, including \cite{Lockhart13}, \cite{Taylor14} and \cite{Lee14}.
Methods in the third category rely on a de-biased lasso to construct confidence intervals
or perform significance tests \citep{ZhangZhang11,vandeGeer13,Javanmard13b}.
In addition, \cite{NingLiu14} and \cite{Voorman14} have proposed different score tests for penalized M-estimators
or penalized regression.
Under certain sparsity assumptions on the parameter space, all the methods make use of regularization, 
particularly $\ell_1$ penalization. 

This paper focuses on statistical inference under group sparsity,
which arises naturally in many applications. Furthermore, individual coefficients that are too small to
detect may be reliably identified when grouped together. This gives another motivation
for the present work. Consider the linear model
\begin{equation}\label{eq:model}
y=\bfX \beta_0 + \varepsilon,
\end{equation}
where $\beta_0\in\R^p$ is the unknown true parameter, $y\in\R^n$ is the response vector, 
$\bfX=(X_1\mid \cdots \mid X_p)\in \R^{n \times p}$ is the design matrix and
$\varepsilon\in\R^n$ is an independent and identically distributed 
error vector with mean zero and variance $\sigma^2$.
Suppose that the predictors are partitioned into $J$ nonoverlapping groups, denoted by
$\calG_j$ for $j=1,\ldots,J$. That is, 
$\cup_{j=1}^J \calG_j=\{1,\ldots,p\}$ and $\calG_j \cap \calG_k= \varnothing$ for every $j \ne k$.
For $\beta=(\beta_1,\ldots,\beta_p)$, let $\beta_{(j)}=(\beta_k)_{k\in\calG_j}$ for each $j$.
The group lasso \citep{YuanLin06} is then defined as
\begin{equation}\label{eq:grplassodef}
\hbeta \in \argmin_{\beta\in\R^p} \left\{\frac{1}{2} \|y-\bfX \beta \|^2 + 
n \lambda \sum_{j=1}^J w_j \| \beta_{(j)}\|\right\},
\end{equation}
where $\|\cdot\|$ denotes the Euclidean norm.
The weights $w_j >0$ and are usually in proportion to $\sqpj$, 
where $p_j = |\calG_j|$ is the size of the $j$th group. In \eqref{eq:grplassodef},
we use $\in$ to indicate that $\hbeta$, when not unique, is one of the minimizers.
Given the group structure $\calG=(\calG_1,\ldots,\calG_J)$ and $\alpha \in[1,\infty]$, 
the associated $(1,\alpha)$-group norm, sometimes referred to as the $\ell_{1,\alp}$ norm, is defined as
\begin{equation}\label{eq:groupnorm}
\|\beta\|_{\calG,\alpha}=\sum_{j=1}^J \| \beta_{(j)}\|_{\alpha},
\end{equation}
where $\|\cdot\|_{\alp}$ denotes the $\ell_{\alp}$ norm of a vector.
Thus, the penalty term in \eqref{eq:grplassodef} is a weighted $(1,2)$-group norm of $\beta$.
One may also use other $(1,\alpha)$-group norm penalties in the above formulation \citep{Negahban12}.
See \cite{Huang12} for recent developments on regularization methods respecting group structure.
Despite the wide applications of these methods, there are quite limited efforts
devoted to uncertainty quantification and inference under group sparsity.
\cite{MitraZhang14} propose to de-bias the group lasso for inference about a group of coefficients,
which generalizes the idea of those methods in the third category reviewed above.
For each group, a highly nontrivial optimization problem has to be solved, and thus it might be difficult
to apply this approach to a large number of groups.
\cite{Meinshausen15} develops a conservative group-bound method to detect whether a group of
highly correlated variables have nonzero coefficients. 

\subsection{Contributions of this work}\label{sec:contributions}

In this article, we tackle the problem of group inference from a different angle. 
Our goal is to directly quantify the uncertainty in the group lasso \eqref{eq:grplassodef} 
and other penalized estimators under group sparsity. Neither of the two 
aforementioned group inference methods provides an answer to this question. 
Instead, we consider the parametric bootstrap, a simple simulation-based approach.
Given a proper choice of a point estimator $\tdbeta$, we simulate an error vector $\varepsilon^*$ and
put $y^*=\bfX\tdbeta + \varepsilon^*$. After minimizing the penalized loss \eqref{eq:grplassodef}
we obtain a group lasso solution $\sbeta$ for the simulated data $y^*$. 
Under quite mild conditions on the
design matrix $\bfX$ and the true parameter $\beta_0$, we provide theoretical justification for 
using the distribution of $(\sbeta-\tdbeta)$, conditional on $\tdbeta$, 
to make inference about $\beta_{0(j)}$ for all $j$.
Our theory is developed under a high-dimensional asymptotic framework as $J\gg n\to\infty$
and applies to large groups with size $p_j\to\infty$. 
To the best of our knowledge, such consistency in estimation of sampling distributions has not been
established for group norm penalized estimators. 
Allowing for unbounded group sizes makes the group lasso
fundamentally different from the lasso, 
so this work represents a distinct contribution from existing bootstrap methods
for a lasso-type estimator \citep{Chatterjee13,Zhou14}. It is also different from the work by
\cite{McKeague15} and by \cite{Shah15}, in which the bootstrap is used for correlation screening
or simulation of a scaled residual without considering any group structure.

In addition to the novel theoretical result, 
an important strength of this work is its great potential in practical applications, as seen from the following aspects.
\begin{enumerate}
\item {\em Flexibility}: By simulation one can easily estimate the distributions of many functions
of $\hbeta$, and thus has much more freedom 
in choosing which statistic to use in an inference problem. The simulation approach allows for
interval estimation and significance tests for all groups simultaneously and in fact for all individual coefficients as well. 
This is in sharp contrast to the de-biased methods which solve an optimization
problem for each individual coefficient or each coefficient group.

\item {\em Implementation}: There is no need for any additional optimization algorithm. 
For most cases, simple thresholding of the group lasso is a valid choice for the point estimate $\tdbeta$.
Therefore, a practitioner can simply
use the same software package to find the group lasso solution and to quantify its uncertainty.
According to our empirical study, a few hundred bootstrap samples are usually sufficient 
for accurate inference,
which as a whole cost much less time than solving $p$ optimization problems 
as used in the de-biased lasso approach. 

\item {\em Performance}: Through extensive numerical comparisons, we demonstrate that
our bootstrap method outperforms competing methods by a large margin for finite samples, which implies that this method
is much less dependent on asymptotic approximation. Moreover, our method is not very sensitive
to the threshold value used to define $\tdbeta$, which is the only user-input parameter in our current implementation.
\end{enumerate}
Although out of the scope of this paper, the method of estimator augmentation \citep{Zhou14}
may greatly improve the simulation efficiency, particularly, in 
calculating tail probabilities in a significance test,
making a simulation-based method more appealing in applications.
This has been demonstrated for group inference by \cite{Zhou16}.

\subsection{Organization and notation}

The paper is organized as follows. Our parametric bootstrap method for inference with the group lasso is
proposed and described in Section~\ref{sec:method}. 
We develop asymptotic theory under a Gaussian error distribution in Section~\ref{sec:theory}
to show that our inferential method is valid in a high-dimensional framework.
Numerical results are provided in Section~\ref{sec:numerical} to demonstrate the advantages
of our method in finite-sample inference over competing methods.
In Section~\ref{sec:generalizations}, we generalize our results to the use of the
$(1,\alpha)$-group norm penalty \eqref{eq:groupnorm}, for $\alpha\in(2,\infty]$,
and to sub-Gaussian errors, with a discussion on future work.
All proofs are deferred to 
Supplemental Material which also contains auxiliary theoretical and numerical results.

Notation used throughout the paper is defined here. Define $\N_k=\{1,\ldots,k\}$ for an integer $k\geq 1$. 
Let $A\subset \N_m$ be an index set. 
For a vector $v=(v_j)_{1:m}$, denote by $v_A=(v_j)_{j\in A}$ the restriction of $v$ to the components in $A$.
For a matrix $\bfM=(M_{ij})_{n\times m}$ with
columns $M_j$, $j=1,\ldots,m$, define $\bfM_A=(M_{j})_{j\in A}$ as a matrix of size $n\times |A|$
consisting of columns in $A$, and similarly define $\bfM_{BA}=(M_{ij})_{i\in B, j\in A}$ for $B\subset \N_n$.
Denote by $\diag(v)$ the $m\times m$ diagonal matrix with $v$ as the diagonal elements
and by $\diag(\bfM,\bfM')$ the block diagonal matrix with $\bfM$ and $\bfM'$ as the diagonal blocks.
For a square matrix $\bfM\in\R^{m\times m}$, 
$\diag(\bfM)$ extracts the diagonal elements, $\tr(\bfM)$ denotes the trace of $\bfM$, 
and $\Lambda_k(\bfM)$, for $k\in\N_m$, denotes its eigenvalues. 
Moreover, $\Lambda_{\max}(\bfM)$ and $\Lambda_{\min}(\bfM)$ denote the maximum and the minimum eigenvalues,
respectively, or the supremum and the infimum when $m\to\infty$. 
Denote by $\bfI_m$ the $m\times m$ identity matrix.
Given the group structure $\calG$, let $\calG_A = \cup_{j\in A} \calG_j\subset \N_p$ for $A \subset \N_J$.
For a vector $v=(v_j)_{1:p}$, define $v_{(A)} = v_{\calG_A}$
and, in particular, $v_{(j)}=v_{\calG_j}$. 
We call $G(v)=\{j\in\N_J: v_{(j)}\ne 0\}$ the set of active groups of $v$.
For an $m \times p$ matrix $\bfM$, $\bfM_{(A)}=\bfM_{\calG_A}$, and when $m=p$,
$\bfM_{(AB)}=\bfM_{\calG_A \calG_B}$ for $B \subset \N_J$. 
For two sequences $a_n$ and $b_n$, write $a_n=\Omega(b_n)$ if $b_n=O(a_n)$
and $a_n\asymp b_n$ if $a_n=O(b_n)$ and $b_n=O(a_n)$.
Their probabilistic counterparts are written as $\Omega_P$ and $\asymp_P$, respectively. 
We use $\nu[Z]$ to denote the distribution of a random vector $Z$.
Positive constants $c_1$, $c_2$, etc. and positive integers $N_1$, $N_2$, etc. are defined locally
and may have different values from line to line.
Small positive numbers are often denoted by $\epsilon$, which should be distinguished from the error
vector $\varepsilon$.

\section{Bootstrapping the group lasso}\label{sec:method}

\subsection{Bootstrap and inference}\label{sec:pbinference}

We will assume that the noise vector $\varepsilon\sim\dnorm_n(0,\sigma^2\bfI_n)$
throughout the paper until Section~\ref{sec:generalizations}, in which
sub-Gaussian and other error distributions are considered.
Let $\tdbeta$ and $\hsigma$ be point estimates of $\beta_0$ and $\sigma$.
We first describe our proposed parametric bootstrap for the group lasso, 
given point estimates $\tdbeta$ and $\hsigma$, and discuss how to make
inference with a bootstrap sample. In next subsection, we will propose methods to construct
$\tdbeta$ and $\hsigma$ from the data $(y,\bfX)$.

Let $B(y;\lambda)$ denote the set of minimizers of the loss in \eqref{eq:grplassodef}
so that $\hbeta\in B(y;\lambda)$.
Our parametric bootstrap for the group lasso contains two steps:
\begin{enumerate}
\item[(1)] Given $(\tdbeta,\hsigma)$, draw $\varepsilon^*  \sim \dnorm_n(0,\hsigma^2\bfI_n)$ 
and set $y^*=\bfX \tdbeta + \varepsilon^*$;
\item[(2)] Solve \eqref{eq:grplassodef} with $y^*$ in place of $y$ to obtain $\sbeta\in B(y^*;\lambda)$.
\end{enumerate}
After drawing a large sample of $\sbeta$ values via the above procedure, we can make inference
for each group $j\in\N_J$. By default, we choose the function
\begin{equation}\label{eq:inferfunction}
f_j(\sbeta_{(j)}-\tdbeta_{(j)})=\|\bfX_{(j)}(\sbeta_{(j)}-\tdbeta_{(j)})\|^2
\end{equation}
to build a confidence region and carry out a significance test for $\beta_{0(j)}$.
From the bootstrap sample of $\sbeta$,
we estimate the $(1-\siglevel)$-quantile $f_{j,(1-\siglevel)}$ such that
\begin{align*}
\Prob\left\{ \left. f_j(\sbeta_{(j)}-\tdbeta_{(j)})>f_{j,(1-\siglevel)} \right| \bfX,\tdbeta,\hsigma\right\} = \siglevel.
\end{align*}
Then our $(1-\siglevel)$ confidence region for $\beta_{0(j)}$ is 
\begin{align}\label{eq:regionest}
R_{j}(\siglevel)=\left\{\theta\in\R^{p_j}:f_j(\hbeta_{(j)}-\theta)\leq f_{j,(1-\siglevel)}\right\}.
\end{align}
We may also test the hypothesis $H_{0,j}:\beta_{0(j)}=0$,
which will be rejected at level $\siglevel$ if $0 \notin R_{j}(\siglevel)$, that is, if
$\|\bfX_{(j)}\hbeta_{(j)}\|^2> f_{j,(1-\siglevel)}$.
This approach can make inference about $\beta_{0(j)}$ simultaneously for $j=1,\ldots,J$.
One may choose other matrices in place of $\bfX_{(j)}$ to define $f_j$ \eqref{eq:inferfunction}, 
as long as they satisfy some very mild conditions specified in Corollary~\ref{thm:knownvarlaw}. 

\subsection{Estimation of parameters}\label{sec:parameter}

Multiple methods may be applied to obtain the point estimates, $\tdbeta$ and $\hsigma$.
Roughly speaking, $\tdbeta$ must consistently recover
the groups of $\beta_0$ with a large $\ell_2$ norm and $\hsigma$ needs
to be a consistent estimator of $\sigma$ with a certain convergence rate.

One possible way to construct $\tdbeta$ is to threshold the group lasso $\hbeta$,
\begin{equation}\label{eq:thresholdhbeta}
\tdbeta_{(j)}=\hbeta_{(j)}I(\|\hbeta_{(j)}\|>b_{\supth}), \quad j=1,\ldots,J,
\end{equation}
where $b_{\supth}>0$ is a cutoff value. Useful practical guidance
is to choose the cutoff so that all small coefficient groups will be thresholded to zero. 
Let $A=G(\tdbeta)$ be the active groups of $\tdbeta$ and $\hat{M}=\calG_{A}$
be the set of active coefficients of $\tdbeta$. Then we perform a least-squares regression
of $y$ on $\bfX_{\hat{M}}$ to re-calculate the nonzero coefficients of $\tdbeta$, which reduces their bias, 
and to estimate the error variance $\hsigma^2$, provided that $|\hat{M}|<n$.
This will be implemented in our method for the numerical comparisons with more details
provided in Section~\ref{sec:simulationsettings}.
See Section~\ref{sec:parametertheoretical} for other choices of the point estimates
and theoretical justifications.

The parametric bootstrap is commonly used for fixed-dimensional inference problems, 
such as linear regression with $p$ fixed and $n\to\infty$. However, there is no general theory on its validity for  
a high-dimensional problem. 
Thus, rigorous asymptotic theory for our bootstrap method, including the choice of the point estimators, 
will be developed in Section~\ref{sec:theory} to justify its use 
under a setting that allows $J\gg n\to\infty$ and $p_{j}\to\infty$.
If $\varepsilon^*$ is drawn by resampling the residual $\tilde{\varepsilon}=y-\bfX \tdbeta$, 
then our method implements the standard residual bootstrap. 
To the best of our knowledge, consistency of either bootstrap method for the group lasso
under the above high-dimensional setting has not been established in the literature.

\myrev{When the group size $p_j=1$ for all $j$, our method reduces to a parametric bootstrap for
the lasso. In this special case, it is closely related to the modified residual bootstrap proposed by \cite{Chatterjee11}. 
However, the consistency of their method is established under the assumption that $p$ is fixed,
while our theory applies to the more interesting case of $p\gg n$.}

\section{Asymptotic theory}\label{sec:theory}

\subsection{Convergence of the bootstrap distribution}\label{sec:knownvar}

Let $A_0=G(\beta_0)$, $q_0=|A_0|$ and $s_0=|\calG_{A_0}|$ so that $q_0$ is the number of active groups
and $s_0$ the number of active coefficients of $\beta_0$.
Denote by $p_{\min}$ and $p_{\max}$ the minimum and the maximum of $\{p_j\}$, respectively.
Assume that $w_j\in[w_*,w^*]$ for all $j\in\N_J$ with $0<w_*<w^*<\infty$ throughout this section.
We adopt a high-dimensional asymptotic framework for model \eqref{eq:model}, 
where $p=p(n) > n \to \infty$, $q_0,p_{j}$ may be unbounded, and $\sigma^2$ stays as a constant. Accordingly,
$y$, $\bfX$, and $\beta_0$ all depend on $n$. 
For brevity the index $n$ is often suppressed. 
We say a sequence of events $\calE_n$ happens with high probability if $\Prob(\calE_n)\to 1$
as $n\to\infty$.

We first develop theoretical results assuming that the noise variance $\sigma^2$ is known, in which case
we let $\hsigma=\sigma$ in our bootstrap sampling of $\sbeta$. 
To facilitate our analysis, we introduce an intermediate variable
$\hbeta^*\in B(\bfX\beta_0+\varepsilon^*;\lambda)$, which follows the same distribution as $\hbeta$
since $\nu[\varepsilon^*]=\nu[\varepsilon]$. Define centered and rescaled estimators
\begin{equation}\label{eq:defalldelta}
\hdelta= r_n(\hbeta-\beta_0), \quad\quad \shdelta=r_n(\hbeta^*-\beta_0),\quad\quad \sdelta= r_n(\sbeta-\tdbeta),
\end{equation}
where $r_n$ is a sequence of positive numbers to be specified later.
We will show that the $\ell_2$ deviation between $\shdelta$ and $\sdelta$, conditioning on a proper choice of $\tdbeta$,
converges to zero in probability, 
which leads to weak convergence of functions of  $\sdelta$ to $\hdelta$. 
The case of unknown $\sigma$ will be covered in Section~\ref{sec:parametertheoretical}.

We assume the following conditions on $\bfX$ and $\beta_0$. 
Denote by $\bfC=\frn X^{\trans}X$ the Gram matrix. 
Let $\alpha\in[1,\infty]$, $Z\sim\dnorm_p(0,\bfI_p)$ and 
\begin{equation*}
\rho(\alpha^*)=\E\left\{\max_{j\in\N_J} \|Z_{(j)}\|_{\alpha^*}/\sqn\right\}, 
\end{equation*}
where $\alpha^*$ is conjugate to $\alpha$ satisfying $\frac{1}{\alpha}+\frac{1}{\alpha^*}=1$. 

\begin{assumption}\label{as:design}
With high probability,  we have $\Lambda_{\max}(\bfC_{(jj)})\leq \bar{c}<\infty$ for all $j\in\N_J$ and
\begin{equation}\label{eq:groupstrconvexmain}
\frac{1}{n}\|\bfX\Delta\|^2 \geq \kappa_1 \|\Delta\|^2 - \kappa_2 \rho^2(\alpha^*) \|\Delta\|^2_{\calG,\alpha}
\quad\text{ for all } \Delta\in\R^p,
\end{equation}
where $\kappa_1,\kappa_2>0$ are universal constants.
\end{assumption}

\begin{assumption}\label{as:betasignal}
The true coefficient vector $\beta_0$ is sparse:
\begin{equation}\label{eq:sparsescaling}
{q_0 (p_{\max} \vee \log J)}/{\sqn} =o(1),
\end{equation}
and $A_0=S_1 \cup S_2$ such that
\begin{equation}\label{eq:separablebeta}
\inf_{j\in S_1}\frac{\|\beta_{0(j)}\|}{\sqpj} \gg \frac{b_n}{\sqn}
 \quad\text{and}\quad \sup_{j\in S_2}\frac{\|\beta_{0(j)}\|}{\sqpj} \ll \frac{1}{b_n\sqn},
\end{equation}
where $b_n=s_0 \{1\vee {(\log J/p_{\max}})^{1/2}\}$.
\end{assumption}

Our next assumption is on the point estimator $\tdbeta$. For $c>0$, define 
\begin{equation}\label{eq:defdeltaregion}
\scrD(c)=\{\delta\in\R^p: \|\delta_{(j)}\|\leq c \sqpj\quad \text{for all } j \in A_0\}. 
\end{equation} 
\begin{assumption}\label{as:pluginest}
The estimator $\tdbeta=\tdbeta(y,\bfX)$ satisfies $\Prob\{\tdbeta\in\scrB(M_1)\}\to 1$ for some $0<M_1<\infty$,
where the set $\scrB(M_1)$ is defined by
\begin{equation}\label{eq:BM2}
\scrB(M_1)=\{\beta\in\R^p: G(\beta)\subset S_1 \text{ and } r_n(\beta-\beta_0) \in \scrD(M_1)\}.
\end{equation}
\end{assumption}

\myrev{The inequality \eqref{eq:groupstrconvexmain} in Assumption~\ref{as:design} guarantees the 
restricted eigenvalue assumption \citep{Bickel09,Lounici11} under group sparsity,
which is shown in Supplemental Material.
It is also sufficient for the restricted strong convexity condition for the group lasso as demonstrated in \cite{Negahban12}.}
Assumption~\ref{as:betasignal} shows that $\beta_0$ is sparse at both group and individual levels,
and its active groups can be separated into strong and weak signals. 
A beta-min condition \eqref{eq:separablebeta} is satisfied by the strong signal groups.
As implied by \eqref{eq:BM2}, the point estimator $\tdbeta$ must identify only the strong groups
and converge to the active groups of $\beta_0$ at certain rate. 
A detailed discussion of these assumptions with comparisons
to existing methods is provided in Section~\ref{sec:theorycomparison}.

\begin{theorem}\label{thm:asympknownvar}
Consider the model \eqref{eq:model} with $\varepsilon \sim \dnorm_n(0,\sigma^2\bfI_n)$ 
and $p_{\max}/p_{\min}\asymp 1$.
Suppose that Assumptions \ref{as:design} to \ref{as:pluginest} hold.
Choose a suitable $\lambda \asymp \{(p_{\max} \vee \log J)/n\}^{1/2}$. 
Let $r_n$ be a positive sequence such that
\begin{equation}\label{eq:defrn}
(1/\sqpmax)\sup_{j\in A_0}\|\hbeta_{(j)}-\beta_{0(j)}\|\asymp_P r_n^{-1}
\end{equation}
for some $\hbeta$ defined in \eqref{eq:grplassodef}. 
Put $\hdelta=r_n(\hbeta-\beta_0)$ and $\tddelta=r_n(b-\beta_0)$, where $b$ is any minimizer of \eqref{eq:grplassodef}.
Define $\shdelta$ and $\sdelta$ by \eqref{eq:defalldelta} with $\varepsilon^* \sim \dnorm_n(0,\sigma^2\bfI_n)$. 
Then for every $\epsilon>0$,
\begin{align}
&\Prob(\|\hdelta-\tddelta\| > \epsilon \mid \bfX)=o_p(1), \label{eq:difftwohdelta} \\
&\Prob\left\{\|\sdelta-\shdelta\| > \epsilon \mid \bfX,\tdbeta\right\}=o_p(1). \label{eq:DeltatoinP0} 
\end{align}
\end{theorem}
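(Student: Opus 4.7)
The plan is to treat the two conclusions of the theorem separately, since they draw on different pieces of the machinery. Claim~\eqref{eq:difftwohdelta} is essentially a uniqueness-up-to-rate statement for the group lasso minimizer, while~\eqref{eq:DeltatoinP0} is the substantive content and requires a coupling argument between the oracle-reparametrized and bootstrap-reparametrized minimizations; both claims hinge on the restricted strong convexity inequality~\eqref{eq:groupstrconvexmain} built into Assumption~\ref{as:design}.

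For~\eqref{eq:difftwohdelta}, I would first exploit that the quadratic loss depends on $\beta$ only through $\bfX\beta$ and is strictly convex in the fitted vector, so every minimizer $b$ of~\eqref{eq:grplassodef} shares the common prediction $\bfX\hbeta$ and therefore $\bfX(b-\hbeta)=0$. A standard basic-inequality analysis using Assumptions~\ref{as:design}--\ref{as:betasignal} then places both $\hbeta-\beta_0$ and $b-\beta_0$ in the usual restricted cone on which~\eqref{eq:groupstrconvexmain} is effective, and the difference $v:=b-\hbeta$ lies in a cone of comparable type. Applying~\eqref{eq:groupstrconvexmain} to $v$ gives $\kappa_1\|v\|^2\leq \tfrac{1}{n}\|\bfX v\|^2+(\text{lower-order remainder})=o_P(r_n^{-2})$ for $r_n$ as in~\eqref{eq:defrn}, which delivers $r_n\|\hbeta-b\|=o_P(1)$ as required.

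For~\eqref{eq:DeltatoinP0}, I would recast both bootstrap minimizations in a common frame. Setting $\gamma:=\hbeta^*-\beta_0$ and $\eta:=\sbeta-\tdbeta$, both $\gamma$ and $\eta$ minimize functionals with the identical quadratic term $\tfrac{1}{2}\|\varepsilon^*-\bfX\cdot\|^2$ but with the group norm penalty evaluated at $\cdot+\beta_0$ and $\cdot+\tdbeta$, respectively. Writing the corresponding KKT equations and subtracting gives
\[
\tfrac{1}{n}\bfX^\trans\bfX(\eta-\gamma)=\lambda\bigl(s^{\gamma}-s^{\eta}\bigr),
\]
where $s^{\gamma},s^{\eta}$ are subgradients of the two shifted penalties at $\gamma$ and $\eta$. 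Pairing with $\eta-\gamma$ and invoking~\eqref{eq:groupstrconvexmain} reduces the problem to bounding $\langle \eta-\gamma,\,s^{\gamma}-s^{\eta}\rangle$. This inner product splits naturally by group type: on inactive groups $j\notin A_0$ the two penalty functions coincide, so the subgradients can be matched and the contribution is negligible; on strong-signal groups $j\in S_1$ the map $u\mapsto u/\|u\|$ is Lipschitz near $\beta_{0(j)}$, yielding $\|s^{\gamma}_{(j)}-s^{\eta}_{(j)}\|\lesssim \|\tdbeta_{(j)}-\beta_{0(j)}\|/\|\beta_{0(j)}\|$, which is controlled by Assumption~\ref{as:pluginest} and the signal lower bound in~\eqref{eq:separablebeta}; on weak-signal groups $j\in S_2$, where $\tdbeta_{(j)}=0$ and $\|\beta_{0(j)}\|$ is small, the mismatch can be of unit magnitude per group and must be aggregated via the upper bound in~\eqref{eq:separablebeta} and the count bound in~\eqref{eq:sparsescaling}.

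The principal obstacle will be this last piece: because the per-group subgradient mismatches on $S_2$ can be of unit order, the aggregate over $S_2$ only becomes $o_P(r_n^{-1})$ when both halves of Assumption~\ref{as:betasignal} are invoked in concert, and a delicate peeling between the $\ell_2$ and group-$\ell_1$ parts of the lower bound~\eqref{eq:groupstrconvexmain} is needed to absorb it. Carrying out this bookkeeping, while simultaneously ensuring the resulting statement is conditional on $(\bfX,\tdbeta)$ rather than unconditional---which requires uniformity of the noise-driven bounds over the event $\{\tdbeta\in\scrB(M_1)\}$ supplied by Assumption~\ref{as:pluginest}---and verifying that the choice $\lambda\asymp\{(p_{\max}\vee\log J)/n\}^{1/2}$ dominates the KKT residual $\tfrac{1}{n}\max_{j}\|(\bfX^\trans\varepsilon^*)_{(j)}\|$ uniformly over groups, is where the bulk of the technical work will lie.
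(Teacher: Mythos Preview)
Your overall strategy is sound and would lead to a correct proof, but it differs in organization from the paper, and there are a couple of imprecisions worth flagging.

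\textbf{On~\eqref{eq:difftwohdelta}.} The claim that $v=b-\hbeta$ ``lies in a cone of comparable type'' is false in general---the paper notes explicitly that such differences are typically not in a restricted cone---but it is also unnecessary. Inequality~\eqref{eq:groupstrconvexmain} holds for \emph{every} $\Delta$, so what you need is a bound on $\|v\|_{\calG,2}$, not a cone condition. Since each of $\hbeta-\beta_0$ and $b-\beta_0$ satisfies the $\ell_{1,2}$ error bound $h_1$ of Theorem~\ref{thm:grouplasso} on the event $\calE$, the triangle inequality gives $\|v\|_{\calG,2}\leq 2h_1$, and then $\bfX v=0$ together with~\eqref{eq:groupstrconvexmain} yields $\kappa_1\|v\|^2\leq 4\kappa_2\rho^2(2)h_1^2$. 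This is precisely the paper's route: it obtains~\eqref{eq:difftwohdelta} as the degenerate case $\tdbeta=\beta_0$, $h_2=0$ of its nonasymptotic bound (Theorem~\ref{thm:finiteknownvar}).

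\textbf{On~\eqref{eq:DeltatoinP0}.} Your KKT-subtraction approach is a genuine alternative to the paper's method. The paper instead bounds the objective-value perturbation $|V(\delta;\tdbeta,u)-V(\delta;\beta_0,u)|\leq h_2$ uniformly over a localized region $\scrD(M_2)$ (Lemma~\ref{lm:boundVforbeta}), converts this to a prediction-error bound via strong convexity (Lemma~\ref{lm:boundDeltaforbeta}), and then needs a separate localization lemma (Lemma~\ref{lm:eventsubset}) to confine both $\sdelta$ and $\shdelta$ to $\scrD(M_2)$. Your first-order route sidesteps the explicit localization step, since the Lipschitz estimate on $S_1$ only requires $\|\gamma_{(j)}\|,\|\eta_{(j)}\|\ll\|\beta_{0(j)}\|$, which follows directly from the global $\ell_2$ error bound $\tau$ and the beta-min condition in~\eqref{eq:separablebeta}. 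That is a modest simplification.

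However, two of your one-line estimates need repair. First, for $j\notin A_0$ the contributions are not merely ``negligible'': by monotonicity of the subdifferential of $\|\cdot\|$ one has $\langle\eta_{(j)}-\gamma_{(j)},\,s^{\gamma}_{(j)}-s^{\eta}_{(j)}\rangle\leq 0$, so these terms can be dropped from the upper bound outright. Second, for $j\in S_1$ the bound $\|s^{\gamma}_{(j)}-s^{\eta}_{(j)}\|\lesssim\|\tdbeta_{(j)}-\beta_{0(j)}\|/\|\beta_{0(j)}\|$ is false as written, because $s^{\gamma}_{(j)}$ and $s^{\eta}_{(j)}$ are evaluated at different points $\gamma_{(j)}+\beta_{0(j)}$ and $\eta_{(j)}+\tdbeta_{(j)}$, so the difference also depends on $\eta_{(j)}-\gamma_{(j)}$. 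The fix is to split $s^{\gamma}_{(j)}-s^{\eta}_{(j)}$ into the part coming from moving $\gamma_{(j)}\to\eta_{(j)}$ at the fixed shift $\beta_{0(j)}$ (which pairs nonpositively with $\eta_{(j)}-\gamma_{(j)}$ by monotonicity) and the part coming from moving the shift $\beta_{0(j)}\to\tdbeta_{(j)}$ at fixed $\eta_{(j)}$ (which is bounded by $\|\tdbeta_{(j)}-\beta_{0(j)}\|/\|\beta_{0(j)}\|$ via the Lipschitz estimate). The latter is exactly the estimate the paper obtains through its Hessian bound (Lemma~\ref{lm:Hessian}); the underlying analysis is the same, only the packaging differs.
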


For $v\in\R^{p_j}$, $\|v\|/\sqpj$ can be regarded as an average length of its components.
Using such averages serves as a normalization across groups of different sizes.
Since $p_{\max}/p_{\min}\asymp 1$, the $r_n^{-1}$ in \eqref{eq:defrn} is the convergence rate of the supremum of
the normalized $\ell_2$ errors for the active groups. 
As shown in the proof of Theorem~\ref{thm:asympknownvar} in Supplemental Material,
\begin{equation}\label{eq:rateboundsofrn}
\Omg(\lambda^{-1}({p_{\max}/q_0})^{1/2}) = r_n =O(\sqn).
\end{equation}
In Remark~\ref{rm:anorderbound} below, we will see that $r_n$ may achieve the
optimal rate of $\sqn$ under certain conditions.
The key conclusion \eqref{eq:DeltatoinP0} shows that the $\ell_2$ deviation
between $\sdelta$ and $\shdelta$ converges to zero. 
We do not assume that \eqref{eq:grplassodef} has a unique minimizer in Theorem~\ref{thm:asympknownvar}. However,
\eqref{eq:difftwohdelta} implies that any two minimizers follow the same asymptotic distribution
and thus there is no need to distinguish these minimizers when $n$ is large.

We briefly comment on some technical aspects in the proof of this result,
while leaving the detail to Supplemental Material.
The overall idea is to bound the difference in the group lasso loss function \eqref{eq:grplassodef}
when $y^*$ is used in place of $y$ and then translate this into
a bound on the deviation between $\sdelta$ and $\shdelta$, which are centered minimizers of the loss.
The challenge in the first step comes from the non-linearity of the regularizer and the high-dimensionality of 
the space. As a result, we cannot restrict our analysis to any finite-dimensional compact subset.
Although $\shdelta$ and $\sdelta$ satisfy the so-called cone condition which allows one to make use
of restricted eigenvalue assumptions on $\bfX$, the deviation $(\sdelta-\shdelta)$ does not necessarily
lie in a cone and is usually not sparse. This presents another technical challenge in the second step.


Now consider the implications of Theorem~\ref{thm:asympknownvar} for 
inference about a coefficient group. Theorem 5.1 in \cite{Lounici11} asserts that
\begin{equation}\label{eq:groupwisebound}
\sup_{j\in\N_J} \|\hbeta_{(j)}-\beta_{0(j)}\| =O_p(\lambda)
\end{equation}
under a generalized coherence condition for the group lasso setting. 
Let
\begin{equation}\label{eq:defan}
a_n=\lambda^{-1}(p_{\max})^{1/2}=\left\{{p_{\max}}/({p_{\max}\vee \log J})\right\}^{1/2}\sqn,
\end{equation}
$T_j={L_j(\hbeta_{(j)}-\beta_{0(j)})}$, and $T^*_j={L_j(\sbeta_{(j)}-\tdbeta_{(j)})}$,
where $L_j$ is a matrix for each $j$. Recall that $\nu[Z]$ is the distribution of $Z$.
We have the following result regarding group inference: 

\begin{corollary}\label{thm:knownvarlaw}
Consider the model \eqref{eq:model} with $\varepsilon \sim \dnorm_n(0,\sigma^2\bfI_n)$ 
and $p_{\max}/p_{\min}\asymp 1$.
Suppose that Assumptions \ref{as:design} to \ref{as:pluginest} are satisfied, and
that \eqref{eq:groupwisebound} holds for a suitable $\lambda \asymp \{(p_{\max} \vee \log J)/n\}^{1/2}$.  
Assume that the singular values of  $L_j$
are bounded from above by a positive constant uniformly for all $j\in\N_J$.
Then there are $\Delta^*_j\in\R^{p_j}$ such that
\begin{align}
&\nu[a_n T^*_j +  \Delta^*_j \mid \bfX, \tdbeta] =\nu[a_nT_j\mid \bfX]
\quad \text{ for all $j\in\N_J$}, \label{eq:Mnormweakconv} \\
&\Prob\left(\sup_{j\in\N_J}\left.\|\Delta_j^*\| > \epsilon \,\right| \bfX,\tdbeta\right)=o_p(1)
\end{align}
for every $\epsilon>0$. Furthermore, $a_n\asymp \sqn$ if $p_{\max}=\Omg(\log J)$.
\end{corollary}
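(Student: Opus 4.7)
The plan is to lift the $\ell_2$ approximation \eqref{eq:DeltatoinP0} between $\sdelta$ and $\shdelta$ established in Theorem~\ref{thm:asympknownvar} to the group statistics $T_j^*$, exploiting that those two vectors are coupled through a common bootstrap noise $\veps^*$. Writing $a_n T_j = (a_n/r_n) L_j \hdelta_{(j)}$ and $a_n T^*_j = (a_n/r_n) L_j \sdelta_{(j)}$, I would define
\begin{equation*}
\Delta^*_j \;:=\; \frac{a_n}{r_n}\, L_j\bigl(\shdelta_{(j)}-\sdelta_{(j)}\bigr),
\end{equation*}
so that by construction $a_n T^*_j + \Delta^*_j = (a_n/r_n)\,L_j\,\shdelta_{(j)}$. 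The task then splits into (i) identifying the conditional law of this right-hand side with that of $a_n T_j$, and (ii) bounding $\sup_j\|\Delta^*_j\|$.

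Step (i) is distributional bookkeeping. The intermediate estimator $\hbeta^*\in B(\bfX\beta_0+\veps^*;\lambda)$ depends on $\tdbeta$ only through $\veps^*$, which is drawn independently of $\tdbeta$ from $\dnorm_n(0,\sigma^2\bfI_n)$, the same law as the original noise $\veps$. Hence the conditional law of $\shdelta=r_n(\hbeta^*-\beta_0)$ given $(\bfX,\tdbeta)$ coincides with the conditional law of $\hdelta$ given $\bfX$, and the deterministic linear map $v\mapsto (a_n/r_n)L_j v_{(j)}$ carries this equality to \eqref{eq:Mnormweakconv}.

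Step (ii) uses operator norms. The assumed uniform bound on the top singular value of each $L_j$ gives a constant $C$ with
\begin{equation*}
\sup_{j\in\N_J}\|\Delta^*_j\|^2 \;\leq\; \sum_{j=1}^{J}\|\Delta^*_j\|^2 \;\leq\; C^2\Bigl(\frac{a_n}{r_n}\Bigr)^{\!2}\|\shdelta-\sdelta\|^2.
\end{equation*}
I would then verify $a_n/r_n=O(1)$ as follows: since $a_n=\sqpmax/\lambda$ and \eqref{eq:groupwisebound} makes $\sup_{j\in A_0}\|\hbeta_{(j)}-\beta_{0(j)}\|/\sqpmax = O_p(\lambda/\sqpmax)=O_p(a_n^{-1})$, the rate identification \eqref{eq:defrn} forces $r_n^{-1}=O(a_n^{-1})$, hence $r_n=\Omg(a_n)$. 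Plugging this into the display and invoking \eqref{eq:DeltatoinP0} yields the conditional-in-probability conclusion. The final clause is immediate from $a_n=\sqn\{p_{\max}/(p_{\max}\vee\log J)\}^{1/2}$: when $p_{\max}=\Omg(\log J)$, $p_{\max}\vee\log J\asymp p_{\max}$ so $a_n\asymp\sqn$.

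The main obstacle I anticipate is not conceptual but rather the verification $a_n/r_n=O(1)$: the hypothesis mixes the one-sided rate bound \eqref{eq:groupwisebound} over all groups with the two-sided rate identification \eqref{eq:defrn} only over active groups, so I need to check that the active-group sup rate is compatible with $a_n^{-1}$ and that the choice of $r_n$ from Theorem~\ref{thm:asympknownvar} is compatible with the scaling $a_n$ in the corollary. Everything else is a mechanical transfer of the coupling conclusion of Theorem~\ref{thm:asympknownvar}.
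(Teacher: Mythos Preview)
Your proposal is correct and follows essentially the same route as the paper's proof: define $\Delta^*_j=a_n(\tilde T_j-T^*_j)$ with $\tilde T_j=L_j(\hbeta^*_{(j)}-\beta_{0(j)})$, use $\nu[\veps^*]=\nu[\veps]$ for the distributional identity, bound $\sup_j\|\Delta^*_j\|$ via the uniform operator-norm control on $L_j$ together with $a_n=O(r_n)$, and invoke \eqref{eq:DeltatoinP0}. Your verification of $a_n=O(r_n)$ from \eqref{eq:groupwisebound} and \eqref{eq:defrn} is exactly the argument the paper uses (stated there in one line), and your derivation of $a_n\asymp\sqn$ when $p_{\max}=\Omg(\log J)$ matches as well.
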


The above result applies directly to simultaneous inference about all groups, each of a possibly unbounded size,
based on the bootstrap distributions of $\|T^*_j\|$.
Under mild assumptions such as those in Proposition~\ref{prop:GaussianC1} below, 
the singular values of $\bfX_{(j)}/\sqn$ are uniformly bounded between two positive constants with high probability.
Thus, this corollary validates theoretically our bootstrap inference using the
function $f_j$ \eqref{eq:inferfunction}. In fact, one may use many other matrices to carry out the inference.
Moreover, the explicit
rate of $a_n$ is irrelevant to the practical implementation of our method.

\begin{remark}\label{rm:anorderbound} 
The order of $a_n$ in \eqref{eq:defan} shows that the radius of a confidence region constructed
according to \eqref{eq:Mnormweakconv} 
is wider than the optimal rate by no more than a factor of $1\vee{(\log J/p_{\max})^{1/2}}$.
It should be clarified that this suboptimality is caused by the intrinsic bias of 
the group lasso, instead of the bootstrap procedure. When the order of the group size 
is comparable to or larger than $\log J$,
however, the rate may become optimal with $a_n\asymp\sqn$.
This is not possible for the lasso with $p_{\max}=1$ and $J=p\gg n\to \infty$, which demonstrates
another advantage of grouping a large number of coefficients
for inference with the group lasso. Comparing \eqref{eq:defrn} to \eqref{eq:groupwisebound} 
shows that $r_n=\Omg(a_n)$ in general. 
As a result, when $a_n$ becomes optimal we also have $r_n\asymp \sqn$.
\end{remark}

\subsection{Justification for point estimates}\label{sec:parametertheoretical}

To complete our validation of the proposed bootstrap inference in Section~\ref{sec:method} 
assuming $\sigma$ is known, it remains to verify that the thresholded group lasso $\tdbeta$ 
satisfies Assumption~\ref{as:pluginest}:

\begin{proposition}\label{prop:bth}
Consider the model \eqref{eq:model} with $\varepsilon \sim \dnorm_n(0,\sigma^2\bfI_n)$ 
and $p_{\max}/p_{\min}\asymp 1$. Suppose that Assumptions \ref{as:design} and \ref{as:betasignal} hold.
Choose a suitable $\lambda \asymp \{(p_{\max} \vee \log J)/n\}^{1/2}$. 
Define $\tdbeta$ by \eqref{eq:thresholdhbeta} and assume that 
\begin{equation}\label{eq:orderofbth}
\lmd s_0  \gg b_{\supth} \gg  \lambda(q_0)^{1/2}.
\end{equation} 
Then $\Prob\{\tdbeta\in\scrB(M_1)\} \to 1$ for some $M_1<\infty$.
\end{proposition}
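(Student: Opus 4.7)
The plan is to verify directly that with high probability, the thresholded estimator $\tdbeta$ satisfies both conditions defining $\scrB(M_1)$: (i) $G(\tdbeta)\subset S_1$, and (ii) $\|\tdbeta_{(j)}-\beta_{0(j)}\|\leq M_1\sqpj/r_n$ for every $j\in A_0$. The starting point is the standard oracle inequality for the group lasso, which under Assumption~\ref{as:design} with the prescribed $\lambda\asymp\{(p_{\max}\vee\log J)/n\}^{1/2}$ yields $\|\hbeta-\beta_0\|=O_P(\lambda\sqrt{q_0})$. This single bound controls every $\|\hbeta_{(j)}-\beta_{0(j)}\|$ uniformly and drives the rest of the argument.

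First I would deal with the support property $G(\tdbeta)\subset S_1$ by examining each group type:
\begin{itemize}
\item For $j\in A_0^c$, $\|\hbeta_{(j)}\|\leq \|\hbeta-\beta_0\|=O_P(\lambda\sqrt{q_0})$, which is $o(b_{\supth})$ by the lower condition in \eqref{eq:orderofbth}, so such groups are thresholded to zero.
\item For $j\in S_2$, use $\|\hbeta_{(j)}\|\leq \|\hbeta_{(j)}-\beta_{0(j)}\|+\|\beta_{0(j)}\|$. The first term is $O_P(\lambda\sqrt{q_0})$; for the second, the weak-signal bound gives $\|\beta_{0(j)}\|\ll \sqpj/(b_n\sqn)$, and using $p_{\max}/p_{\min}\asymp 1$ together with $b_n\geq s_0$ one checks this is dominated by $\lambda\sqrt{q_0}$, hence again $o(b_{\supth})$.
\item For $j\in S_1$, $\|\hbeta_{(j)}\|\geq \|\beta_{0(j)}\|-\|\hbeta_{(j)}-\beta_{0(j)}\|$. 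The key calculation is that the strong-signal condition \eqref{eq:separablebeta} combined with the definition of $b_n$ and $p_{\max}/p_{\min}\asymp 1$ gives $b_n\sqpj/\sqn\asymp s_0\lambda$, so $\|\beta_{0(j)}\|\gg s_0\lambda$, which dominates $b_{\supth}$ by the upper condition in \eqref{eq:orderofbth}. Thus $\|\hbeta_{(j)}\|>b_{\supth}$ with high probability and $\tdbeta_{(j)}=\hbeta_{(j)}$.
\end{itemize}

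Next I would verify the deviation bound. From \eqref{eq:rateboundsofrn} we have $r_n=\Omega(\lambda^{-1}\sqrt{p_{\max}/q_0})$, so $\sqpj/r_n=O(\lambda\sqrt{q_0 p_j/p_{\max}})\asymp \lambda\sqrt{q_0}$. For $j\in S_1$, $\tdbeta_{(j)}=\hbeta_{(j)}$ so $\|\tdbeta_{(j)}-\beta_{0(j)}\|=O_P(\lambda\sqrt{q_0})\leq M_1\sqpj/r_n$ for a suitable constant $M_1$. For $j\in S_2$, the analysis of step one shows $\tdbeta_{(j)}=0$ with high probability, so $\|\tdbeta_{(j)}-\beta_{0(j)}\|=\|\beta_{0(j)}\|\ll \sqpj/(b_n\sqn)$, and since $r_n=O(\sqn)$ and $b_n\geq 1$ this is also $o(\sqpj/r_n)$. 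Combining the two cases, $r_n(\tdbeta-\beta_0)\in \scrD(M_1)$ with high probability, and together with $G(\tdbeta)\subset S_1$ this gives the claim.

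The main obstacle is the bookkeeping of rates across the five quantities $\lambda$, $b_n$, $s_0\lambda$, $b_{\supth}$ and $\sqpj/r_n$, and making sure the strong-weak split in Assumption~\ref{as:betasignal} exactly bridges the gap $\lambda\sqrt{q_0}\ll b_{\supth}\ll \lambda s_0$. In particular, the crucial algebraic identity $b_n\sqpj/\sqn\asymp s_0\lambda$, obtained by case analysis on whether $p_{\max}\geq\log J$ or $p_{\max}<\log J$, is what allows the strong signals to exceed the threshold while the weak signals stay below it; without the specific form of $b_n$ in Assumption~\ref{as:betasignal} this separation would fail.
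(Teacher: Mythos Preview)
Your overall strategy matches the paper's: invoke the $\ell_2$ oracle bound $\|\hbeta-\beta_0\|=O_P(\lambda\sqrt{q_0})$ (which is $\tau$ in \eqref{eq:boundL2}), compare it to $b_{\supth}$ and to the strong/weak signal sizes in \eqref{eq:separablebeta}, and conclude $G(\tdbeta)=S_1$ with high probability; then handle the deviation bound separately for $S_1$ and $S_2$. The support argument is fine.

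There is, however, a genuine gap in your deviation bound for $j\in S_1$. From \eqref{eq:rateboundsofrn} you correctly deduce $r_n=\Omega(\lambda^{-1}\sqrt{p_{\max}/q_0})$, but this yields only the \emph{upper} bound $\sqrt{p_j}/r_n=O(\lambda\sqrt{q_0})$, not the two-sided relation $\asymp$ you write. To conclude $\|\hbeta_{(j)}-\beta_{0(j)}\|\leq M_1\sqrt{p_j}/r_n$ from $\|\hbeta_{(j)}-\beta_{0(j)}\|=O_P(\lambda\sqrt{q_0})$ you would need the \emph{lower} bound $\sqrt{p_j}/r_n=\Omega(\lambda\sqrt{q_0})$, equivalently $r_n=O(\lambda^{-1}\sqrt{p_{\max}/q_0})$. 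The only upper bound on $r_n$ available from \eqref{eq:rateboundsofrn} is $r_n=O(\sqrt n)$, and $\sqrt n$ can be of strictly larger order than $\lambda^{-1}\sqrt{p_{\max}/q_0}$ (e.g.\ whenever $q_0\to\infty$ or $p_{\max}=o(\log J)$). So the step ``$O_P(\lambda\sqrt{q_0})\leq M_1\sqrt{p_j}/r_n$'' is not justified.

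The fix is immediate and is what the paper does: use the \emph{definition} \eqref{eq:defrn} of $r_n$ directly rather than the derived rate bounds. By \eqref{eq:defrn},
\[
r_n\sup_{j\in A_0}\|\hbeta_{(j)}-\beta_{0(j)}\|\asymp_P\sqrt{p_{\max}},
\]
so for every $j\in S_1\subset A_0$ one has $r_n\|\tdbeta_{(j)}-\beta_{0(j)}\|=r_n\|\hbeta_{(j)}-\beta_{0(j)}\|=O_P(\sqrt{p_{\max}})=O_P(\sqrt{p_j})$ since $p_{\max}/p_{\min}\asymp 1$. This bypasses the $\lambda\sqrt{q_0}$ detour entirely. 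Your treatment of $j\in S_2$ via $\|\beta_{0(j)}\|\ll\sqrt{p_j}/(b_n\sqrt n)$ together with $r_n=O(\sqrt n)$ is correct as written.
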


Under Assumption~\ref{as:betasignal} with $S_2=\varnothing$,
the condition that $\Prob(\tdbeta\in\scrB(M_1)) \to 1$ requires $\tdbeta$ to be model selection consistent
and to have a certain rate of convergence to $\beta_0$. Of the two, model selection consistency is
the key, since conditional on $G(\tdbeta)=A_0$ and $s_0\ll \sqn$ \eqref{eq:sparsescaling}, one can always apply 
the ordinary least-squares method to reestimate the active coefficients of $\beta_0$, which will satisfy
the convergence rate requirement. Here,
we mention a few other methods for constructing $\tdbeta$, which are model selection consistent.
The first method is the adaptive group lasso \citep{WeiHuang10}, a natural generalization
of the adaptive lasso \citep{Zou06}, 
which minimizes \eqref{eq:grplassodef} with weights $w_j$ defined by an initial estimator.
If we choose the group lasso as the initial estimator, Corollary 3.1 in \cite{WeiHuang10}
asserts that the adaptive group lasso is model selection consistent, 
while allowing $J$, $q_0$, and $p_{\max}$ to grow with $n$.
The second choice is to use a concave penalty, such as the MCP \citep{Zhang10}.
By Theorem 4.2 and Corollary 4.2
in \cite{Huang12}, a penalized estimator under the group MCP 
enjoys the oracle property, achieving model selection consistency
and the optimal rate in estimating active coefficients.
Other possible methods may include stability selection \citep{Meinshausen10} and 
the sample splitting approach of \cite{Wasserman09}.

When the noise variance is unknown, we plug in an estimate $\hsigma$ in bootstrap sampling.
To establish similar results as in Theorem~\ref{thm:asympknownvar} and Corollary~\ref{thm:knownvarlaw}, 
it suffices that
\begin{equation}\label{eq:asymcondhsigma}
q_0 (p_{\max} \vee \log J) \left\{(\hsigma/\sigma) \vee (\sigma/\hsigma) - 1 \right\}=o_p(1).
\end{equation}
See Supplemental Material for the precise statement.
This essentially requires $\hsigma$ converge to $\sigma$ faster than the rate
of $\{q_0 (p_{\max} \vee \log J)\}^{-1}$.
If $\hsigma$ is $\sqn$-consistent, then for \eqref{eq:asymcondhsigma} to hold it is sufficient to have
${q_0 (p_{\max} \vee \log J)}\ll{\sqn}$,
which does not impose any additional assumption on the scaling among $(n,J,q_0,p_{\max})$
beyond the one in \eqref{eq:sparsescaling}.
There are a few possible approaches that can achieve this desirable convergence rate. 
One may employ a two-stage approach, which selects a model $\hat{M}$ in the first stage
and then estimates the error variance by ordinary least-squares using only the variables in $\hat{M}$.
For our bootstrap approach, we let $\hat{M}=\calG_{A}$, where $A=G(\tdbeta)$ is the set of active groups 
of the thresholded group lasso $\tdbeta$ defined in \eqref{eq:thresholdhbeta}. 
For a proper choice of the cutoff value $b_{\supth}$ \eqref{eq:orderofbth},
$\hat{M}=\calG_{S_1}$ with high probability, selecting the strong coefficients consistently.
Then the two-step approach leads to a $\sqn$-consistent estimator $\hsigma$, 
because under Assumption~\ref{as:betasignal}, $\frn\|\bfX_{(S_2)}\beta_{0(S_2)}\|^2=o_p(n^{-1/2})$
does not affect the convergence rate of $\hsigma$. 
One can also consider a model selection procedure with a sure screening property \citep{FanLv08}, that is,
$\calG_{A_0} \subset \hat{M}$ with probability tending to one. \cite{Fan12} have shown that
$\hsigma$ constructed by the two-stage approach can be $\sqn$-consistent if 
$|\hat{M}|(\log p)/n=o_p(1)$. The authors also propose a refitted cross-validation estimator of $\sigma^2$
which only requires  $|\hat{M}|=o_p(n)$.
The lasso estimator satisfies the sure screening property 
under a suitable beta-min condition and may be used as the model selection procedure in the above two methods
for variance estimation. 
It is also possible to use $\hsigma$ with a different convergence rate, such as
the estimators in the scaled lasso \citep{SunZhang12} and the scaled group lasso \citep{MitraZhang14}.
See these references for the exact convergence rate of $\hsigma$, which may impose
a different scaling among $(n,J,q_0,p_{\max})$ for \eqref{eq:asymcondhsigma} to hold.

\subsection{Comparison to other methods}\label{sec:theorycomparison}

We discuss the main assumptions and conclusions of our asymptotic results,
in comparison with other competing methods.

Assumption~\ref{as:design}, imposed on the design matrix $\bfX$, is quite mild and holds for random Gaussian designs:
\begin{proposition}\label{prop:GaussianC1}
Assume each row of $\bfX$ is drawn independently from $\dnorm_p(0,\bmSigma)$ 
with covariance matrix $\bmSigma=\bmSigma(n)$.
Then Assumption \ref{as:design} holds if $(p_{\max}\vee\log J)/n\to 0$ and $\Lambda_k(\bmSigma)\in[c_*,c^*]$, with $0<c_*< c^*<\infty$, for all $k\in\N_p$.
\end{proposition}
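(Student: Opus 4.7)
The plan is to verify the two pieces of Assumption~\ref{as:design} separately using standard Gaussian concentration tools together with the decomposition $\bfX=\bfZ\bmSigma^{1/2}$, where $\bfZ\in\R^{n\times p}$ has i.i.d.\ $\dnorm(0,1)$ entries. The eigenvalue bounds $\Lambda_k(\bmSigma)\in[c_*,c^*]$ will be absorbed into the universal constants $\bar c,\kappa_1,\kappa_2$.

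For the blockwise operator-norm bound, fix $j$ and note that $\bfC_{(jj)}=\frn\bfX_{(j)}^{\trans}\bfX_{(j)}$ has $\E\bfC_{(jj)}=\bmSigma_{(jj)}$ with $\Lambda_{\max}(\bmSigma_{(jj)})\leq c^*$. The Davidson--Szarek non-asymptotic bound on the largest singular value of a Gaussian matrix yields
\begin{equation*}
\Prob\!\left(\Lambda_{\max}(\bfC_{(jj)})\geq c^*\bigl(1+\sqrt{p_j/n}+t/\sqn\bigr)^2\right)\leq 2\exp(-t^2/2).
\end{equation*}
Choosing $t\asymp\sqrt{\log J}$ and taking a union bound over $j\in\N_J$, combined with $(p_{\max}\vee\log J)/n\to 0$, produces a finite $\bar c$ with $\max_j\Lambda_{\max}(\bfC_{(jj)})\leq\bar c$ with probability tending to one.

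For the restricted strong convexity inequality \eqref{eq:groupstrconvexmain}, I would first establish the linearized form
\begin{equation*}
\frac{1}{\sqn}\|\bfX\Delta\|\geq \tilde\kappa_1\|\Delta\|-\tilde\kappa_2\,\rho(\alpha^*)\|\Delta\|_{\calG,\alpha}\quad\text{for all }\Delta\in\R^p
\end{equation*}
with high probability, and then square and rearrange, splitting on the sign of the right side and using the elementary inequality $(a-b)^2\geq a^2/2-b^2$, to recover \eqref{eq:groupstrconvexmain} with $\kappa_1=\tilde\kappa_1^2/2$ and $\kappa_2=\tilde\kappa_2^2$. The key calculation is that the dual of $\|\cdot\|_{\calG,\alpha}$ is $\|v\|_{\calG,\alpha}^{*}=\max_j\|v_{(j)}\|_{\alpha^*}$, so for $g\sim\dnorm_p(0,\bfI_p)$ one has
\begin{equation*}
\E\sup_{\|\Delta\|_{\calG,\alpha}\leq 1}\langle g,\Delta\rangle=\E\max_{j\in\N_J}\|g_{(j)}\|_{\alpha^*}=\sqn\,\rho(\alpha^*),
\end{equation*}
identifying $\rho(\alpha^*)$ with the Gaussian complexity of the unit $\|\cdot\|_{\calG,\alpha}$-ball. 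A Gordon--Chevet type comparison applied to the Gaussian process $\Delta\mapsto\|\bfX\Delta\|$ on the unit Euclidean sphere, followed by a peeling argument over geometrically spaced values of the ratio $\|\Delta\|_{\calG,\alpha}/\|\Delta\|$, then yields the linearized inequality with constants depending only on $c_*$ and $c^*$.

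The main obstacle I foresee is carrying out the peeling step uniformly in $\Delta$ while preserving the sharp dependence on $\rho(\alpha^*)$, in particular when $\alpha$ is large and the dual norm behaves like an $\ell_1$-aggregate. However, this is exactly the content of the restricted strong convexity machinery developed in \cite{Negahban12} for decomposable regularizers, of which the group norm $\|\cdot\|_{\calG,\alpha}$ is a canonical instance. The proof should therefore reduce to checking that $\rho(\alpha^*)$ plays the role of the complexity measure required there and invoking the corresponding result, with constants adjusted for the eigenvalue bounds on $\bmSigma$.
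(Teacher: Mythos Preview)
Your proposal is correct and takes essentially the same approach as the paper. The paper simply invokes the two results you are sketching: for the restricted strong convexity inequality it cites Proposition~1 of \cite{Negahban12} (stated as Lemma~\ref{lm:NegahbanProp1}) and then checks, exactly as you do, that the eigenvalue hypothesis $\Lambda_k(\bmSigma)\in[c_*,c^*]$ lets one replace the $\bmSigma$-dependent constants $\kappa_1(\bmSigma)=\tfrac14\Lambda_{\min}(\bmSigma^{1/2})$, $\kappa_2(\bmSigma)\leq 9\Lambda_{\max}(\bmSigma^{1/2})$ by universal ones; for the blockwise eigenvalue bound it cites the Gaussian operator-norm concentration from \cite{ZhangHuang08} in place of your Davidson--Szarek reference, but the inequality and union-bound argument are identical to yours.
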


\noindent
The random Gaussian design, often referred to as the $\bmSigma$-{\em Gaussian ensemble}, 
is a very common model used in high-dimensional inference.
Particularly, the de-biased lasso methods 
either assume the same model or use it to verify the regularity conditions on $\bfX$. 
The scaling among $(n,J,q_0,p_{\max})$ in \eqref{eq:sparsescaling} justifies 
the application of our method in a high-dimensional setting allowing $J\gg n \to \infty$ and $q_0,p_{\max}\to\infty$. 
If restricted to the special case of the lasso ($p_j=1$ for all $j$), 
\eqref{eq:sparsescaling} requires $s_0 \log p \ll \sqn$, 
which turns out to be the same scaling assumed in
the aforementioned de-biased lasso methods.
We have assumed in \eqref{eq:separablebeta} that the true coefficients of the active groups $A_0$ can
be separated into two subsets. The subset $S_1$ contains strong signals under
a beta-min condition on $\|\beta_{0(j)}\|$, 
only allowing its normalized $\ell_2$ norm to decay at a certain rate, while the subset $S_2$
includes small coefficients. This is a weaker assumption than 
the usual beta-min condition imposed on all nonzero components of $\beta_0$.
Suppose that $p_{\max}=\Omg(\log J)$ and $s_0\asymp n^{a/2}$ for $a\in[0,1)$.
Then the normalized $\ell_2$ norm of $\beta_{0(j)}$ may be of any order outside the 
interval $[n^{-(1+a)/2},n^{-(1-a)/2}]$. When $a$ is sufficiently small, we almost
remove the beta-min condition in the sense that the normalized $\ell_2$ norm of an active group 
can be of any order except $n^{-1/2}$. 
To apply Theorem~\ref{thm:asympknownvar} to the lasso, the beta-min condition on large signals becomes
$\inf_{S_1}|\beta_{0k}|\gg s_0(\log p /n)^{1/2}$. It is stronger than the minimum signal strength $(\log p /n)^{1/2}$
for variable selection consistency. This is because we did not assume any irrepresentable condition on $\bfX$
or require the point estimator $\tdbeta$ have an optimal convergence rate.
By Assumption~\ref{as:pluginest} it is sufficient for $\tdbeta$ to have the same suboptimal 
rate as the group lasso $\hbeta$ and to include only the strong signal groups. These are quite reasonable assumptions
satisfied by a simple thresholded group lasso.
Nevertheless, the signal strength assumption \eqref{eq:separablebeta} is relatively strong,
since an ideal inference method should be valid for any parameter value. It will be an important future 
contribution to develop bootstrap inference methods for high-dimensional data without such assumptions.

The de-biased lasso estimator $\hat{b}$, 
constructed with a relaxed inverse $\hTheta$ of the Gram matrix $\bfC$, is asymptotically unbiased and
can be expressed as 
\begin{equation}\label{eq:dsest}
\sqn(\hat{b}-\beta_0)=W + \Delta,
\end{equation} 
where $W$ is a Gaussian random vector and the bias term 
$\|\Delta\|_{\infty}=o_p(1)$. 
This result may not be directly applicable to group inference about $\beta_{0G}$ when $|G|\to\infty$.
To perform group inference, \cite{MitraZhang14} propose to
de-bias the group lasso with a relaxed projection matrix $P_G$ for each group $G$.
The authors establish that finding $P_G$ is feasible for certain sub-Gaussian designs 
with high probability, leaving the possibility of failing to find a suitable $P_G$ when $n$ is finite. 
These methods do not rely on any beta-min condition, 
which is an advantage over our approach. However, the importance of our theoretical results
are seen as follows. First, our results answer the fundamental question about
quantifying the uncertainty in the group lasso $\hbeta$, instead of a particular modification or function
of $\hbeta$ such as the de-biased estimators. In this sense, the two approaches
are not directly comparable. Second, \eqref{eq:DeltatoinP0} is a stronger result
that bounds the total $\ell_2$ deviation over all groups.
The uniform convergence in Corollary~\ref{thm:knownvarlaw},
not established for the de-biased group lasso,
provides the theoretical foundation for simultaneous inference on a large number of groups.

\myrev{
\cite{Chatterjee13} establish in their Theorem 5.1 the consistency of a residual bootstrap for the adaptive lasso \citep{Zou06}, which can be defined via \eqref{eq:grplassodef} with $p_j=1$ and $w_j$ specified by an initial
estimator $\beta^{\dag}$. Their beta-min condition, $\inf_{A_0}|\beta_{0j}|>K$ for some $K\in(0,\infty)$,
is obviously stronger than our Assumption~\ref{as:betasignal}. 
They require $\beta^{\dag}$ to be $\sqn$-consistent and to satisfy some form of deviation
bound, which are much more restrictive than our Assumption~\ref{as:pluginest} on the initial point estimator.
There are also a number of technical assumptions on the design matrix in \cite{Chatterjee13}.
It is not clear whether the random Gaussian design in Proposition~\ref{prop:GaussianC1} satisfies these assumptions.
On the other hand, due to the use of the adaptive lasso, their confidence intervals
will have lengths of the optimal rate $n^{-1/2}$, while our method applied to the lasso will construct
wider intervals asymptotically, as discussed in Remark~\ref{rm:anorderbound}.
}

\section{Numerical results}\label{sec:numerical}

\subsection{Methods and simulated data}\label{sec:simulationsettings}

To evaluate its finite-sample performance, we applied our parametric bootstrap method
on simulated and real data sets. 
For each data set, we obtain a solution path of the group lasso 
using the \texttt{R} package \texttt{grpreg} \citep{BrehenyHuang15}
and choose the tuning parameter $\lmd$ by cross-validation.
Let $\hbeta\in B(y;\hlmd)$ denote the solution for the chosen $\hlmd$ and $q=|G(\hbeta)|$
be the number of active groups of $\hbeta$. In light of \eqref{eq:orderofbth},
we set the threshold value $b_{\supth}=\frac{1}{2} \hlmd (q p_{\max})^{1/2}$ to obtain $\tdbeta$. 
When $\tdbeta$ has $n$ or more nonzero components,
we keep only its largest $\lfloor n/p_{\max}\rfloor-1$ groups in terms of $\ell_2$ norm. 
Then the active coefficients of $\tdbeta$ are re-computed via least-squares, and
the noise variance is estimated by the residual. Given $(\tdbeta,\hsigma)$,
we draw $N=300$ bootstrap samples of $\sbeta\in B(y^*;\hlmd)$ 
to make inference following the procedure described in Section~\ref{sec:pbinference}.

We compare our method with competitors, including 
two methods of the de-biased lasso approach \citep{Javanmard13b,vandeGeer13} 
and the group-bound method \citep{Meinshausen15},
implemented in the \texttt{R} package \texttt{hdi} \citep{Dezeure15} and \texttt{R} function \texttt{SSlasso}.
To distinguish from the de-biased lasso method of \cite{Javanmard13b}, we will
call the method of \cite{vandeGeer13} the de-sparsified lasso. 
The \texttt{hdi} package allows the user to input an estimate of the noise variance
for the de-sparsified lasso, for which we use the
same estimate in our approach to make results more comparable.
Other tuning parameters are chosen via the default methods in their respective implementation.

The rows of $\bfX$ are independent draws from
$\dnorm_p(0,\bmSigma)$, with $\bmSigma$ chosen from the following
two designs: (i) Toeplitz, $\bmSigma_{jk}=0.5^{|j-k|}$; (ii) Exponential decay, $(\bmSigma^{-1})_{jk}=0.4^{|j-k|}$.
Recall that $s_0$ denotes the number of active coefficients. 
We adopt two distinct ways to assign active coefficients: (1) Set the first $s_0$ coefficients,
$\beta_{0k}$, $k=1,\ldots,s_0$, to be nonzero; (2) the active coefficients are evenly spaced
in $\N_p$. Since neighboring $X_j$'s are highly correlated in both designs, the two different ways of assigning
active coefficients lead to distinct correlation patterns among the true predictors and between the true and false
predictors. Index the two designs by $d\in\{\text{i, ii}\}$ 
and the two ways of assigning active coefficients by $a\in\{1,2\}$.
Given $\bfX$ and $\beta_0$, the response $y$ is simulated from $\dnorm_n(\bfX\beta_0,\bfI_n)$.
In the first simulation study described in Sections~\ref{sec:numericalgroup} and \ref{sec:individual},
we fixed $s_0=10$ and drew $\beta_{0k} \sim \text{Unif}(-1,1)$ for $k\in \calG_{A_0}$.
We chose  $(n,p) \in \{(100,200),(100,400)\}$.
The combination of above choices created eight different data generation settings. 
In each setting, we generated $K=20$ data sets, i.e. $K$ independent realizations of $(y,\bfX,\beta_0)$.

\subsection{Group inference}\label{sec:numericalgroup}

We first examine the performance in group inference. The predictors were partitioned into groups
of size $p_j=10$ by two different methods. In the first method, we group the $10$ active
coefficients into one group and the other zero coefficients into the remaining groups, in which case
there is only one active group. In the second way of grouping, there are two active groups,
each containing five nonzero coefficients and five zero coefficients. We will denote these two ways of grouping
by $\calP_1$ and $\calP_2$. Clearly, the signal strength of the active groups in $\calP_2$ is weaker.

Our bootstrap method, the de-sparsified lasso, and the group-bound method 
were used to test the hypothesis $H_{0,j}:\beta_{0(j)}=0$ for each group.
The de-sparsified lasso method outputs a p-value 
$\xi_k$ for each individual test $\beta_{0k}=0$ for $k\in \N_p$. If $k\in \calG_j$, we adjust the p-value 
by Bonferroni correction
with the group size to obtain $\xi_{\text{adj},k}=\xi_k p_j$. Then the hypothesis $H_{0,j}$
will be rejected at level $\siglevel$ if $\min_{k\in\calG_j} \xi_{\text{adj},k} \leq \siglevel$.
For each $j\in\N_J$, the group-bound method constructs a lower bound
for $\|\beta_{0(j)}\|_1$ to test the hypothesis $H_{0,j}$ at level $\gamma$.
We chose $\siglevel=0.05$ and recorded the numbers of rejections among the active and the zero groups,
denoted by $m_A$ and $m_I$, respectively. Then for each method, we calculated the power
$\text{PWR}=m_A/q_0$ and the type-I error rate, i.e. false positive rate, $\text{FPR}=m_I/(J-q_0)$.
Our method can build a confidence region for each group \eqref{eq:regionest}, and we recorded
the coverage rate $r_A$ for the active groups. Note that the coverage rate for the zero groups $r_I=1-\text{FPR}$.
The other two competing methods do not construct confidence regions for a group of coefficients.
The average result over the $K$ data sets in each data generation setting is
reported in Table~\ref{tab:groupsim}. 

\begin{table}[t]
\caption{Coverage, power and false positive rate (\%) in group inference\label{tab:groupsim}}
\begin{center}
 \begin{tabular}{@{\extracolsep{4pt}}cccrrrrrrr@{}}
  \hline  \hline
  \multicolumn{3}{c}{Data Setting} & \multicolumn{3}{c}{bootstrap}  &  \multicolumn{2}{c}{de-sparsified} 
  & \multicolumn{2}{c}{group-bound}\\ 
  \cline{1-3}\cline{4-6}\cline{7-8}\cline{9-10}
 $(n,p)$ & $(a,d)$ & $\mathcal{G}$ & $r_A$ & PWR  & FPR &  PWR  & FPR &  PWR  & FPR \\   \hline
\multirow{8}{*}{$(100,200)$}		
		&\multirow{2}{*}{(1, i)} 	& $\calP_1$ & 95.0 	& 95.0 	& 5.5 & 100.0 	& 44.2 & 0.0 & 0.0 \\
		&					& $\calP_2$ & 92.5 	& 67.5 	& 5.3 & 97.5 	& 48.9 & 0.0 & 0.0 \\ 
		&\multirow{2}{*}{(1, ii)}	& $\calP_1$ & 95.0 	& 100.0	& 5.0 & 100.0 	& 50.3 & 0.0 & 0.0 \\ 
		&					& $\calP_2$ & 90.0 	& 97.5 	& 3.6 & 100.0 	& 54.4 & 0.0 & 0.0 \\ 
		&\multirow{2}{*}{(2, i)} 	& $\calP_1$ & 100.0	& 100.0 	& 5.3 & 100.0 	& 53.4 & 0.0 & 0.0 \\
		&					& $\calP_2$ & 90.0 	& 85.0 	& 4.7 & 100.0 	& 52.5 & 0.0 & 0.0 \\
		&\multirow{2}{*}{(2, ii)}	& $\calP_1$ & 100.0	& 100.0 	& 4.2 & 100.0 	& 61.8 & 0.0 & 0.0 \\
		&					& $\calP_2$ & 100.0	& 95.0 	& 4.7 & 100.0 	& 61.7 & 0.0 & 0.0 \\
 \hline
\multirow{8}{*}{$(100,400)$}		
		&\multirow{2}{*}{(1, i)} 	& $\calP_1$ & 95.0 	& 100.0 	& 4.9 & 100.0 & 42.6 & 0.0 & 0.0 \\
		&					& $\calP_2$ & 85.0 	& 75.0 	& 3.2 & 100.0 & 45.4 & 0.0 & 0.0 \\ 
		&\multirow{2}{*}{(1, ii)}	& $\calP_1$ & 90.0 	& 100.0 	& 4.4 & 100.0 & 64.2 & 0.0 & 0.0 \\ 
		&					& $\calP_2$ & 85.0 	& 87.5 	& 6.2 & 97.5 & 61.4 & 0.0 & 0.0 \\ 
		&\multirow{2}{*}{(2, i)} 	& $\calP_1$ & 90.0 	& 100.0 	& 4.0 & 100.0 & 65.8 & 0.0 & 0.0 \\
		&					& $\calP_2$ & 85.0 	& 67.5 	& 4.3 & 100.0 & 64.7 & 0.0 & 0.0 \\
		&\multirow{2}{*}{(2, ii)}	& $\calP_1$ & 100.0 	& 100.0 	& 4.0 & 100.0 & 74.6 & 0.0 & 0.0 \\
		&					& $\calP_2$ & 82.5	 	& 90.0 	& 3.0 & 100.0 & 66.6 & 0.0 & 0.0 \\
 \hline
\end{tabular}
\end{center}
{$r_A$: coverage rate of active groups; PWR: power; FPR: false positive rate.}	
\end{table}

The big picture of this comparison is very clear. Our bootstrap method shows a very satisfactory control of 
type-I errors, around the nominal level of $5\%$ for all cases, 
while its coverage rate for active groups is $>0.9$ with power $>0.8$ for a strong majority. 
In contrast, the de-sparsified lasso method is too optimistic 
with very high type-I error rates, ranging between $40\%$ and $70\%$,
and the group-bound approach is extremely conservative, resulting in no false rejections but having little power at all.
Although the bias term $\Delta$ \eqref{eq:dsest} can be far from negligible 
when $n$ is finite, the de-sparsified lasso method totally ignores this term, 
and as a result its confidence intervals are often too narrow and its p-values become severely underestimated.
On the contrary, our approach takes care of the bias in the group lasso via
simulation instead of asymptotic approximation, 
which turns out to be very important for finite samples as suggested by the comparison.
This is one of the reasons for the observed better performance of our method in Table~\ref{tab:groupsim}.
Another reason is our explicit use of the group lasso so that group structures are utilized in
both the estimation of $\tdbeta$ and the bootstrap simulation.
These points will be further confirmed in our subsequent comparison on inference for individual coefficients.

The group-bound method is by nature a conservative approach, testing the null hypothesis $\beta_{0G}=0$ for $G\subset \N_p$ with a lower-bound of the $\ell_1$
norm $\|\beta_{0G}\|_1$. By design, the type-I error is controlled simultaneously for all groups $G\subset\N_p$ at the significance level $\gamma$ even if one specifies
a particular group, like what we did in this comparison. It suffers from low power, 
especially when the group $G$ does not
include all the covariates that are highly correlated with the true variables. 
To verify this observation, we did more test on the data sets of size $(n,p)=(100,200)$. 
For the Toeplitz design 
with the first $10$ coefficients being active, its power stayed close to zero until we included the first 100
variables in the group $G$ and increased to 0.86 when $G=\{1,\ldots,p\}$ including all variables. 
We then increased the signal strength
by simulating active coefficients $\beta_{0k}\sim\text{Unif}(-3,3)$. In this case, the power of the group-bound method
increased to 0.45 for $G=\{1,\ldots,10\}$ and to 0.52 for $G=\{1,\ldots,50\}$, which are still substantially lower than the power of our bootstrap method under weaker signals; see the first row in Table~\ref{tab:groupsim}.
This numerical comparison demonstrates the advantage of our method in presence of between-group correlations, while the group-bound method might be more appropriate when groups are defined by
clustering highly correlated variables together. 
Since its target application is different, we exclude
the group-bound method from the following comparisons. 


\subsection{Individual inference}\label{sec:individual}

Since the de-sparsified lasso was designed without considering variable grouping,
we conducted another set of comparisons on inference about individual coefficients.
We included the de-biased lasso in these comparisons as well.
For our bootstrap method, we completely ignore any group structure and set $p_j=1$ for all $j$
throughout all the steps in our implementation. Under this setting, the confidence region 
$R_j(\siglevel)$ in \eqref{eq:regionest} reduces to an interval.
We applied the three methods on the same data sets used in the previous comparison
to construct $95\%$ confidence intervals for individual coefficients and to test
$H_k: \beta_{0k}=0$ for $k\in \N_p$.
To save space, the detailed results are relegated to Supplemental Material.
Here, we briefly summarize the key findings.

First, our bootstrap method
shows an almost perfect control of the type-I error, slightly lower than but very close to $5\%$,
while its power is quite high, between $0.5$ and $0.9$ for almost every setting.
The type-I error rate of the de-sparsified lasso is again substantially higher than the desired level.
The de-biased lasso, on the other hand, is seen to be conservative with
type-I error rate close to or below 1\% for all the cases. 
Second, in terms of interval estimation, our method shows a
close to $95\%$ coverage rate, averaging over all coefficients, with a shorter interval length
than the other two competitors.
Our bootstrap method built shorter intervals with coverage rate close to $95\%$ for zero coefficients,
while the other two methods showed higher coverage rates with slightly wider intervals for active coefficients.
We observe that the interval lengths between active and zero coefficients are very different for our method
but are almost identical for the two de-biased lasso methods. 
For $j\notin A_0$, the variance of the lasso $\hbeta_j$ is in general smaller
and $\hbeta_j$ can be exactly zero. Our method makes use of such sparsity
to improve the efficiency of interval estimation for zero coefficients. The 
de-biased lasso $\hat{b}$ \eqref{eq:dsest} de-sparsifies all components
of the lasso, in some sense averaging the uncertainty over all coefficients.
Lastly, the effect of grouping variables can be seen by comparing the results for our parametric bootstrap 
with those in Table~\ref{tab:groupsim}: At almost the same level of type-I error rate, its power and coverage rate of 
active coefficients can be boosted substantially through either way of grouping,
which numerically confirms our motivation to group
coefficients for a more sensitive detection of signals. 

Finally, we compared the running time between our method applied to the lasso and the de-sparsified lasso method.
The time complexity of our method is in proportion to the bootstrap sample size $N$,
which has been fixed to $N=300$ above. 
We ran both methods without parallelization to make inference about all $p$ coefficients. 
For a wide range of choices for $(n,p)$, our method was uniformly,
in some settings more than 10 times, faster than the de-sparsified lasso.
Bootstrapping the group lasso
is even faster, since the group lasso in general runs faster than the lasso.

\subsection{Weak and dense signals}\label{sec:weakgroup}

In the second simulation study, we added a third active group of 10 coefficients
to the setting of $(n,p)=(100,400)$ with grouping $\calP_2$. We set the coefficients $\beta_{0k}\in\{\pm\eps\}$
in the third group and chose $\eps\in\{0.02,0.2\}$. The signal of this group, in particular 
when $\eps=0.02$, was much weaker than that of the first two groups. 
The vector $\beta_0$ also became denser with $q_0=3$ and $s_0=20$. 
Both aspects make the data sets more challenging for an inferential method.
We note that neither the sparsity condition \eqref{eq:sparsescaling} nor 
the signal strength condition \eqref{eq:separablebeta} in Assumption~\ref{as:betasignal} is satisfied.
The results here thus can indicate how the bootstrap method works when key
assumptions of our asymptotic theory are violated.
To obtain accurate estimation of coverage rates in presence of such small signals,
we increased the number of data sets generated in each setting to $K=50$.

We applied our bootstrap method and the de-sparsified lasso with Bonferroni adjustment
to perform group inference on these data sets, as we did in Section~\ref{sec:numericalgroup}.
Moreover, we conducted a Wald test with the de-sparsified lasso $\hat{b}$ as follows.
The asymptotic distribution of $\hat{b}$ \eqref{eq:dsest} implies that, for a fixed group $G$ of size $m$,
$T_G=n(\hat{b}_G-\beta_{0G})^\trans V_{GG}^{-1}(\hat{b}_G-\beta_{0G})$ follows
a $\chi^2$ distribution with $m$ degrees of freedom as $n\to\infty$, where $V$ is the covariance
of the Gaussian random vector $W$. Thus, one may use the statistic $T_G$ to test 
whether a group of coefficients is zero such as in $H_{0,j}:\beta_{0(j)}=0$.
The results of the three methods are reported in Table~\ref{tab:weakgroup}. 
To highlight the expected difference in performance, we separately report 
the coverage rate and power
for the strong groups, $r_{S}$ and PWR$_S$, and those for the weak group, $r_{W}$ and PWR$_W$.

\begin{table}[ht]
\caption{Coverage rate, power and false positive rate in presence of weak and dense signals \label{tab:weakgroup}}
\begin{center}
 \begin{tabular}{@{\extracolsep{4pt}}ccrrrrrrrr@{}}
  \hline  
\multicolumn{2}{c}{ } & \multicolumn{4}{c}{$\epsilon=0.02$} & \multicolumn{4}{c}{$\epsilon=0.2$} \\ \cline{3-6} \cline{7-10}
 & {$(a,d)$} & (1, i) & (1, ii) & (2, i) & (2, ii) & (1, i) & (1, ii) & (2, i) & (2, ii) \\ \hline
bootstrap 			& $r_{S}$			& 86.0 & 86.0 & 83.0 & 84.0 & 86.0 & 87.0 & 82.0 & 87.0 \\  
				& $r_{W}$			& 42.0 & 60.0 & 52.0 & 62.0 & 46.0 & 12.0 & 14.0 & 22.0 \\  
				& PWR$_S$		& 77.0 & 94.0 & 75.0 & 94.0 & 66.0 & 94.0 & 68.0 & 87.0 \\  
				& PWR$_W$		& 8.0 & 6.0 & 4.0 & 6.0 & 84.0 & 54.0 & 70.0 & 78.0 \\  
				& FPR			& 4.9 & 4.2 & 5.0 & 4.1 & 2.6 & 3.6 & 3.2 & 2.2 \\  \hline
de-sparsified 		& PWR$_S$		& 99.0 & 99.0 & 100.0 & 100.0 & 97.0 & 99.0 & 98.0 & 97.0 \\ 
(Bonferroni)		& PWR$_W$		& 18.0 & 32.0 & 8.0 & 20.0 & 84.0 & 48.0 & 70.0 & 78.0 \\  
				& FPR			& 20.8 & 23.9 & 16.2 & 18.6 & 18.2 & 27.5 & 16.9 & 19.1 \\  \hline
de-sparsified 		& PWR$_S$		& 71.0 & 88.0 & 83.0 & 91.0 & 58.0 & 85.0 & 70.0 & 81.0 \\  
(Wald)			& PWR$_W$		& 0.0 & 0.0 & 0.0 & 0.0 & 30.0 & 0.0 & 0.0 & 0.2 \\  
				& FPR			& 0.0 & 0.2 & 0.1 & 0.1 & 0.0 & 0.0 & 0.0 & 0.3 \\  \hline
\end{tabular}
\end{center}
$r_S$, $r_W$: coverage rate for strong and weak signal groups, respectively;
PWR$_S$, PWR$_W$: power for strong and weak signal groups, respectively;
FPR: false positive rate.
\end{table}

Again our bootstrap method achieved a good control of type-I error rate, all around
or below 5\%. The coverage rate and power for the strong groups are comparable to those 
in Table~\ref{tab:groupsim}, indicating that they were not affected by the inclusion of a weak group.
The power for detecting the third group is low when $\eps=0.02$, which is fully expected given
the low signal strength, and becomes much higher when $\eps$ is increased to $0.2$.
As discussed above, the data simulated here do not satisfy Assumption~\ref{as:betasignal}.
As a consequence, the bootstrap samples might not provide
a good approximation to the sampling distribution, which could be a reason
for the low coverage rate of the weak group.
We observe that $r_W$ was in general higher when $\epsilon=0.02$ than when $\epsilon=0.2$.
This is because $\sup_{S_2}\|\beta_{0(j)}\|$ with $\epsilon=0.02$ is closer to the requirement in
\eqref{eq:separablebeta} for small coefficients.
The de-sparsified lasso with Bonferroni correction failed to control the type-I error rate
at the desired level. While it shows a higher detection power for the weak group
when $\eps=0.02$,  its power is largely comparable to our method when $\eps=0.2$.
The gain in power could simply be the result of high false positive rates.
Compared to Table~\ref{tab:groupsim}, we see a decrease in the false positive rates. 
This is because the error variance $\sigma^2$ was overestimated for these data sets, 
which alleviated the underestimate of p-values by the de-sparsified lasso.
On the contrary, the Wald test seems too conservative, almost never rejecting any
zero group. Its power of detecting the weak group is close to zero for most of the cases.
These results are the consequence of ignoring the term $\Delta$ in \eqref{eq:dsest},
which introduces systematic bias in the Wald test statistic for finite samples.
This numerical comparison demonstrates the advantage of our bootstrap method
in the existence of weak coefficient groups under a relatively dense setting.

\subsection{Real data designs}\label{sec:realdata}

We further tested our method on design matrices drawn from a gene expression data set
\citep{ivanova2006}, which contains expression profiles for about $40,000$ mouse genes across $n=70$ samples.
The expression profile of each gene was transformed to a standard normal distribution via quantile 
transformation. The following procedure was used to generate data sets for our comparison.
First, randomly pick $p$ genes and denote their expression profiles by $X_j\in\R^n$ for $j=1,\ldots,p$. 
We calculate the correlation coefficients $(r_{ij})_{p\times p}$ among $X_j$'s and
the total absolute correlation $r_{i\bullet}=\sum_j |r_{ij}|$ for each gene $i\in\N_p$.
For the gene with the highest $r_{i\bullet}$, we identify
the $(m-1)$ genes that have the highest absolute correlation with this gene. These $m$ genes
are put into one group of size $m$. Then we remove them from the gene set and
repeat this grouping process until we partition all $p$ genes into $J=p/m$ groups. 
This grouping mechanism results in high correlation among covariates in the same group.
Next, fixing the first $q_0$ groups to be active,
we draw their coefficients $\beta_{0k}\sim \text{Unif}(-b,b)$. 
The parameters in the above procedure were chosen as $p\in\{500,1000\}$, $b\in\{1,3,5\}$,
$m=10$, and $q_0=3$. For each combination of $(p,b)$, we obtained $K=100$ independent
realizations of $(\bfX,\beta_0)$. Given each realization, a range of the noise variance 
$\sigma^2\in\{0.1,0.5,1\}$ was then used to simulate the response $y\sim \dnorm_n(\bfX\beta_0,\sigma^2\bfI_n)$.
Compared to the data generation settings in Section~\ref{sec:simulationsettings}, data sets in this subsection 
have a smaller sample size $n=70$
but a higher dimension $p$, and the correlation among the covariates is much higher.  
These put great challenges on an inferential method.

\begin{table}[t]
\caption{Comparison of power and false positive rate on gene expression data\label{tab:realdata}}
\begin{center}
 \begin{tabular}{@{\extracolsep{4pt}}ccrrrrrrrr@{}}
  \hline  \hline
\multicolumn{3}{c}{\multirow{2}{*}{Data Setting}} & \multicolumn{2}{c}{Group inference} & 
\multicolumn{5}{c}{Individual inference ($p_j=1$)} \\
\cline{6-10}
    & & & \multicolumn{2}{c}{bootstrap} & \multicolumn{2}{c}{bootstrap}  &  \multicolumn{3}{c}{de-sparsified lasso} \\ 
  \cline{1-3}\cline{4-5}\cline{6-7}\cline{8-10}
 $p$ & $\beta_0$ & $\sigma^2$ & PWR  & FPR & PWR  & FPR &  PWR  & FPR & PWR$^*$  \\   \hline
  \multirow{9}{*}{$500$}
   			& \multirow{3}{*}{$(-1,1)$} 		&	0.1		& 50.3 & 1.6 & 14.2 & 3.1 & 41.9 & 19.3 & 12.9 \\ 
			&						& 	0.5		& 24.7 & 1.8 & 13.1 & 3.1 & 36.1 & 15.0 & 12.0 \\ 
			& 						&	1		& 24.7 & 2.4 & 12.0 & 3.0 & 31.8 & 12.7 & 13.0 \\ 
   			& \multirow{3}{*}{$(-3,3)$} 		&	0.1		& 61.3 & 1.0 & 15.7 & 3.3 & 51.0 & 27.1 & 13.2 \\  
			&						& 	0.5		& 54.7 & 1.1 & 15.0 & 3.2 & 46.2 & 21.0 & 10.7 \\ 
			& 						&	1		& 48.7 & 1.2 & 15.1 & 3.4 & 47.3 & 22.4 & 11.2 \\ 
   			& \multirow{3}{*}{$(-5,5)$} 		&	0.1		& 58.7 & 1.4 & 14.9 & 3.1 & 48.5 & 24.4 & 14.3 \\ 
			&						& 	0.5		& 57.7 & 1.2 & 14.9 & 3.1 & 43.1 & 20.0 & 10.8 \\ 
			& 						&	1		& 55.7 & 0.9 & 14.2 & 3.1 & 44.4 & 19.2 & 14.3 \\ 
 \hline
   \multirow{9}{*}{$1000$}
   			& \multirow{3}{*}{$(-1,1)$} 		&	0.1		& 41.0 & 1.1 & 12.4 & 2.0 & 36.3 & 14.0 & 12.0 \\ 
			&						& 	0.5		& 29.3 & 2.0 & 11.5 & 2.0 & 29.8 & 10.6 & 15.3 \\
			& 						&	1		& 30.3 & 1.1 & 10.4 & 1.8 & 29.4 & 10.5 & 13.2 \\ 
   			& \multirow{3}{*}{$(-3,3)$} 		&	0.1		& 41.0 & 0.8 & 13.9 & 2.1 & 34.8 & 13.9 & 14.2 \\
			&						& 	0.5		& 33.7 & 0.7 & 12.2 & 2.1 & 37.4 & 15.5 & 13.8 \\  
			& 						&	1		& 46.7 & 1.3 & 12.8 & 2.1 & 39.5 & 17.7 & 13.4 \\  
   			& \multirow{3}{*}{$(-5,5)$} 		&	0.1		& 50.0 & 1.0 & 12.4 & 2.1 & 33.6 & 12.3 & 13.9 \\ 
			&						& 	0.5		& 47.7 & 0.9 & 12.8 & 2.1 & 34.3 & 14.3 & 10.9 \\
			& 						&	1		& 45.7 & 0.8 & 11.8 & 2.1 & 36.9 & 15.9 & 11.9 \\  
 \hline
 \end{tabular}
 \end{center}
FPR: false positive rate; PWR: power; PWR$^*$: power of the de-sparsified lasso after matching false positive rate
to 5\%.
\end{table}

We applied both the bootstrap and the de-sparsified lasso methods to perform group inference as we did
in Section~\ref{sec:numericalgroup}. As reported in Table~\ref{tab:realdata}, our bootstrap method
gives a good and slightly conservative control over type-I errors, 
with false positive rates all close to but below $5\%$, the desired level. Its power in general increases
as the signal-to-noise ratio increases and 
is seen to be around 0.5 when the signal-to-noise ratio is reasonably high. 
On the contrary, the type-I error rate of the de-sparsified lasso method, not reported in the table, 
was even $>0.9$ for most of
the cases, showing that it failed to provide an acceptable p-value approximation for these data sets.
This might be caused by the facts that this method is not designed for group inference and that $n=70$ is
too small for asymptotic approximation. To conduct a complete comparison, we then used
both methods to make inference about individual coefficients as in Section~\ref{sec:individual},
in which the group structures were totally ignored in our method by setting all $p_j=1$.
Our method again controlled the type-I error to a level slightly lower than $5\%$, but
showed a decrease in power, as expected, without utilizing grouping. 
The false positive rate of the de-sparsified lasso became smaller for individual inference, 
ranging between $15\%$ and $30\%$, but
still far from the desired level of $5\%$. This makes it difficult to compare
power between the two methods, as the observed higher power of one method could
simply come at the cost of more false positives. To resolve this issue, we sorted 
the p-values for all the zero coefficients output from the de-sparsified lasso method,
and chose a cutoff $p^*$ such that $5\%$ of them would be rejected. In this way, the false positive rate by definition
is always $5\%$, slightly higher than that of our bootstrap method, 
while the corresponding power
becomes largely comparable. We also noticed that the overestimate of
the significance level by the de-sparsified lasso method was severe for these data sets: 
To achieve the target type-I error rate of 
$5\%$, the cutoff for its p-values turned out to be $<0.002$ for all the settings and 
was even much smaller for many of them. 

This comparison shows that our parametric bootstrap method can achieve a desired 
level of false positive control in presence of high correlation among a large number of predictors.
It again confirms that grouping variables can lead to substantial power gain.

\subsection{Sensitivity to thresholding}\label{sec:sensitivity}

With $\hlmd$ chosen by cross-validation, the only parameter that requires user input in our method
is the threshold value $b_{\supth}$. 
The following experiment has been conducted to examine how sensitive our method is to this parameter. 
Given the group lasso solution $\hbeta$, we reorder its groups so that
$\|\hbeta_{(1)}\|\geq \ldots \geq \|\hbeta_{(J)}\|$.
Then we choose a range of threshold values, $b_{\supth}=\|\hbeta_{(k+1)}\|$,
such that the thresholded $\tdbeta$ has $k$ active groups for $k=0,\ldots,K$, say $K=6$.
We applied this procedure on the simulated data sets generated 
from the four settings with $n=100,p=400$ and $\calG=\calP_2$ in Table~\ref{tab:groupsim}, 
which have $q_0=2$ active groups. These were the most difficult settings for which
our bootstrap method had the lowest power and coverage rate $r_A$.
Figure~\ref{fig:sensitivity} plots the curve of the false positive rate and 
the curve of the active group coverage rate $r_A$ against $k$, the number of active groups after thresholding,
in each of the four settings.

\begin{figure}[t]
\centering
 \includegraphics[height=0.95\linewidth,angle=-90,trim=0.5in 0in 0in 0in,clip]{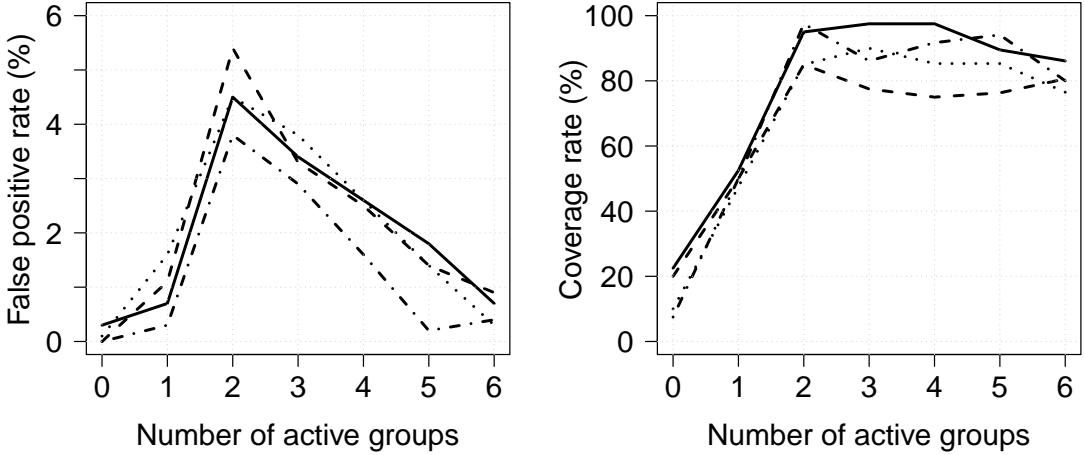} \\
   \caption{Sensitivity analysis on thresholding:
   (left) false positive rate and (right) coverage rate of active groups against the number of
   active groups of thresholded group lasso for the four settings
   of $(a,d)=$ (1, i) (solid), (1, ii) (dash), (2, i) (dot), and (2, ii) (dot-dash).   \label{fig:sensitivity}}
\end{figure}

The false positive rates are well-controlled at the desired level of $5\%$ for all
the threshold values. They are around 0.05 when $b_{\supth}$ is well-chosen so that $\tdbeta$ 
has $k=q_0=2$ active groups, and become smaller when  $b_{\supth}$ deviates from the optimal value.
This suggests that our method is not sensitive to the threshold value in terms of type-I error control.
The coverage of the active groups stays at a high level when $\tdbeta$ contains two or more active groups,
but can be substantially lower if one or both of the true active groups are missing. 
Thus, including a few zero groups in the active set of $\tdbeta$ will not hurt the performance  of our method 
that much, since via refitted least-squares, the estimated coefficients of these groups tend to be small.
Similar patterns were observed for inference on individual coefficients, when $b_{\supth}$ was chosen
for $\tdbeta$ to have up to 30 nonzero coefficients while the true active set 
contained only $10$ variables.

\section{Generalizations and discussions}\label{sec:generalizations}

\subsection{Block lasso and sub-Gaussian error}\label{sec:blocksubG}

The bootstrap method outlined in Section~\ref{sec:pbinference} can be generalized to other 
sparse regularization methods under different error distributions in an obvious way.
However, the difficulty is to validate such generalizations theoretically. In this subsection,
we provide theory for two generalizations:
First, we assume that the error vector $\varepsilon \in \R^n$ is zero-mean sub-Gaussian. 
Second, we consider the general $(1,\alp)$-group norm \eqref{eq:groupnorm} for $\alpha\in [2,\infty]$ 
and correspondingly the block lasso estimator
\begin{equation}\label{eq:blocklassodef}
\hbeta \in \argmin_{\beta} \frac{1}{2} \|y-\bfX \beta \|^2 + n \lambda \| \beta\|_{\calG,\alpha}.
\end{equation}
Our method is essentially to bootstrap the block lasso under a sub-Gaussian error distribution. We assume
the error distribution is given from which we can draw $\varepsilon^*$.
Define $\hdelta$, $\shdelta$, and $\sdelta$ as in \eqref{eq:defalldelta} but with $\hbeta$,
$\hbeta^*$ and $\sbeta$ denoting the corresponding block lasso estimates instead.

Recall that $\alp^*$ is conjugate to $\alp$.
To establish asymptotic theory for bootstrapping the block lasso, we need a modified version of 
Assumption~\ref{as:betasignal}:

\begin{assumption}\label{as:betasignal_alp}
The true coefficient vector $\beta_0$ is sparse:
\begin{equation}\label{eq:sparsescaling_alp}
{p_{\max}^{2/\alp^*-1} q_0 (p_{\max} \vee \log J)}/{\sqn} =o(1),
\end{equation}
and $A_0=S_1 \cup S_2$ such that
\begin{equation}\label{eq:condbeta0_alp}
\inf_{j\in S_1}\frac{\|\beta_{0(j)}\|_{\alp}}{\sqpj} \gg \frac{b_n(\alp)}{\sqn}
 \quad\text{and}\quad \sup_{j\in S_2}\frac{\|\beta_{0(j)}\|_{\alp}}{\sqpj} \ll \frac{1}{b_n(\alp)\sqn},
\end{equation}
where $b_n(\alp)=p_{\max}^{1/\alp^*-1/2} s_0\{1\vee{(\log J/p_{\max}})^{1/2}\}$.
\end{assumption}

Together with a proper choice of $\lambda$, 
we can now generalize Theorem~\ref{thm:asympknownvar} to the block lasso.
\begin{theorem}\label{thm:consistency_alpha}
Consider the model \eqref{eq:model} with sub-Gaussian noise $\varepsilon$
and $p_{\max}/p_{\min}\asymp 1$. Let $\alpha \in [2,\infty)$ and $\frac{1}{\alpha}+\frac{1}{\alpha^*}=1$.
Suppose that Assumptions \ref{as:design}, \ref{as:pluginest} and \ref{as:betasignal_alp} hold.
Consider a suitable choice of 
\begin{equation}\label{eq:lambdaasy_alp}
\lambda \asymp p_{\max}^{1/\alp^*-1/2}\{(p_{\max} \vee \log J)/{n}\}^{1/2}.
\end{equation} 
Let $r_n$ be as in \eqref{eq:defrn} for some $\hbeta$ defined by \eqref{eq:blocklassodef}
and $\tddelta=r_n(b-\beta_0)$, where $b$ is any minimizer of \eqref{eq:blocklassodef}.
Suppose that $\nu[\varepsilon^*]=\nu[\varepsilon]$ in the definition of $\shdelta$ and $\sdelta$. 
Then for every $\epsilon>0$,
\begin{align}
&\Prob(\|\hdelta-\tddelta\| > \epsilon \mid \bfX)=o_p(1), \label{eq:difftwohdelta_alp} \\
&\Prob(\|\sdelta-\shdelta\| > \epsilon \mid \bfX,\tdbeta)=o_p(1). \label{eq:DeltatoinP0_alp}
\end{align}
\end{theorem}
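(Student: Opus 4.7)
The strategy is to mirror the proof of Theorem~\ref{thm:asympknownvar}, adapting two ingredients: a sub-Gaussian tail bound in the dual norm of the block penalty, and a restricted strong convexity step keyed to $\|\cdot\|_{\calG,\alpha}$ rather than $\|\cdot\|_{\calG,2}$. Since the dual of $\|\cdot\|_{\calG,\alpha}$ is $\max_j \|\cdot_{(j)}\|_{\alpha^*}$, the key random quantity that drives the rate $\lambda$ in \eqref{eq:lambdaasy_alp} is $\max_{j\in\N_J}\|\bfX_{(j)}^\trans\varepsilon/n\|_{\alpha^*}$. For sub-Gaussian $\varepsilon$, each coordinate of $\bfX_{(j)}^\trans\varepsilon$ is sub-Gaussian with variance proxy $O(n)$ under the uniform bound $\Lambda_{\max}(\bfC_{(jj)})\leq \bar c$ in Assumption~\ref{as:design}. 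A union bound over the $p_j$ coordinates within a group and over $J$ groups, combined with the comparison $\|v\|_{\alpha^*}\leq p_{\max}^{1/\alpha^*-1/2}\|v\|_2$ for $\alpha^*\leq 2$, delivers $\max_j\|\bfX_{(j)}^\trans\varepsilon/n\|_{\alpha^*}\lesssim p_{\max}^{1/\alpha^*-1/2}\{(p_{\max}\vee\log J)/n\}^{1/2}$ with high probability, which matches \eqref{eq:lambdaasy_alp}. This is where the prefactor $p_{\max}^{1/\alpha^*-1/2}$ enters and explains why the sparsity scaling \eqref{eq:sparsescaling_alp} carries the same extra factor.

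\textbf{Consistency of the block lasso and statement \eqref{eq:difftwohdelta_alp}.} With the tail bound in hand, the usual basic inequality $\frac{1}{2n}\|\bfX(\hbeta-\beta_0)\|^2 \leq \lambda(\|\beta_0\|_{\calG,\alpha}-\|\hbeta\|_{\calG,\alpha}) + \la \varepsilon/n,\bfX(\hbeta-\beta_0)\ra$ together with the choice of $\lambda$ forces $\hbeta-\beta_0$ into a cone of the form $\|\Delta_{(A_0^c)}\|_{\calG,\alpha}\leq 3\|\Delta_{(A_0)}\|_{\calG,\alpha}$. Combined with Assumption~\ref{as:design} (which, as the authors note, implies a restricted eigenvalue condition at scale $\rho(\alpha^*)$), this yields an $\ell_2$ error bound of order $\lambda\sqrt{q_0 p_{\max}}$ up to logarithmic factors, from which $r_n$ in \eqref{eq:defrn} inherits the bound \eqref{eq:rateboundsofrn}. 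Since the bound applies to every minimizer $b$ of \eqref{eq:blocklassodef} with the same constants, \eqref{eq:difftwohdelta_alp} follows at once by triangle inequality.

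\textbf{Bootstrap deviation \eqref{eq:DeltatoinP0_alp}.} Let $L(\beta;u)=\tfrac{1}{2}\|u-\bfX\beta\|^2+n\lambda\|\beta\|_{\calG,\alpha}$ and introduce the auxiliary response $y^{**}=\bfX\beta_0+\varepsilon^*$, so that $\hbeta^*\in B(y^{**};\lambda)$ while $\sbeta\in B(y^*;\lambda)$. The map $\beta\mapsto L(\beta;y^*)-L(\beta;y^{**})=-\la\bfX(\tdbeta-\beta_0),\bfX\beta\ra + \text{const}$ is linear, so combining the two optimality inequalities gives
\begin{equation*}
\tfrac{1}{n}\|\bfX(\sbeta-\hbeta^*)\|^2 \;\leq\; \tfrac{2}{n}\la\bfX(\tdbeta-\beta_0),\,\bfX(\sbeta-\hbeta^*)\ra.
\end{equation*}
Conditional on $\tdbeta\in\scrB(M_1)$ from Assumption~\ref{as:pluginest}, the vector $\tdbeta-\beta_0$ is supported on $\calG_{S_1\cup S_2}$; the $S_1$ part has normalized groupwise $\ell_2$ norm of order $r_n^{-1}$ while the $S_2$ part is controlled by the small-signal side of \eqref{eq:condbeta0_alp}. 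Using Cauchy--Schwarz, the bound $\Lambda_{\max}(\bfC_{(jj)})\leq \bar c$, and the sparsity scaling \eqref{eq:sparsescaling_alp}, the right-hand side is $o_p(1)\cdot \|\bfX(\sbeta-\hbeta^*)\|/\sqrt n$, yielding $\tfrac{1}{n}\|\bfX(\sbeta-\hbeta^*)\|^2 = o_p(1)$.

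\textbf{From prediction norm to $\ell_2$ norm, and the main obstacle.} The hardest step, exactly as flagged after Theorem~\ref{thm:asympknownvar}, is that $\sbeta-\hbeta^*$ need not satisfy a cone condition and is typically not sparse, so one cannot invoke a restricted eigenvalue directly. I would instead use Assumption~\ref{as:design} in its full form, $\tfrac{1}{n}\|\bfX\Delta\|^2\geq \kappa_1\|\Delta\|^2-\kappa_2\rho^2(\alpha^*)\|\Delta\|_{\calG,\alpha}^2$, which holds for \emph{every} $\Delta$. Applied with $\Delta=\sbeta-\hbeta^*$, the remaining task is to absorb the $\|\Delta\|_{\calG,\alpha}^2$ term: bound $\|\sbeta\|_{\calG,\alpha}$ and $\|\hbeta^*\|_{\calG,\alpha}$ using the basic inequality for each solution to get $\|\Delta\|_{\calG,\alpha}=O_P(\sqrt{q_0} p_{\max}^{1/\alpha^*-1/2})$, and note that $\rho^2(\alpha^*)\asymp p_{\max}^{2/\alpha^*-1}(p_{\max}\vee\log J)/n$ by a standard maximum-of-Gaussians computation. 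The scaling \eqref{eq:sparsescaling_alp} is precisely what makes $\kappa_2\rho^2(\alpha^*)\|\Delta\|_{\calG,\alpha}^2=o_p(1)$, so $\kappa_1\|\sbeta-\hbeta^*\|^2\leq \tfrac{1}{n}\|\bfX(\sbeta-\hbeta^*)\|^2+o_p(1)=o_p(1)$, and multiplying by $r_n^2$ (which grows no faster than $n$) and checking the rate arithmetic against \eqref{eq:sparsescaling_alp} completes \eqref{eq:DeltatoinP0_alp}. The balance in this last step is where all the assumptions must be used simultaneously, and is the most delicate part of the argument.
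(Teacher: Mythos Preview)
Your comparison inequality $\frac{1}{n}\|\bfX(\sbeta-\hbeta^*)\|^2\leq \frac{2}{n}\la \bfX(\tdbeta-\beta_0),\bfX(\sbeta-\hbeta^*)\ra$ is correct, but the strategy built on it breaks down in two related ways.

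First, you are bounding the wrong quantity. By definition $\sdelta=r_n(\sbeta-\tdbeta)$ and $\shdelta=r_n(\hbeta^*-\beta_0)$, so $\sdelta-\shdelta=r_n(\sbeta-\hbeta^*)-r_n(\tdbeta-\beta_0)$. Even if you showed $r_n\|\sbeta-\hbeta^*\|=o_p(1)$, you would still need $r_n\|\tdbeta-\beta_0\|=o_p(1)$, which fails: Assumption~\ref{as:pluginest} only gives $r_n\|\tdbeta_{(j)}-\beta_{0(j)}\|\leq M_1\sqpj$ on active groups, so $r_n\|\tdbeta-\beta_0\|$ is of order $\sqrt{s_0}$, not $o(1)$.

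Second, your rate arithmetic for $r_n\|\sbeta-\hbeta^*\|$ itself does not close. Your display and Cauchy--Schwarz give $\frac{1}{n}\|\bfX(\sbeta-\hbeta^*)\|^2\lesssim\frac{1}{n}\|\bfX(\tdbeta-\beta_0)\|^2$, and with $\|\tdbeta-\beta_0\|^2\lesssim q_0 p_{\max}/r_n^2$ this yields $\frac{r_n^2}{n}\|\bfX(\sbeta-\hbeta^*)\|^2\lesssim q_0 p_{\max}$, which need not be $o(1)$ under \eqref{eq:sparsescaling_alp}. The gap is that your bound tracks only the size of $\tdbeta-\beta_0$, whereas the paper's argument extracts an additional small factor from the \emph{curvature} of the penalty. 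Concretely, the paper works in the recentered variable $\delta$ so that the quadratic and linear parts of $V_\alpha(\delta;\tdbeta,U^*)$ and $V_\alpha(\delta;\beta_0,U^*)$ coincide exactly; only the penalty term differs, and Lemma~\ref{lm:boundVforbeta_alpha} bounds this difference via the Hessian of $\|\cdot\|_\alpha$, which is $O\big((\alpha-1)/\|\beta_{0(j)}\|_\alpha\big)$ on strong groups. This produces the factor $\eta_{\alpha,2}\asymp \sup_{S_1}\sqpj/(r_n\|\beta_{0(j)}\|_\alpha)$, which is $o(1)$ precisely by the beta-min side of \eqref{eq:condbeta0_alp}. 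That is the missing idea: the signal-strength assumption is used not to control $\|\tdbeta-\beta_0\|$ but to damp the penalty perturbation, and without it the prediction-norm bound you obtain is too coarse by a factor of order $s_0$.
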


The assumptions of this theorem are in parallel to those of Theorem~\ref{thm:asympknownvar}. 
The differences appear in Assumption~\ref{as:betasignal_alp} on $\beta_0$
and the order of $\lambda$ \eqref{eq:lambdaasy_alp},
both reducing to the corresponding assumptions in Theorem~\ref{thm:asympknownvar} when $\alp=\alp^*=2$.
This theorem does not include the case $\alp=\infty$. For this case, we need to impose an additional
assumption on the margin of $\beta_{0}$ defined as follows. 
For $\theta=(\theta_1,\ldots,\theta_m)\in \R^m$, let $\pi$ be a permutation of the set $\N_m$
such that $|\theta_{\pi(1)}|\geq \ldots \geq |\theta_{\pi(m)}|$. Define the margin of $\theta$ by
\begin{equation}\label{eq:margin}
d(\theta)= \frac{1}{\surd{2}}(|\theta_{\pi(1)}|-|\theta_{\pi(2)}|).
\end{equation}

\begin{theorem}\label{thm:consistency_inf}
Let $\alp=\infty$ and $\alp^*=1$. In addition to the assumptions of Theorem~\ref{thm:consistency_alpha},
further assume that
\begin{equation}\label{eq:marginasy}
\inf_{j\in S_1} d(\beta_{0(j)}) \gg \lambda (q_0)^{1/2}.
\end{equation}
Then all the conclusions in Theorem~\ref{thm:consistency_alpha} hold.
\end{theorem}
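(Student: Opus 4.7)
The strategy is to reduce the $\alpha=\infty$ case to the architecture of the proof of Theorem~\ref{thm:consistency_alpha} by exploiting the margin condition \eqref{eq:marginasy} to make the subdifferential of $\|\cdot\|_{\infty}$ behave as a single-valued, locally linear functional on each strong-signal group. The central technical difficulty, compared to $\alpha<\infty$, is that $\|\cdot\|_{\infty}$ is not strictly convex and its subdifferential is set-valued: for $\theta\ne 0$, $\partial\|\theta\|_{\infty}$ is the convex hull of $\{\sgn(\theta_k)e_k: k\in\argmax_i|\theta_i|\}$, which collapses to a single sign vector precisely when the argmax index is unique.

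First, I would establish an \emph{argmax identification} lemma: with high probability the coordinate $k^*(j)=\argmax_i|\beta_{0i,(j)}|$ is unique for every $j\in S_1$, and every candidate estimator agrees with $\beta_{0(j)}$ on this argmax. For the point estimator, Assumption~\ref{as:pluginest} gives $\|\tdbeta_{(j)}-\beta_{0(j)}\|_\infty \le \|\tdbeta_{(j)}-\beta_{0(j)}\| = O(\sqpmax/r_n)$ on $S_1$, and \eqref{eq:rateboundsofrn} yields $\sqpmax/r_n=O(\lambda q_0^{1/2})$. The margin assumption \eqref{eq:marginasy} then forces the argmax of $|\tdbeta_{(j)}|$ to coincide with $k^*(j)$ on $S_1$. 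An analogous argument applied to $\hbeta$, $\hbeta^*$ and $\sbeta$ (using the $\ell_\infty$-level deviation bounds that drive the proof of Theorem~\ref{thm:consistency_alpha} specialized to $\alp^*=1$) propagates the same identification to all four estimators, so the subdifferentials $\partial\|\hbeta_{(j)}\|_\infty$, $\partial\|\hbeta^*_{(j)}\|_\infty$, $\partial\|\sbeta_{(j)}\|_\infty$ all reduce to the same singleton $\{\sgn(\beta_{0k^*(j)})e_{k^*(j)}\}$ for $j\in S_1$ with probability tending to one.

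Next, I would revisit the proof of Theorem~\ref{thm:consistency_alpha} step by step. Its two main ingredients are (i) a bound on the difference of the block-lasso loss at the minimizers for data $y$ versus $y^*=\bfX\tdbeta+\varepsilon^*$, obtained by plugging the other minimizer into the optimality condition; and (ii) the conversion of that loss-level bound into an $\ell_2$ bound on the centered deviations via the restricted strong convexity \eqref{eq:groupstrconvexmain}. In step (i), the only place where $\alp<\infty$ was used is in controlling the penalty difference through the subgradient of $\|\cdot\|_\alp$; after the argmax identification above, that subgradient is replaced, on $S_1$, by the fixed vector $\sgn(\beta_{0k^*(j)})e_{k^*(j)}$, while on the zero-group complement $A_0^c$ we can use the generic bound $\|z\|_1\le 1$ for any subgradient of $\|\cdot\|_\infty$. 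The contribution from the weak-signal set $S_2$ is handled exactly as in Theorem~\ref{thm:consistency_alpha}, since \eqref{eq:condbeta0_alp} with $\alp=\infty$ already bounds the $\ell_\infty$ norm of $\beta_{0(j)}$ on $S_2$. Step (ii) is unchanged: restricted strong convexity is an assumption on $\bfX$ alone and the cone-type decomposition of $\shdelta$ and $\sdelta$ follows as before.

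The main obstacle, as I see it, is step (i): on $S_1$ one must show that the (a priori set-valued) subgradients selected at $\hbeta^*$ and $\sbeta$ by the KKT conditions really do coincide with the canonical sign vector $\sgn(\beta_{0k^*(j)})e_{k^*(j)}$, not just that this vector is \emph{some} element of the subdifferential. This is exactly where the excess margin $\lambda q_0^{1/2}$ in \eqref{eq:marginasy} is needed: summing the $\ell_\infty$ perturbation $O(\lambda q_0^{1/2})$ over the $q_0$ strong groups via Cauchy--Schwarz and comparing with the per-group margin yields, with high probability, strict inequality $|\hbeta^*_{k^*(j),(j)}|>|\hbeta^*_{k,(j)}|$ for all $k\ne k^*(j)$, which forces the KKT subgradient to be the singleton. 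Once this uniqueness is in hand, the remainder of the argument mirrors Theorem~\ref{thm:consistency_alpha} with $\alp^*=1$, and both \eqref{eq:difftwohdelta_alp} and \eqref{eq:DeltatoinP0_alp} follow.
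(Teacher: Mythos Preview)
Your overall plan is correct and close in spirit to the paper's proof, but you are working harder than necessary in one place and slightly misplacing the ``main obstacle.'' The paper's proof is very short: all of the machinery for Theorem~\ref{thm:consistency_alpha} (the analogues of Lemmas~\ref{lm:boundDeltaforbeta}, \ref{lm:eventsubset} and Theorem~\ref{thm:finiteknownvar}) goes through unchanged once the loss-comparison bound $|V_\infty(\delta;\tdbeta,u)-V_\infty(\delta;\beta_0,u)|\le h_{\infty,2}$ is available for all $\delta\in\scrD(M_2)$. That bound is Lemma~\ref{lm:boundVforbeta_inf}, and the proof of Theorem~\ref{thm:consistency_inf} therefore consists only of verifying its hypothesis \eqref{eq:margincond} together with $\eta_4=o(1)$, both of which follow in two lines from \eqref{eq:marginasy} and the rate $\sqpmax/r_n=O(\lambda\surd{q_0})$.

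Your emphasis on forcing the KKT subgradients at $\hbeta^*$ and $\sbeta$ to be singletons is a detour you do not need. The only place the KKT condition enters is Lemma~\ref{lm:convexity}, and that inequality holds for \emph{any} subgradient of the penalty, so it requires no modification when $\alpha=\infty$. The genuine $\alpha$-dependent step is the analogue of Lemma~\ref{lm:boundVforbeta_alpha}: for $\alpha<\infty$ one uses the Hessian bound of Lemma~\ref{lm:Hessian}, which blows up as $\alpha\to\infty$; for $\alpha=\infty$ one instead observes that under the margin condition $\|\cdot\|_\infty$ is \emph{locally linear} near $\beta_{0(j)}$, so the mean-value-theorem gradients at the two intermediate points $\xi_j=\tdbeta_{(j)}+c_1 r_n^{-1}\delta_{(j)}$ and $\xi_j^*=\beta_{0(j)}+c_2 r_n^{-1}\delta_{(j)}$ coincide and the $A_{01}$ contribution to $h_{\infty,2}$ vanishes identically. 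Thus the argmax identification is needed only at these deterministic MVT points (controlled uniformly over $\delta\in\scrD(M_2)$), not at the random estimators themselves; your per-group comparison of the margin with the $O(\lambda q_0^{1/2})$ perturbation is the right idea, but no Cauchy--Schwarz summation over groups is required since the condition is applied groupwise.
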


The additional assumption on the margin of $\beta_{0(j)}$ ensures that the $\ell_\infty$ norm
is differentiable in a neighborhood of $\beta_{0(j)}$.
Letting $\alp^*=1$ in \eqref{eq:sparsescaling_alp} and \eqref{eq:lambdaasy_alp}, we have
\begin{align*}
\lambda (q_0)^{1/2}\asymp \left\{\frac{p_{\max}q_0 (p_{\max} \vee \log J)}{n}\right\}^{1/2}
\ll \frac{p_{\max}q_0 (p_{\max} \vee \log J)}{\sqn}=o(1).
\end{align*}
It is seen that assumption \eqref{eq:marginasy} is quite mild, allowing the margin of $\beta_{0(j)}$
to decay to zero.

Let $a_n$ be defined by 
$a_n \sup_{j} \|\hbeta_{(j)}-\beta_{0(j)}\|/\sqpj \asymp_P 1$
for the block lasso $\hbeta$. We can obtain similar result as that in Corollary~\ref{thm:knownvarlaw} 
for $\alp\in[2,\infty]$, although it is unclear when $a_n$ would become optimal 
in this more general case for statistical inference.
We have assumed that the error distribution is known, $\nu[\varepsilon^*]=\nu[\varepsilon]$, in the above.
This may be relaxed to using an estimated error distribution. 
If we assume that the only unknown parameter of the sub-Gaussian error distribution
is a scale parameter, one can show that a point estimator satisfying certain convergence rate
will suffice for establishing the above theorems. 
More general situations are to be studied in the future.

\begin{remark}\label{rm:generalnorm}
In the course of proving Theorems~\ref{thm:consistency_alpha} and \ref{thm:consistency_inf},
we derived nonasymptotic upper bounds on $\|\hbeta-\beta_0\|$ and $\|\hbeta-\beta_0\|_{\calG,\alpha}$ for
$\alp\in[2,\infty]$. This result is of independent interest and obtained under 
a weaker block normalization assumption compared to 
Corollary 4 in \cite{Negahban12}. 
See Supplemental Material for details.
\end{remark}

\subsection{Future work}\label{sec:future}

We have developed asymptotic theory on the consistency of
a parametric bootstrap method for group norm penalized estimators, which
allows for the use of simulation to construct interval estimates and quantify estimation 
uncertainty under group sparsity. 
Due to the intrinsic bias of a sparse penalized estimator, however,
the length of an estimated interval, in general, may not be on the order of the optimal parametric rate of $n^{-1/2}$;
see, for example, \eqref{eq:defan}. 
One possible improvement is to simulate from a less biased estimator instead, such as
the de-biased estimator in \cite{ZhangZhang11} and \cite{vandeGeer13}, 
as discussed in Section~\ref{sec:theorycomparison}.
In order to reach the optimal rate,
the authors rely on solving $p$ lasso problems to obtain the relaxed inverse $\hTheta$, which
becomes a computational bottleneck for this approach.
We may use a different relaxed inverse $\hTheta$ that is computationally cheaper
to define the de-biased estimator $\hat{b}=\hat{b}(\hTheta,y,\bfX)$. 
The same bootstrap method can
be applied to approximate the distribution, $\nu[r_n(\hat{b}-\beta_0)\mid\bfX]$, with $r_n$
determined by the convergence rate of $\hat{b}$. If $\hat{b}$ converges
at a faster rate as it is less biased than the group lasso $\hbeta$, the confidence intervals will 
be shorter asymptotically. Using the bootstrap instead of asymptotic approximation, this approach 
is also expected to have superior finite-sample performance, as was observed in the
numerical comparisons in Section~\ref{sec:numerical}. 
In a similar spirit, \cite{NingLiu14} used a bootstrap strategy to approximate
the distribution of their decorrelated score function for high-dimensional inference.

We have proposed multiple approaches that can provide a point estimate $\tdbeta$ for our bootstrap method.
However, it is arguable that thresholding the group lasso is still the most convenient choice in practice,
without solving another optimization problem. From this perspective, an interesting
future direction is to develop a method to determine an appropriate threshold value from data.
It remains to find out whether Assumption~\ref{as:pluginest} on $\tdbeta$ is a necessary condition
by further analysis. At least, the numerical results in Section~\ref{sec:sensitivity} seem to suggest
that this may not be the case.
Another future direction is to develop 
estimator augmentation \citep{Zhou14} under group sparsity, which, by employing Monte Carlo
methods, could offer great flexibility in sampling from
the distribution of a sparse regularized estimator.


\myappendix
\renewcommand{\thefigure}{S\arabic{figure}}
\setcounter{figure}{0}
\renewcommand{\thetable}{S\arabic{table}}
\setcounter{table}{0}
\renewcommand{\theassumption}{S\arabic{assumption}}
\setcounter{assumption}{0}
\renewcommand{\theremark}{S\arabic{remark}}
\setcounter{remark}{0}

\section{Proofs of results in Section~\ref{sec:theory}}\label{sec:normality}

\subsection{Preliminaries and published results}\label{sec:prepub}

Throughout Sections~\ref{sec:normality} and \ref{sec:unknownvar}, 
we assume that $\varepsilon  \sim \dnorm_n(0,\sigma^2\bfI_n)$ 
and $w_j\in[w_*,w^*]$ for all $j$ with the positive constants $w_*<w^*<\infty$.
Let $\bfC=\frn \bfX^{\trans} \bfX$ be the Gram matrix,
$U = \frn \bfX^{\trans} \varepsilon \in \R^p$, and
$\bfW=\diag(w_1 \bfI_{p_1},\ldots,w_J \bfI_{p_J})\in\R^{p\times p}$.
To motivate the definitions of the centered and rescaled estimators in \eqref{eq:defalldelta},
we note that the penalized loss in \eqref{eq:grplassodef}, up to an additive constant, is identical to
\begin{align}\label{eq:defVn}
 &\, \frac{1}{2} \|y-\bfX \beta \|^2-\frac{1}{2} \|y-\bfX \beta_0 \|^2 + 
 n \lambda \sum_{j=1}^J w_j (\| \beta_{(j)}\| -\| \beta_{0(j)}\|)  \nonumber\\
= &\, \frac{n}{2r_n^2} \delta^{\trans} \bfC \delta - \frac{n}{r_n}\delta^{\trans} U 
+ n\lambda \sum_{j \in A_0} w_j \left(\| \beta_{0(j)}+ r_n^{-1}\delta_{(j)}\|- \| \beta_{0(j)}\|\right) 
+ \frac{n\lambda}{r_n} \sum_{j \notin A_0} w_j \|\delta_{(j)} \| \nonumber\\
\defi &\, V(\delta; \beta_0, U), 
\end{align}
where $\delta = r_n (\beta - \beta_0)\in\R^p$.
Put $U^*=\bfX^{\trans} \varepsilon^*/n$. It follows from \eqref{eq:defalldelta} that
\begin{align}
\hdelta & = r_n(\hbeta-\beta_0) \in \argmin_{\delta} V(\delta;\beta_0,U) \label{eq:defhdelta} \\
\shdelta&=r_n(\hbeta^*-\beta_0) \in\argmin_\delta V(\delta;\beta_0,U^*) \label{eq:defshdelta}\\
\sdelta&= r_n(\sbeta-\tdbeta) \in \argmin_{\delta} V(\delta;\tdbeta,U^*). \label{eq:defsdelta}
\end{align}
We wish to approximate the distribution $\nu[\hdelta]$ by the conditional distribution $\nu[\delta^*\mid \tdbeta]$.

We first collect relevant existing results on the group lasso.
The most relevant are the upper bounds of the $\ell_{1,2}$ and the $\ell_2$ errors
of the group lasso, under the restricted eigenvalue assumption.
For $A\subset \N_J$, define the cone
\begin{equation}\label{eq:conedef}
\scrC(A) = \left\{\Delta\in\R^p:
\sum_{j\in A^c} w_j \|\Delta_{(j)}\|  \leq 3 \sum_{j\in A} w_j \| \Delta_{(j)}\|\right\}.
\end{equation}

\begin{assumption}[RE($m$)]
For $m \in \N_{J}$,
\begin{equation}\label{eq:REdef}
\kappa(m) \defi \min_{|A| \leq m} \min_{\Delta \ne 0} 
\left\{\frac{\|\bfX \Delta\|}{\sqn \|\Delta_{(A)} \|}: \Delta \in \scrC(A) \right\} >0.
\end{equation}
\end{assumption}

This assumption is used by \cite{Lounici11} to derive error bounds for the 
group lasso, which generalizes the restricted eigenvalue assumption for $\ell_1$ regularization \citep{Bickel09}.
The restricted eigenvalue assumption, or the closely related
compatibility condition \citep{vandeGeer09}, is one of the weakest on the design matrix for obtaining
useful results for the lasso and its variates.
We state the following result on relevant error bounds for the 
group lasso from Theorem 3.1 and its proof in \cite{Lounici11}.
Let $w_{\min}$ and $w_{\max}$ denote, respectively, the minimum and the maximum of $\{w_j\}$.

\begin{theorem}\label{thm:grouplasso}
Consider the model \eqref{eq:model} with $\varepsilon \sim \dnorm_n(0,\sigma^2\bfI_n)$ and let $J\geq 2$, $n \geq 1$. 
Suppose $|G(\beta_0)|\leq q$ and Assumption RE$(q)$ is satisfied. 
Then on the event $\calE=\cap_{j=1}^J\{\|U_{(j)}\|\leq w_j \lambda/2\}$,
for any solution $\hbeta$ of \eqref{eq:grplassodef} we have
\begin{equation}\label{eq:boundG2norm}
\|\hbeta-\beta_0\|_{\calG,2}  \leq \frac{16\lambda}{\kappa^2(q)} \sum_{j\in A_0}\frac{w_j^2}{w_{\min}}\defi h_1. \end{equation}
If in addition Assumption RE$(2q)$ holds, then on the same event,
\begin{align}
\|\hbeta-\beta_0 \| & \leq \frac{4\surd{10}}{\kappa^2(2q)} \frac{\lambda \sum_{A_0}w_j^2}{w_{\min}\surd{q}}\defi \tau. 
\label{eq:boundL2}
\end{align}
Choose $a>1$ and 
\begin{align}\label{eq:condlambda}
\lambda \geq \frac{2\sigma}{\sqn} \max_{j\in \N_J} \frac{1}{w_j}
\left[\textup{tr}(\bfC_{(jj)})+2 \Lambda_{\max}(\bfC_{(jj)})\left\{2a\log J+ (a p_j  \log J)^{1/2}\right\}\right]^{1/2}.
\end{align}
Then $\Prob(\calE)\geq 1-2J^{1-a}$.
\end{theorem}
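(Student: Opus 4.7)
The plan is to follow the standard basic-inequality-plus-restricted-eigenvalue strategy, adapted to the group-weighted setting, with the final probability bound reducing to concentration of Gaussian quadratic forms.

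First I would derive a basic inequality from the optimality of $\hbeta$ in (\ref{eq:grplassodef}) tested against $\beta = \beta_0$. Setting $\Delta = \hbeta - \beta_0$ and $U = \bfX^\trans \varepsilon / n$, substituting $y = \bfX \beta_0 + \varepsilon$, and rearranging gives
\begin{equation*}
\frac{1}{2n}\|\bfX\Delta\|^2 \;\leq\; U^\trans \Delta + \lambda \sum_{j=1}^J w_j \bigl(\|\beta_{0(j)}\| - \|\hbeta_{(j)}\|\bigr).
\end{equation*}
Bounding $U^\trans \Delta \leq \sum_j \|U_{(j)}\|\|\Delta_{(j)}\|$ by Cauchy--Schwarz group by group, invoking $\calE$ to replace $\|U_{(j)}\|$ with $w_j\lambda/2$, and applying the reverse triangle inequality together with $\beta_{0(j)} = 0$ for $j \notin A_0$ yields
\begin{equation*}
\frac{1}{2n}\|\bfX\Delta\|^2 + \frac{\lambda}{2}\sum_{j \notin A_0} w_j \|\Delta_{(j)}\| \;\leq\; \frac{3\lambda}{2}\sum_{j \in A_0} w_j \|\Delta_{(j)}\|.
\end{equation*}
This simultaneously places $\Delta$ in the cone $\scrC(A_0)$ of (\ref{eq:conedef}) and drives all the subsequent estimates. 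For (\ref{eq:boundG2norm}) I would apply RE$(q)$ with $A = A_0$ to lower-bound $\|\bfX\Delta\|^2/n$ by $\kappa^2(q)\|\Delta_{(A_0)}\|^2$, use Cauchy--Schwarz in the form $\sum_{A_0} w_j\|\Delta_{(j)}\| \leq \bigl(\sum_{A_0} w_j^2\bigr)^{1/2}\|\Delta_{(A_0)}\|$ on the right-hand side to derive $\|\Delta_{(A_0)}\| \leq 3\lambda\bigl(\sum_{A_0} w_j^2\bigr)^{1/2}/\kappa^2(q)$, and then transfer this to $\|\Delta\|_{\calG,2}$ via the cone inequality $\sum_j \|\Delta_{(j)}\| \leq 4 w_{\min}^{-1}\sum_{A_0} w_j\|\Delta_{(j)}\|$.

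For (\ref{eq:boundL2}) I would use the classical block-decomposition argument. Let $T_0 = A_0$ and partition $A_0^c$ into successive blocks $T_1, T_2, \ldots$ of $q$ group indices each, ordered by decreasing $\|\Delta_{(j)}\|$. Monotonicity gives $\|\Delta_{T_{k+1}}\| \leq \|\Delta_{T_k}\|_{\calG,2}/\sqrt{q}$ for $k \geq 1$, so telescoping combined with the cone condition controls $\|\Delta_{(T_0 \cup T_1)^c}\|$ by a multiple of $\bigl(\sum_{A_0}w_j^2\bigr)^{1/2}\|\Delta_{T_0\cup T_1}\|/(w_{\min}\sqrt{q})$. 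Applying RE$(2q)$ with $A = T_0 \cup T_1$ to the displayed basic inequality bounds $\|\Delta_{T_0\cup T_1}\|$ at the required order, and assembling the two pieces gives the claimed bound on $\|\Delta\|$.

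For the probability of $\calE$, I would condition on $\bfX$: then $U_{(j)} \sim \dnorm_{p_j}(0, (\sigma^2/n)\bfC_{(jj)})$, so $\|U_{(j)}\|^2$ is a Gaussian quadratic form with mean $(\sigma^2/n)\tr(\bfC_{(jj)})$, Frobenius weight $(\sigma^2/n)p_j^{1/2}\Lambda_{\max}(\bfC_{(jj)})$, and spectral weight $(\sigma^2/n)\Lambda_{\max}(\bfC_{(jj)})$. A Laurent--Massart deviation inequality with $x = 2a \log J$ then gives
\begin{equation*}
\Prob\bigl\{\|U_{(j)}\|^2 > (\sigma^2/n)\bigl[\tr(\bfC_{(jj)}) + 2\Lambda_{\max}(\bfC_{(jj)})\{2a\log J + (a p_j \log J)^{1/2}\}\bigr]\bigr\} \leq 2 J^{-a},
\end{equation*}
so by (\ref{eq:condlambda}) the threshold is at most $(w_j \lambda/2)^2$, each individual failure has probability at most $2 J^{-a}$, and a union bound over $j \in \N_J$ yields $\Prob(\calE^c) \leq 2 J^{1-a}$. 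The most delicate step will be tracking the numerical constants: matching the Laurent--Massart form to the exact shape of (\ref{eq:condlambda}), and carrying the $w^*/w_*$ ratios through the cone and block-sorting manipulations cleanly enough to recover $16$ and $4\surd{10}$. The rest is a direct weighted-group analogue of the Bickel--Ritov--Tsybakov template.
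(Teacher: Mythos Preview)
Your proposal is correct and follows the same route as the original proof in Lounici et al.\ (2011), which the paper simply cites without reproving (see the sentence introducing Theorem~\ref{thm:grouplasso}: ``We state the following result \ldots\ from Theorem~3.1 and its proof in \cite{Lounici11}''). The basic inequality, cone membership, RE$(q)$ for the $(1,2)$-norm bound, the peeling/block-sorting argument under RE$(2q)$ for the $\ell_2$ bound, and a Laurent--Massart-type concentration for $\|U_{(j)}\|^2$ (their Lemma~B.1) are exactly the ingredients used there; your constant-tracking concerns are well placed but the structure is identical.
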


Another key result is the following inequality from Proposition 1 in \cite{Negahban12},
which has been used to establish the so-called restricted strong convexity condition
for least-squares loss under group norm regularization.

\begin{lemma}\label{lm:NegahbanProp1}
Let $\alpha\in[1,\infty]$, $Z\sim\dnorm_p(0,\bfI_p)$, and 
\begin{equation}\label{eq:rhostar}
\rho(\alpha^*)=\E\left\{\max_{j\in \N_J} \|Z_{(j)}\|_{\alpha^*}/\sqn\right\}, 
\end{equation}
where $\alpha^*$ is conjugate to $\alpha$ satisfying $\frac{1}{\alpha}+\frac{1}{\alpha^*}=1$. 
Assume that each row of $\bfX$ is drawn independently from $\dnorm_p(0,\bmSigma)$ with $\bmSigma>0$. 
Then there are positive constants $(\kappa_1,\kappa_2)$ that depend only on $\bmSigma$ 
such that, with probability greater than $1-c_1\exp(-c_2 n)$,
\begin{equation}\label{eq:groupstrconvex}
\frac{1}{n}\|\bfX\Delta\|^2 \geq \kappa_1 \|\Delta\|^2 - \kappa_2 \rho^2(\alpha^*) \|\Delta\|^2_{\calG,\alpha}
\quad\text{for all } \Delta\in\R^p.
\end{equation}
\end{lemma}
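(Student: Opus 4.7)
My plan is to combine Gordon's Gaussian min--max inequality with Gaussian Lipschitz concentration and a peeling argument, as in the proof of Proposition~1 in \cite{Negahban12}. Factor $\bfX = \bfW\bmSigma^{1/2}$, where $\bfW$ is an $n\times p$ matrix of i.i.d.\ $\dnorm(0,1)$ entries. It suffices to prove the \emph{linear} version
\[
\|\bfX\Delta\|/\sqn \;\geq\; c_1\|\bmSigma^{1/2}\Delta\| - c_2\rho(\alpha^*)\|\Delta\|_{\calG,\alpha}, \qquad \forall\,\Delta\in\R^p,
\]
with probability at least $1-c_3 e^{-c_4 n}$, since squaring via the elementary inequality $(a-b)^2\geq a^2/2 - b^2$ together with $\|\bmSigma^{1/2}\Delta\|^2 \geq \Lambda_{\min}(\bmSigma)\|\Delta\|^2$ then yields the claimed quadratic bound with $\kappa_1 = c_1^2\Lambda_{\min}(\bmSigma)/2$ and $\kappa_2 = c_2^2$.

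Fix $r>0$ and put $T(r) = \{v\in\R^p : \|v\|=1,\ \|\bmSigma^{-1/2}v\|_{\calG,\alpha}\leq r\}$. Gordon's inequality gives
\[
\E\inf_{v\in T(r)}\|\bfW v\| \;\geq\; \E\|h\| \;-\; \E\sup_{v\in T(r)}\langle g,v\rangle,
\]
where $h\sim\dnorm_n(0,\bfI_n)$ and $g\sim\dnorm_p(0,\bfI_p)$, so that $\E\|h\|\geq \sqn-1$. Since the dual of the $(1,\alpha)$-group norm is $\max_j\|\cdot_{(j)}\|_{\alpha^*}$,
\[
\E\sup_{v\in T(r)}\langle g,v\rangle \;\leq\; r\cdot\E\max_{j\in\N_J}\|(\bmSigma^{1/2}g)_{(j)}\|_{\alpha^*} \;\leq\; c_0\, r\,\rho(\alpha^*)\sqn,
\]
where the last step follows from a Slepian or Sudakov--Fernique comparison of $\bmSigma^{1/2}g\sim\dnorm_p(0,\bmSigma)$ with the isotropic $Z$ from the definition of $\rho(\alpha^*)$, at the cost of a constant $c_0$ controlled by $\Lambda_{\max}(\bmSigma)$. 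Because $v\mapsto\|\bfW v\|$ is $\|v\|$-Lipschitz in $\bfW$ in Frobenius norm and $\|v\|=1$ on $T(r)$, the infimum is $1$-Lipschitz in $\bfW$, and Gaussian concentration yields, for any $\eta>0$,
\[
\Prob\bigl\{\inf_{v\in T(r)}\|\bfW v\|/\sqn \;\geq\; 1 - c_0 r\rho(\alpha^*) - 1/\sqn - \eta\bigr\} \;\geq\; 1 - e^{-n\eta^2/2}.
\]

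To upgrade the fixed-$r$ bound to a bound uniform in $\Delta$, I would run the standard peeling argument: by homogeneity normalize $\|\bmSigma^{1/2}\Delta\|=1$ and set $r=\|\Delta\|_{\calG,\alpha}$; the desired linear inequality is trivially true once $c_2\rho(\alpha^*)r\geq c_1$, so only a bounded range of $r$ needs to be covered. Discretizing that range geometrically into $O(\log n)$ shells, applying the fixed-$r$ bound on each with $\eta$ equal to a small constant, and union-bounding loses at most a logarithmic factor in the failure probability, which is absorbed into $e^{-c_4 n}$ after a mild reduction of $c_4$. The main obstacle I anticipate is the calibration of constants shell-by-shell: one must ensure that on shell $k$ the Gaussian-width remainder $c_0 r_k\rho(\alpha^*)$ plus the concentration slack is strictly dominated by $(1-c_1) + c_2\rho(\alpha^*)r_k$, which forces $c_1<1$ with a definite margin and a specific matching between $c_0$ and $c_2$. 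Once this bookkeeping is done, the three Gaussian tools---Gordon's comparison, the dual-norm width bound, and Lipschitz concentration---combine to give the linear inequality uniformly in $\Delta$, and squaring completes the proof.
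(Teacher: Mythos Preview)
The paper does not give its own proof of this lemma: it is simply quoted as Proposition~1 of \cite{Negahban12}, and the later Proposition~\ref{prop:GaussianC1} invokes that reference again for the explicit constants $\kappa_1(\bmSigma)=\tfrac14\Lambda_{\min}(\bmSigma^{1/2})$ and $\kappa_2(\bmSigma)\leq 9\Lambda_{\max}(\bmSigma^{1/2})$. So there is no ``paper's proof'' to compare against; what you have written is essentially a reconstruction of the argument in \cite{Negahban12}, and at the level of a sketch it is correct: the three ingredients (Gordon's min--max comparison to reduce to a Gaussian width, the dual-norm identification $\bigl(\|\cdot\|_{\calG,\alpha}\bigr)^*=\max_j\|\cdot_{(j)}\|_{\alpha^*}$, and Lipschitz concentration plus peeling) are exactly the right ones, and the reduction from the quadratic to the linear inequality via $(a-b)^2\geq a^2/2-b^2$ is standard.

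One point that would need to be tightened if you were writing this out in full is the Gaussian comparison you invoke to pass from $\E\max_j\|(\bmSigma^{1/2}g)_{(j)}\|_{\alpha^*}$ to $c_0\,\E\max_j\|Z_{(j)}\|_{\alpha^*}$. This does hold with $c_0=\Lambda_{\max}(\bmSigma)^{1/2}$, but it is not a one-line consequence of Slepian's lemma, since $\bmSigma^{1/2}$ mixes coordinates across groups; the clean way is to view both sides as suprema of centered Gaussian processes indexed by $(j,u)$ with $\|u\|_\alpha=1$ and apply Sudakov--Fernique after checking the increment inequality. Your peeling paragraph is also accurate in identifying where the bookkeeping lives: one needs a lower bound on $r=\|\Delta\|_{\calG,\alpha}$ on the sphere $\|\bmSigma^{1/2}\Delta\|=1$ to cap the number of shells, which comes from $\|\Delta\|_{\calG,\alpha}\geq\|\Delta\|_\infty\geq\|\Delta\|/\sqrt{p}\geq (p\,\Lambda_{\max}(\bmSigma))^{-1/2}$, giving $O(\log p)$ shells and leaving the $e^{-c_4 n}$ probability intact. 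With those two details filled in, your argument would stand on its own.
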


We have assumed \eqref{eq:groupstrconvex} in Assumption~\ref{as:design}. The above
lemma shows that this assumption holds with high probability
for the random Gaussian design. It is likely that \eqref{eq:groupstrconvex} also holds with high probability
for sub-Gaussian designs, 
as suggested by the analysis of \cite{Rudelson13}. 
Moreover, the restricted eigenvalue assumption is implied by
inequality \eqref{eq:groupstrconvex} when $n$ is large: 

\begin{lemma}\label{lm:GaussianRE}
Assume \eqref{eq:groupstrconvex} holds. Then Assumption RE$(q)$ holds 
with $\kappa(q)\geq (\kappa_1/2)^{1/2}$ if $n>c_3\kappa_3q(p_{\max}\vee \log J)$, 
where $\kappa_3=\kappa_2/\kappa_1$ and $c_3$ depends only on $w_{\max}/w_{\min}$.
\end{lemma}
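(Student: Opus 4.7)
The plan is to start from inequality \eqref{eq:groupstrconvex} in Assumption~\ref{as:design}, specialized to $\alpha=2$ (so $\alpha^*=2$), and show that on the cone $\scrC(A)$ the group-norm term in the right-hand side is dominated by $(\kappa_1/2)\|\Delta_{(A)}\|^2$ once $n$ is large enough. Then the remaining $(\kappa_1/2)\|\Delta\|^2\geq (\kappa_1/2)\|\Delta_{(A)}\|^2$ will yield the desired bound $\|\bfX\Delta\|/(\sqrt{n}\|\Delta_{(A)}\|)\geq (\kappa_1/2)^{1/2}$ simultaneously for all $|A|\leq q$ and $\Delta\in\scrC(A)$.

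The first step is to bound $\rho(2)$. With $\alpha=\alpha^*=2$, $\rho(2)=\E\{\max_j\|Z_{(j)}\|/\sqrt{n}\}$ where $\|Z_{(j)}\|^2\sim\chi^2_{p_j}$. By the standard concentration inequality for the norm of a Gaussian vector ($\|Z_{(j)}\|$ is $1$-Lipschitz in $Z_{(j)}$ with mean at most $\sqrt{p_j}$) combined with a union bound over $j\in\N_J$, I get $\E\max_j\|Z_{(j)}\|\lesssim \sqrt{p_{\max}}+\sqrt{\log J}\asymp \sqrt{p_{\max}\vee\log J}$, hence $\rho^2(2)\leq c_0(p_{\max}\vee\log J)/n$ for some absolute constant $c_0$.

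The second step exploits the cone condition. For $\Delta\in\scrC(A)$ with $|A|\leq q$,
\begin{align*}
\|\Delta\|_{\calG,2}=\sum_{j}\|\Delta_{(j)}\|
\leq \frac{1}{w_{\min}}\sum_{j}w_j\|\Delta_{(j)}\|
\leq \frac{4}{w_{\min}}\sum_{j\in A}w_j\|\Delta_{(j)}\|
\leq \frac{4w_{\max}\sqrt{q}}{w_{\min}}\|\Delta_{(A)}\|,
\end{align*}
where the last step uses Cauchy--Schwarz on $|A|\leq q$ terms. Squaring, $\|\Delta\|_{\calG,2}^{2}\leq 16(w_{\max}/w_{\min})^{2}q\,\|\Delta_{(A)}\|^{2}$.

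Plugging into \eqref{eq:groupstrconvexmain} and using $\|\Delta\|^2\geq\|\Delta_{(A)}\|^2$,
\begin{equation*}
\frac{1}{n}\|\bfX\Delta\|^2 \geq \left\{\kappa_1 - 16\kappa_2(w_{\max}/w_{\min})^{2}\,q\,\rho^2(2)\right\}\|\Delta_{(A)}\|^2.
\end{equation*}
By the bound from step one, the bracket is at least $\kappa_1/2$ provided $16c_0\kappa_2(w_{\max}/w_{\min})^2 q(p_{\max}\vee \log J)/n\leq \kappa_1/2$, i.e.\ $n>c_3\kappa_3 q(p_{\max}\vee \log J)$ with $c_3=32c_0(w_{\max}/w_{\min})^2$ and $\kappa_3=\kappa_2/\kappa_1$. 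Taking square roots gives $\kappa(q)\geq(\kappa_1/2)^{1/2}$ uniformly over all admissible $(A,\Delta)$, which is exactly Assumption RE$(q)$.

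The only delicate point is the tail bound yielding $\rho(2)\lesssim\sqrt{(p_{\max}\vee\log J)/n}$; everything else is a direct consequence of the cone definition and Cauchy--Schwarz. Since standard Gaussian concentration for $\chi^2$ maxima delivers this immediately, I do not expect any serious obstacle.
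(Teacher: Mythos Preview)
Your proposal is correct and follows essentially the same approach as the paper's proof: bound $\|\Delta\|_{\calG,2}$ on the cone $\scrC(A)$ by a constant times $\sqrt{q}\,\|\Delta_{(A)}\|$, substitute into \eqref{eq:groupstrconvexmain} together with $\|\Delta\|^2\geq\|\Delta_{(A)}\|^2$, and use the bound $\rho^2(2)\lesssim (p_{\max}\vee\log J)/n$ (the paper quotes \eqref{eq:rho2}, proved in Lemma~\ref{lm:rhostar}). The only cosmetic difference is that the paper obtains the slightly sharper cone constant $1+3w_{\max}/w_{\min}$ instead of your $4w_{\max}/w_{\min}$, but this only affects the value of $c_3$.
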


\subsection{Proof overview}\label{sec:overview}

A key step in the proof of Theorem~\ref{thm:asympknownvar} is to establish the nonasymptotic bound
for $\|\sdelta-\shdelta\|$ contained in Theorem~\ref{thm:finiteknownvar}, 
from which the main asymptotic results follow.
Before going through the details, we briefly overview the basic ideas behind the proof. 

Let $\bfX$ be a fixed design matrix 
such that \eqref{eq:groupstrconvex} holds. Put $\Delta=\sdelta-\shdelta$.
Our goal is then to find an upper bound, say $B_n$, such that
$\Prob(\|\Delta\|^2\leq B_n \mid \tdbeta)$ is close to one for a large set of $\tdbeta$; 
see \eqref{eq:finiteboundDelta} in Theorem~\ref{thm:finiteknownvar}.
Rewrite \eqref{eq:groupstrconvex} for $\alp=\alp^*=2$ as
\begin{align*}
\|\Delta\|^2 \leq \frac{1}{\kappa_1}\left\{\frac{1}{n}\|\bfX\Delta\|^2+ \kappa_2 \rho^2(2)\|\Delta\|_{\calG,2}^2 \right\},
\end{align*}
which shows that we need to control three terms, $\frn\|\bfX\Delta\|^2$, $\rho^2(2)$ and  $\|\Delta\|_{\calG,2}$.
Lemma~\ref{lm:rhostar} below provides an upper bound for $\rho(\alpha^*)$, which reduces to
\begin{equation}\label{eq:rho2}
\rho(2)\leq \left(5p_{\max}/n\right)^{1/2}+\left(4\log J /n\right)^{1/2}
\end{equation}
for $\alpha^*=2$. The bounds for the other two terms will be developed conditioning on
a few events. 
Let $\eta\in(0,1)$ and $M_2>0$ be constants. We define 
\begin{align}\label{eq:defA01}
A_{01}=A_{01}(\eta)= \left\{j\in A_0: \frac{\|\beta_{0(j)}\|}{\sqpj}> \frac{M_2}{r_n(1-\eta)}\right\}
\end{align}
and $A_{02}=A_0\setminus A_{01}$,
\begin{align}
\eta_1  = \sup_{j \in A_{01}} \frac{\|\tdbeta_{(j)}-\beta_{0(j)} \|}{\| \beta_{0(j)}\|},  \label{eq:difftdbeta}
\end{align}
and $\scrB_0\subset\R^p$ by
\begin{equation}\label{eq:defB0}
\scrB_0=\{\tdbeta\in\R^p:G(\tdbeta)\subset A_{01} \text{ and } \eta_1 \leq \eta\}.
\end{equation}
Consider events
\begin{align}
\calE^* &=\cap_{j=1}^J\{\|U^*_{(j)}\|\leq w_j \lambda/2\},  \label{eq:defcalE*}\\
E^*_{t} &=\{\shdelta \in \scrD(tM_2)\} \text{ for }t \in (0,1], \label{eq:defE*t}\\
E_2&=\{\sdelta\in \scrD(M_2)\}, \label{eq:defE2}
\end{align}
where $\scrD(c)$ is defined in \eqref{eq:defdeltaregion}.
For a fixed $\bfX$, the first two events are in the probability space for $U^*$ 
while $E_2$ is in the joint probability space for $U^*$ and $y$. 
By Theorem~\ref{thm:grouplasso}, $\|\Delta\|_{\calG,2}$ can be bounded using \eqref{eq:boundG2norm}
on the event $\{\tdbeta \in \scrB_0\}\cap \calE^*$. We will find an upper bound for $\frn\|\bfX\Delta\|^2$
in Lemma \ref{lm:boundDeltaforbeta} on the event $\{\tdbeta \in \scrB_0\}\cap E^*_1 \cap E_2$.
Putting together, for $\tdbeta \in \scrB_0$ we have
\begin{equation*}
\Prob(\|\Delta\|^2\leq B_n \mid \tdbeta) \geq \Prob(\calE^* \cap E^*_1 \cap E_2 \mid \tdbeta)
\geq \Prob(\calE^*\cap E^*_t \cap E_2  \mid \tdbeta)
\end{equation*}
for every $t\in(0,1)$. By Lemma~\ref{lm:eventsubset}, the last conditional probability 
reduces to $\Prob(\calE^*\cap E^*_t)$. This leads to the nonasymptotic bound in Theorem~\ref{thm:finiteknownvar}.
We further show that $B_n=o(1)$ and $\Prob(\calE^*\cap E^*_t)\to 1$ under our
asymptotic framework to establish the main results.

The upper bound for $\frn\|\bfX(\sdelta-\shdelta)\|^2$ is established by next two lemmas, of which
the first is a special case of Lemma~\ref{lm:boundVforbeta_alpha}. 
Define
\begin{equation}\label{eq:defrho}  
\eta_2  = \sup_{j\in A_{01}} \frac{M_2\sqpj}{r_n \|\beta_{0(j)}\|},
\end{equation}
where $M_2$ is the same constant as in \eqref{eq:defE*t} and \eqref{eq:defE2}.
By definition $\eta_1+\eta_2<1$ for $\tdbeta\in\scrB_0$.

\begin{lemma}\label{lm:boundVforbeta}
For any $\tdbeta\in\scrB_0$, $u\in \R^p$ and $\delta\in \scrD(M_2)$, we have
\begin{align}\label{eq:boundVforbeta}
& |V(\delta; \tdbeta, u) - V(\delta;\beta_0,u)| \nonumber\\
& \quad\quad\quad\quad\leq \frac{M_2 n \lambda (\eta_1+\eta_2)}{r_n(1-\eta_1-\eta_2)}
\sum_{A_{01}} w_j \sqpj I(p_j>1) + 2 n\lambda \sum_{A_{02}} w_j \|\beta_{0(j)}\| \defi h_2. 
\end{align}
\end{lemma}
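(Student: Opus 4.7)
My plan is to expand the definition \eqref{eq:defVn} of $V(\delta;\cdot,\cdot)$ and observe that the quadratic term $\tfrac{n}{2r_n^2}\delta^{\trans}\bfC\delta$ and the linear term $-\tfrac{n}{r_n}\delta^{\trans}u$ do not depend on the centering point, so they cancel upon subtraction. What remains is
\begin{equation*}
V(\delta;\tdbeta,u)-V(\delta;\beta_0,u) \;=\; n\lambda\sum_{j=1}^J w_j\,\Delta_j,\qquad
\Delta_j:=\bigl(\|\tdbeta_{(j)}+r_n^{-1}\delta_{(j)}\|-\|\tdbeta_{(j)}\|\bigr)-\bigl(\|\beta_{0(j)}+r_n^{-1}\delta_{(j)}\|-\|\beta_{0(j)}\|\bigr).
\end{equation*}
I will bound $|\Delta_j|$ group by group, partitioning $\N_J$ into $A_0^c$, $A_{02}$, $A_{01}\cap\{p_j=1\}$ and $A_{01}\cap\{p_j>1\}$.

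The first three cases are essentially bookkeeping. For $j\notin A_0$, we have $\beta_{0(j)}=0$, and since $\tdbeta\in\scrB_0$ forces $G(\tdbeta)\subset A_{01}\subset A_0$, also $\tdbeta_{(j)}=0$, so $\Delta_j=0$. For $j\in A_{02}$, still $\tdbeta_{(j)}=0$, hence $\Delta_j=r_n^{-1}\|\delta_{(j)}\|+\|\beta_{0(j)}\|-\|\beta_{0(j)}+r_n^{-1}\delta_{(j)}\|$, and two triangle-inequality estimates give $0\leq \Delta_j\leq 2\|\beta_{0(j)}\|$; summation yields the second term of $h_2$. For $j\in A_{01}$ with $p_j=1$, the definition of $A_{01}$ in \eqref{eq:defA01} together with $\eta_1\leq\eta$ implies $|\tdbeta_j|,|\beta_{0j}|>M_2/r_n$, while $\delta\in\scrD(M_2)$ gives $|r_n^{-1}\delta_j|\leq M_2/r_n$. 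Consequently $\tdbeta_j$, $\beta_{0j}$, $\tdbeta_j+r_n^{-1}\delta_j$ and $\beta_{0j}+r_n^{-1}\delta_j$ all share a common sign, so $\|x+r_n^{-1}\delta_j\|-\|x\|=\sgn(x)\,r_n^{-1}\delta_j$ for $x\in\{\tdbeta_j,\beta_{0j}\}$, and the two brackets in $\Delta_j$ cancel exactly.

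The technical heart is $j\in A_{01}$ with $p_j>1$, where I exploit the smoothness of the Euclidean norm away from the origin. Setting $a=\beta_{0(j)}$, $b=\tdbeta_{(j)}-\beta_{0(j)}$, $c=r_n^{-1}\delta_{(j)}$ and $\phi(t,s)=\|a+tb+sc\|$, the fundamental theorem of calculus applied twice yields
\begin{equation*}
\Delta_j \;=\; \|a+b+c\|-\|a+b\|-\|a+c\|+\|a\| \;=\; \int_0^1\!\!\int_0^1 \partial_t\partial_s\phi(t,s)\,dt\,ds.
\end{equation*}
Direct differentiation gives $\partial_t\partial_s\phi(t,s)=\|a+tb+sc\|^{-1}\langle b-\langle v,b\rangle v,\,c\rangle$ with $v=(a+tb+sc)/\|a+tb+sc\|$, and Cauchy--Schwarz on the orthogonal complement of $v$ produces $|\partial_t\partial_s\phi|\leq \|b\|\|c\|/\|a+tb+sc\|$. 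By $\tdbeta\in\scrB_0$ and $\delta\in\scrD(M_2)$, together with the definitions of $\eta_1$ and $\eta_2$, we have $\|b\|\leq\eta_1\|\beta_{0(j)}\|$, $\|c\|\leq M_2\sqpj/r_n\leq\eta_2\|\beta_{0(j)}\|$, and $\min_{t,s\in[0,1]}\|a+tb+sc\|\geq(1-\eta_1-\eta_2)\|\beta_{0(j)}\|$. Plugging in yields $|\Delta_j|\leq \eta_1 M_2\sqpj/[r_n(1-\eta_1-\eta_2)]\leq M_2(\eta_1+\eta_2)\sqpj/[r_n(1-\eta_1-\eta_2)]$, and summing $n\lambda w_j|\Delta_j|$ over this block produces the first term of $h_2$.

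The only real obstacle is this last case: a naive first-order Lipschitz bound such as $|\Delta_j|\leq 2\|b\|$ would lose the crucial small factor $\eta_1+\eta_2$ and give a bound of order $\sqpj$, which is too large for the downstream asymptotic analysis. Exploiting the second-order cross derivative $\partial_t\partial_s\phi$---equivalently, the fact that the Hessian of the Euclidean norm has operator norm at most $1/\|x\|$ on the orthogonal complement of $x$---is precisely what extracts the smallness of $\|b\|$ and $\|c\|$ simultaneously, and this is the one step where the condition $\eta_1+\eta_2<1$ inherent in $\tdbeta\in\scrB_0$ must be used in full.
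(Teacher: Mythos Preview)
Your proof is correct and follows essentially the same second-order strategy as the paper, which deduces the lemma as the $\alpha=2$ case of Lemma~\ref{lm:boundVforbeta_alpha}. The only difference is implementation: for $j\in A_{01}$ with $p_j>1$, the paper applies the mean value theorem twice---once to write each bracket as $[\nabla f(\xi)]^\trans r_n^{-1}\delta_{(j)}$, and again to bound $\|\nabla f(\xi_j)-\nabla f(\xi_j^*)\|$ via the Hessian eigenvalue bound $\Lambda_{\max}(\bfH_f(x))\leq (\alpha-1)/\|x\|_\alpha$ from Lemma~\ref{lm:Hessian}---whereas you write $\Delta_j$ as a double integral of the mixed partial $\partial_t\partial_s\phi$ and compute that mixed partial explicitly for the Euclidean norm. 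Your explicit computation in fact yields the slightly sharper numerator $\eta_1$ before you weaken it to $\eta_1+\eta_2$; the paper's route through the general Hessian lemma is what buys the extension to all $\alpha\in[2,\infty)$. Your separate sign-analysis for $p_j=1$ is likewise just an explicit version of what the paper obtains from the $I(m>1)$ factor in \eqref{eq:boundHf}.
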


\begin{lemma}\label{lm:boundDeltaforbeta}
Recall $\shdelta$ and $\sdelta$ defined in \eqref{eq:defshdelta} and \eqref{eq:defsdelta}. 
If $\tdbeta\in\scrB_0$ and $\sdelta,\shdelta\in \scrD(M_2)$, then
\begin{equation}\label{eq:boundDeltaforbeta}
\frac{1}{n}\|\bfX(\sdelta-\shdelta)\|^2 \leq 4 r_n^2 h_2/n.
\end{equation}
\end{lemma}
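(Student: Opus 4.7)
The approach exploits the additive decomposition $V(\delta;\beta,u) = Q(\delta;u) + P(\delta;\beta)$ visible from \eqref{eq:defVn}, where the quadratic part $Q(\delta;u) = \frac{1}{2 r_n^2}\|\bfX\delta\|^2 - \frac{n}{r_n}\delta^\trans u$ does not depend on $\beta$ and the penalty $P(\delta;\beta) = n\lambda\sum_{j} w_j(\|\beta_{(j)} + r_n^{-1}\delta_{(j)}\| - \|\beta_{(j)}\|)$ is convex in $\delta$. The whole dependence of $V$ on the centering vector sits in $P$, and Lemma~\ref{lm:boundVforbeta} already quantifies the shift in $V$ when $\beta_0$ is replaced by $\tdbeta$, uniformly over $\delta\in\scrD(M_2)$.

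\textbf{Two-point strong convexity.} Because $Q$ is quadratic, its exact Taylor expansion gives
\begin{equation*}
Q(\sdelta; U^*) - Q(\shdelta; U^*) = \langle \nabla Q(\shdelta;U^*),\, \sdelta - \shdelta\rangle + \frac{1}{2 r_n^2}\|\bfX(\sdelta-\shdelta)\|^2.
\end{equation*}
The first-order optimality condition for $\shdelta$ in \eqref{eq:defshdelta} reads $-\nabla Q(\shdelta;U^*) \in \partial_\delta P(\shdelta;\beta_0)$, and the convexity of $P(\cdot;\beta_0)$ then yields $P(\sdelta;\beta_0)-P(\shdelta;\beta_0) \geq \langle -\nabla Q(\shdelta;U^*),\, \sdelta-\shdelta\rangle$. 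Adding this to the quadratic identity gives
\begin{equation*}
V(\sdelta;\beta_0,U^*) - V(\shdelta;\beta_0,U^*) \;\geq\; \frac{1}{2 r_n^2}\|\bfX(\sdelta-\shdelta)\|^2.
\end{equation*}
By the symmetric argument applied to $\sdelta$, which minimizes $V(\cdot;\tdbeta,U^*)$,
\begin{equation*}
V(\shdelta;\tdbeta,U^*) - V(\sdelta;\tdbeta,U^*) \;\geq\; \frac{1}{2 r_n^2}\|\bfX(\sdelta-\shdelta)\|^2.
\end{equation*}

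\textbf{Telescoping and conclusion.} Adding the two inequalities and rearranging the left-hand sides as a telescope in the $\beta$ slot of $V$,
\begin{equation*}
\frac{1}{r_n^2}\|\bfX(\sdelta-\shdelta)\|^2 \;\leq\; \bigl[V(\sdelta;\beta_0,U^*)-V(\sdelta;\tdbeta,U^*)\bigr] + \bigl[V(\shdelta;\tdbeta,U^*)-V(\shdelta;\beta_0,U^*)\bigr].
\end{equation*}
Since $\tdbeta\in\scrB_0$ and $\sdelta,\shdelta\in\scrD(M_2)$, Lemma~\ref{lm:boundVforbeta} bounds each bracket on the right by $h_2$ in absolute value, so $\|\bfX(\sdelta-\shdelta)\|^2 \leq 2 r_n^2 h_2$, and dividing by $n$ gives the claimed \eqref{eq:boundDeltaforbeta} with plenty of room inside the constant $4$. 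The main thing to be careful about is that $V$ is not strongly convex in $\delta$ itself when $p>n$, since $\bfX$ has a nontrivial null space; the strong convexity must be phrased in the semi-norm $\delta\mapsto\|\bfX\delta\|$, which is exactly what the quadratic identity for $Q$ supplies. Non-uniqueness of the minimizers in \eqref{eq:defshdelta}--\eqref{eq:defsdelta} is harmless because the inequality only constrains $\bfX(\sdelta-\shdelta)$ and uses only the subgradient inclusion at the selected minimizers.
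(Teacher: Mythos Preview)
Your proof is correct and follows essentially the same approach as the paper: both combine Lemma~\ref{lm:boundVforbeta} with the quadratic lower bound coming from the convexity of $V$ at a minimizer (the paper packages this as Lemma~\ref{lm:convexity}). The only difference is that the paper applies the convexity inequality once at $\sdelta$ and uses bare minimality of $\shdelta$ for $V(\cdot;\beta_0,U^*)$, arriving at the constant $4$, whereas you apply the convexity inequality symmetrically at both minimizers before telescoping, which keeps both quadratic gaps and yields the sharper constant $2$.
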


Suppose $\eta\in(0,1)$ is sufficiently small. The assumptions on $\beta_0$ and $\tdbeta$ 
can be expressed in a more explicit way to show that the estimate $\tdbeta\in\scrB_0$ is close to $\beta_0$.
The subset $A_{01}$ contains active groups with a large $\ell_2$ norm and 
$\|\tdbeta_{(j)}-\beta_{0(j)}\|\leq \eta \|\beta_{0(j)}\|$ for $j\in A_{01}$. For $j\notin A_{01}$,
by definition $\|\beta_{0(j)}\|$ is small or zero, and
$\tdbeta_{(j)}=0$ according to \eqref{eq:defB0}.

\begin{lemma}\label{lm:eventsubset}
Assume that $q_0 (p_{\max} \vee \log J) \ll \sqn$, $r_n=O(\sqn)$ and $r_n^2 h_2/n=o(p_{\min})$. 
Choose $\lambda=O(\{(p_{\max} \vee \log J)/n\}^{1/2})$.
Suppose that \eqref{eq:groupstrconvex} holds with universal constants $(\kappa_1,\kappa_2)$ when $n$ is large. 
Then for every $t\in(0,1)$, there is $N_1$ such that 
\begin{equation}\label{eq:inclusion}
\{\tdbeta \in \scrB_0\} \cap \calE^* \cap E^*_t \subset E_2
\end{equation}
when $n>N_1$.
\end{lemma}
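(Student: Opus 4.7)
The plan is to argue by contradiction using a peeling argument along the line segment joining $\shdelta$ and $\sdelta$. Suppose $\tdbeta\in\scrB_0$ and that $\calE^*$ and $E^*_t$ both hold, but that $\sdelta\notin\scrD(M_2)$. Since $\scrD(M_2)$ is convex and $\shdelta\in\scrD(tM_2)$ lies strictly in its interior (as $t<1$), the segment $\delta_s\defi\shdelta+s(\sdelta-\shdelta)$, $s\in[0,1]$, must cross $\partial\scrD(M_2)$ at some $s^*\in(0,1]$ with $\delta_{s^*}\in\scrD(M_2)$. My goal is to derive a bound on $\|\delta_{s^*}-\shdelta\|$ that is small enough to force $\delta_{s^*}$ into the interior of $\scrD(M_2)$, producing a contradiction.

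The first major step is to obtain an analogue of Lemma~\ref{lm:boundDeltaforbeta} that can be applied at $\delta_{s^*}$. Convexity of $V(\cdot;\tdbeta,U^*)$ together with the minimality of $\sdelta$ gives $V(\delta_{s^*};\tdbeta,U^*)\leq V(\shdelta;\tdbeta,U^*)$. Since both $\shdelta$ and $\delta_{s^*}$ lie in $\scrD(M_2)$ and $\tdbeta\in\scrB_0$, applying Lemma~\ref{lm:boundVforbeta} at each point yields
\begin{align*}
V(\delta_{s^*};\beta_0,U^*) - V(\shdelta;\beta_0,U^*) \leq 2h_2.
\end{align*}
On the other hand, writing $V(\cdot;\beta_0,U^*) = Q + P$ with $Q(\delta)=\frac{n}{2r_n^2}\delta^\trans\bfC\delta - \frac{n}{r_n}\delta^\trans U^*$ quadratic and $P$ the convex penalty part, the optimality condition $-\nabla Q(\shdelta)\in\partial P(\shdelta)$ together with the subgradient inequality for $P$ produces a one-sided strong-convexity bound
\begin{align*}
V(\delta_{s^*};\beta_0,U^*) - V(\shdelta;\beta_0,U^*) \geq \frac{1}{2r_n^2}\|\bfX(\delta_{s^*}-\shdelta)\|^2.
\end{align*}
Combining the two gives $\frac{1}{n}\|\bfX(\delta_{s^*}-\shdelta)\|^2 \leq 4 r_n^2 h_2/n$, the analogue of Lemma~\ref{lm:boundDeltaforbeta} we need, now established without presupposing $\sdelta\in\scrD(M_2)$.

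The second step converts this prediction-error bound into an $\ell_2$ bound via \eqref{eq:groupstrconvex}, which requires a control on $\|\delta_{s^*}-\shdelta\|_{\calG,2}$. Both $\hbeta^*$ (minimizing the group lasso with truth $\beta_0$) and $\sbeta$ (with truth $\tdbeta$, and $|G(\tdbeta)|\leq q_0$ since $\tdbeta\in\scrB_0$) are group lasso fits on the event $\calE^*$, so Theorem~\ref{thm:grouplasso} combined with Lemma~\ref{lm:GaussianRE} yields $\|\shdelta\|_{\calG,2},\|\sdelta\|_{\calG,2}=O(r_n h_1)$ and hence $\|\delta_{s^*}-\shdelta\|_{\calG,2} = O(r_n h_1)$. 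Inequality \eqref{eq:groupstrconvex} then gives
\begin{align*}
\kappa_1 \|\delta_{s^*}-\shdelta\|^2 \leq \frac{4 r_n^2 h_2}{n} + O\bigl(\rho^2(2)\, r_n^2 h_1^2\bigr).
\end{align*}
The first term is $o(p_{\min})$ by hypothesis. For the second, substituting $\rho^2(2)=O((p_{\max}\vee\log J)/n)$ from \eqref{eq:rho2}, $h_1=O(\lambda q_0)$ from Theorem~\ref{thm:grouplasso}, and $r_n=O(\sqn)$, it reduces to $O(q_0^2(p_{\max}\vee\log J)^2/n)=o(1)$ by the sparsity scaling \eqref{eq:sparsescaling}, and hence also $o(p_{\min})$ since $p_{\min}\geq 1$. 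Because $p_{\max}/p_{\min}\asymp 1$, this gives $\|(\delta_{s^*}-\shdelta)_{(j)}\|\leq\|\delta_{s^*}-\shdelta\|=o(\sqpj)$ uniformly in $j\in A_0$. Combining with $\|\shdelta_{(j)}\|\leq tM_2\sqpj$ forces $\|(\delta_{s^*})_{(j)}\|<M_2\sqpj$ for every $j\in A_0$ once $n$ exceeds some $N_1$, contradicting $\delta_{s^*}\in\partial\scrD(M_2)$.

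The main obstacle is a potential circularity in applying Lemma~\ref{lm:boundDeltaforbeta}: its conclusion requires $\sdelta\in\scrD(M_2)$, which is precisely the inclusion we want to prove. The peeling argument resolves this, because at the boundary-crossing point $\delta_{s^*}\in\scrD(M_2)$ the containment hypothesis of Lemma~\ref{lm:boundVforbeta} holds by construction, allowing the subgradient-based strong-convexity argument to be deployed there and close the loop. A secondary care point is ensuring that the $\calG,2$-norm bound on $\delta_{s^*}-\shdelta$ is inherited from $\sdelta$ and $\shdelta$, which is immediate because both are honest group lasso minimizers on $\calE^*$ with at most $q_0$ active groups in their respective truths.
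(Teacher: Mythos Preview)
Your proof is correct and follows essentially the same approach as the paper: both argue by contradiction, pick a point on the segment between $\shdelta$ and $\sdelta$ lying on $\partial\scrD(M_2)$, invoke Lemma~\ref{lm:boundVforbeta} there together with the strong-convexity inequality (Lemma~\ref{lm:convexity}), and combine \eqref{eq:groupstrconvex} with the $\|\cdot\|_{\calG,2}$ bound from Theorem~\ref{thm:grouplasso}. The only cosmetic difference is the direction of the contradiction: the paper lower-bounds $\Delta^{\trans}\bfC\Delta$ via \eqref{eq:groupstrconvex} and derives a strict inequality in $V$ that clashes with convexity, whereas you upper-bound $\Delta^{\trans}\bfC\Delta$ first and then show $\|\Delta\|$ is too small for $\delta_{s^*}$ to sit on the boundary. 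The ingredients and order estimates are identical.

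One minor slip: you invoke $p_{\max}/p_{\min}\asymp 1$, which is \emph{not} a hypothesis of Lemma~\ref{lm:eventsubset}. Fortunately you do not need it: from $\|\delta_{s^*}-\shdelta\|^2=o(p_{\min})$ you already get $\|(\delta_{s^*}-\shdelta)_{(j)}\|\leq\|\delta_{s^*}-\shdelta\|=o(\sqrt{p_{\min}})$, and since $p_j\geq p_{\min}$ this is $o(\sqpj)$ uniformly in $j\in A_0$ without any assumption on the ratio $p_{\max}/p_{\min}$.
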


As an immediate consequence of \eqref{eq:inclusion}, for any $t\in(0,1)$ and $\tdbeta\in\scrB_0$,
\begin{equation}\label{eq:PrJointEvent}
\Prob(\calE^*\cap E^*_t \cap E_2\mid \tdbeta) = \Prob(\calE^* \cap E^*_t\mid \tdbeta) = \Prob (\calE^* \cap E^*_t)
\end{equation}
when $n>N_1$. 
Now we are ready to establish a nonasymptotic bound for the deviation $\|\sdelta-\shdelta\|$, regarding $\bfX$
as a fixed matrix that satisfies a couple assumptions.

\begin{theorem}\label{thm:finiteknownvar}
Consider the model \eqref{eq:model} with $\varepsilon \sim \dnorm_n(0,\sigma^2\bfI_n)$, 
$n \geq 1$ and $J\geq 2$. Assume $|G(\beta_0)|\leq q$. 
Suppose that Assumption RE$(q)$ and \eqref{eq:groupstrconvex} hold. 
Define $\shdelta$ and $\sdelta$ by \eqref{eq:defshdelta} and \eqref{eq:defsdelta}, respectively, 
with $\varepsilon^* \sim \dnorm_n(0,\sigma^2\bfI_n)$. If \eqref{eq:inclusion} holds for some $t\in(0,1)$, 
then on the event that $\{\tdbeta\in\scrB_0\}$ we have
\begin{equation}\label{eq:finiteboundDelta}
\Prob\left\{\|\sdelta-\shdelta\|^2\leq \frac{4r_n^2}{\kappa_1 n}\left[h_2+\kappa_2n\rho^2(2)h_1^2\right] 
\left|\, \tdbeta\right.\right\}\geq \Prob(\calE^* \cap E^*_t),
\end{equation}
where $h_1$ and $h_2$ are defined in \eqref{eq:boundG2norm} and \eqref{eq:boundVforbeta}, respectively.
\end{theorem}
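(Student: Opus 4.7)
The plan is to bound $\|\sdelta-\shdelta\|^2$ deterministically on the event $\{\tdbeta\in\scrB_0\}\cap\calE^*\cap E^*_t$ and then convert this into the stated conditional probability statement using the independence of the bootstrap noise from $\tdbeta$. Set $\Delta=\sdelta-\shdelta$. The entry point is the inequality \eqref{eq:groupstrconvexmain} of Assumption \ref{as:design} applied to $\Delta$ with $\alpha=\alpha^*=2$,
\begin{equation*}
\|\Delta\|^2 \leq \frac{1}{\kappa_1}\left[\frac{1}{n}\|\bfX\Delta\|^2 + \kappa_2\,\rho^2(2)\,\|\Delta\|_{\calG,2}^2\right],
\end{equation*}
so it suffices to bound the two terms in the brackets.

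For the group-norm term, I would apply Theorem \ref{thm:grouplasso} on $\calE^*$ twice. First to $\hbeta^*$, viewed as a group lasso solution for $y=\bfX\beta_0+\varepsilon^*$; since $|A_0|\leq q$ and Assumption RE($q$) holds, this yields $\|\hbeta^*-\beta_0\|_{\calG,2}\leq h_1$. Second to $\sbeta$, a group lasso solution for $y=\bfX\tdbeta+\varepsilon^*$: because $\tdbeta\in\scrB_0$ forces $G(\tdbeta)\subset A_{01}\subset A_0$, the group support of $\tdbeta$ has size at most $q$, so Theorem \ref{thm:grouplasso} again yields $\|\sbeta-\tdbeta\|_{\calG,2}\leq h_1$ (the sum defining $h_1$ can only shrink when $A_0$ is replaced by $G(\tdbeta)$). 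By the triangle inequality,
\begin{equation*}
\|\Delta\|_{\calG,2} \leq \|\sdelta\|_{\calG,2} + \|\shdelta\|_{\calG,2} = r_n\|\sbeta-\tdbeta\|_{\calG,2} + r_n\|\hbeta^*-\beta_0\|_{\calG,2} \leq 2 r_n h_1,
\end{equation*}
so $\|\Delta\|_{\calG,2}^2\leq 4r_n^2 h_1^2$, which is the factor appearing with $\rho^2(2)$ in the target bound.

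For the design term, I would invoke Lemma \ref{lm:boundDeltaforbeta}. Its two hypotheses are $\tdbeta\in\scrB_0$, which is given, and $\sdelta,\shdelta\in\scrD(M_2)$. The inclusion hypothesis \eqref{eq:inclusion} in the theorem provides $\sdelta\in\scrD(M_2)$, since $\{\tdbeta\in\scrB_0\}\cap\calE^*\cap E^*_t\subset E_2$, and the definition $\scrD(tM_2)\subset\scrD(M_2)$ for $t\in(0,1]$ gives $\shdelta\in\scrD(M_2)$ on $E^*_t$. The lemma then delivers $\frac{1}{n}\|\bfX\Delta\|^2\leq 4 r_n^2 h_2/n$. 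Substituting both estimates into the restricted-strong-convexity inequality above produces exactly $\|\Delta\|^2\leq \frac{4 r_n^2}{\kappa_1 n}[h_2+\kappa_2 n\rho^2(2) h_1^2]$.

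It remains to pass from this deterministic bound to the probability statement. Because $\bfX$ is fixed and $\varepsilon^*$ is generated independently of $(y,\tdbeta)$, the events $\calE^*$ and $E^*_t$ depend only on $\varepsilon^*$, so $\Prob(\calE^*\cap E^*_t\mid \tdbeta)=\Prob(\calE^*\cap E^*_t)$. Combined with the inclusion \eqref{eq:inclusion}, on the event $\{\tdbeta\in\scrB_0\}$ the conditional probability that the bound holds is at least $\Prob(\calE^*\cap E^*_t)$, as claimed. The only point requiring genuine care is the dual use of Theorem \ref{thm:grouplasso} with two different ``signal'' vectors $\beta_0$ and $\tdbeta$; one must notice that $\scrB_0$ was defined precisely so that $G(\tdbeta)\subset A_0$, which keeps the RE constant $\kappa(q)$ and the Gaussian deviation event $\calE^*$ the same across both applications. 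Everything else is assembly, with the bulk of the technical work already absorbed into Lemmas \ref{lm:boundVforbeta}--\ref{lm:eventsubset}.
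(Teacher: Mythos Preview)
Your proof is correct and follows essentially the same approach as the paper's. The only organizational difference is that the paper first establishes the deterministic bound on $\{\tdbeta\in\scrB_0\}\cap\calE^*\cap E^*_1\cap E_2$ and then uses $E^*_t\subset E^*_1$ together with \eqref{eq:PrJointEvent} to reach $\Prob(\calE^*\cap E^*_t)$, whereas you work directly on $\{\tdbeta\in\scrB_0\}\cap\calE^*\cap E^*_t$ via the inclusion \eqref{eq:inclusion}; this is a cosmetic streamlining, not a different argument.
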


Our proof of Theorem~\ref{thm:asympknownvar} is based on Theorem~\ref{thm:finiteknownvar}. 
We will show that the assumptions
for \eqref{eq:finiteboundDelta} are satisfied with high probability
and then derive the desired conclusions from there. 
For the detailed proof, see Section~\ref{sec:proofthmknvar}.

\begin{remark}\label{rm:orderofupperbound}
In step 2 of the proof of Theorem~\ref{thm:asympknownvar},
the exact order of the upper bound in \eqref{eq:finiteboundDelta} is derived: 
Conditioning on a large set of $(\tdbeta,\bfX)$,
\begin{equation}\label{eq:exactorder}
\|\sdelta-\shdelta\|^2=O_p\left\{\frac{b_n}{\sqn}\sup_{S_1}\frac{\sqpj}{\|\beta_{0(j)}\|}
+b_n\sqn \sup_{S_2}\frac{\|\beta_{0(j)}\|}{\sqpj}
+\frac{q_0^2(p_{\max}\vee\log J)^2}{n}\right\},
\end{equation}
which under Assumption~\ref{as:betasignal} implies $\|\sdelta-\shdelta\|=o_p(1)$.
\myrev{Under orthogonal designs, \eqref{eq:groupstrconvex} holds with $\kappa_1=1$ and $\kappa_2=0$.
Then the upper bound can be improved to $\|\sdelta-\shdelta\|^2\leq 4r_n^2h_2/n$, and consequently,
the last term on the right side of \eqref{eq:exactorder} drops.}
\end{remark}

\subsection{Proofs of auxiliary results}\label{sec:apxproofs_sec2}

\begin{proof}[Proof of Lemma~\ref{lm:GaussianRE}]
For $\Delta \in \scrC(A)$ \eqref{eq:conedef},
\begin{equation*}
\|\Delta\|_{\calG,2} \leq (1+3w_{\max}/w_{\min}){|A|^{1/2}} \|\Delta_{(A)}\|
\end{equation*}
and therefore
\begin{equation*}
\frac{1}{n}\|\bfX\Delta\|^2 \geq 
\left[\kappa_1  - \kappa_2(1+3w_{\max}/w_{\min})^2 \rho^2(2)|A|\right] \|\Delta_{(A)}\|^2.
\end{equation*}
This implies 
\begin{equation*}
\kappa^2(q) \geq \kappa_1  - \kappa_2(1+3w_{\max}/w_{\min})^2 \rho^2(2)q,
\end{equation*}
where $\kappa(q)$ is defined in \eqref{eq:REdef}.
With \eqref{eq:rho2} we can bound $\kappa(q)$
from below, $\kappa(q)\geq (\kappa_1/2)^{1/2}$, as long as $n>c_3\kappa_3 q(p_{\max}\vee \log J)$, 
where $\kappa_3=\kappa_2/\kappa_1$ and $c_3$ is a constant that only depends on $w_{\max}/w_{\min}$.
\end{proof}

We prove an inequality that will be used in the proofs of a few results.

\begin{lemma}\label{lm:convexity}
Let $\gamma$ be any minimizer of $V(\delta; \tdbeta, u)$ for $\tdbeta\in\R^p$ and $u\in\R^p$. 
Then for any $\Delta \in \R^p$,
\begin{equation}\label{eq:convexlowbound}
V(\gamma+\Delta; \tdbeta, u)-V(\gamma; \tdbeta, u)\geq \frac{n}{2r_n^2} \Delta^{\trans} \bfC \Delta.
\end{equation}
\end{lemma}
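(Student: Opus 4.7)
The plan is to exploit the fact that $V(\cdot;\tdbeta,u)$ splits into a convex quadratic and a non-smooth convex remainder, and then apply the subgradient optimality condition at the minimizer $\gamma$. Concretely, write
\begin{equation*}
V(\delta;\tdbeta,u)=Q(\delta)+R(\delta),\qquad Q(\delta)=\frac{n}{2r_n^2}\delta^{\trans}\bfC\delta,
\end{equation*}
with $R(\delta)=-\frac{n}{r_n}\delta^{\trans}u+n\lambda\sum_{j}w_j\bigl(\|\tdbeta_{(j)}+\delta_{(j)}/r_n\|-\|\tdbeta_{(j)}\|\bigr)$. The function $R$ is convex: the first term is affine in $\delta$, and each summand of the penalty is a composition of the Euclidean norm with an affine map of $\delta$, plus a constant.

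For the smooth piece $Q$, a quadratic has an exact second-order expansion,
\begin{equation*}
Q(\gamma+\Delta)=Q(\gamma)+\nabla Q(\gamma)^{\trans}\Delta+\frac{n}{2r_n^2}\Delta^{\trans}\bfC\Delta,
\end{equation*}
with $\nabla Q(\gamma)=\frac{n}{r_n^2}\bfC\gamma$. For the non-smooth piece $R$, I would use the first-order optimality condition. Since $\gamma$ minimizes $V=Q+R$ and $Q$ is differentiable, there is a subgradient $g\in\partial R(\gamma)$ with $g=-\nabla Q(\gamma)$. Convexity of $R$ then gives
\begin{equation*}
R(\gamma+\Delta)\geq R(\gamma)+g^{\trans}\Delta=R(\gamma)-\nabla Q(\gamma)^{\trans}\Delta.
\end{equation*}

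Adding the two displays, the linear terms $\pm\nabla Q(\gamma)^{\trans}\Delta$ cancel and the bound \eqref{eq:convexlowbound} follows immediately. There is no genuine obstacle: $\bfC$ need only be positive semidefinite (the inequality is valid even in the high-dimensional regime $n<p$ where $\bfC$ is singular), and the subdifferential calculus for the group penalty is standard, so no further structural assumption on $\tdbeta$ or $u$ is needed. The only thing to be slightly careful about is verifying that $R$ really is convex when $\tdbeta\in A_0^c$ contributes a nonzero constant shift $\|\tdbeta_{(j)}\|$—but these are just constants in $\delta$ and do not affect convexity.
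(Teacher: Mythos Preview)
Your proof is correct and follows essentially the same route as the paper: both arguments expand the quadratic part exactly, invoke the first-order optimality (subgradient/KKT) condition at $\gamma$ to obtain a subgradient of the penalty piece, apply the subgradient inequality for the convex remainder, and add so that the linear terms cancel. The paper simply writes out the KKT condition and the group-norm subgradient explicitly, whereas you phrase it more abstractly via $g=-\nabla Q(\gamma)\in\partial R(\gamma)$; the underlying mathematics is identical.
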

\begin{proof}
Let $b=r_n^{-1}\gamma + \tdbeta$. Direct calculation gives
\begin{align*}
& V(\gamma+\Delta; \tdbeta, u)-V(\gamma; \tdbeta, u) \\
& \quad =  \frac{n}{2r_n^2} \Delta^{\trans} \bfC (\Delta + 2 \gamma)- \frac{n}{r_n}\Delta^{\trans} u 
 + n\lambda \sum_{j=1}^J w_j \left(\| b_{(j)}+ r_n^{-1}\Delta_{(j)}\|- \| b_{(j)}\|\right).
\end{align*}
The KKT condition for $\gamma$ to minimize $V(\delta;\tdbeta,u)$ is
\begin{align}\label{eq:KKTminV}
r_n^{-1}\bfC\gamma + \lambda \bfW s -u =0,
\end{align}
where $s\in\R^p$ and $s_{(j)}$ is a subgradient of $\|\beta_{(j)}\|$ at $b_{(j)}$. 
By the definition of a subgradient,
\begin{align}\label{eq:subgradineq}
\| b_{(j)}+ r_n^{-1}\Delta_{(j)}\|-\|b_{(j)}\| \geq r_n^{-1} \Delta_{(j)}^{\trans} s_{(j)}
\end{align}
for all $j=1,\ldots,J$. Now we have
\begin{align*}
& V(\gamma+\Delta; \tdbeta, u)-V(\gamma; \tdbeta, u) \\
& \quad \geq \frac{n}{2r_n^2} \Delta^{\trans} \bfC \Delta + \frac{n}{r_n^2} \Delta^{\trans} \bfC \gamma
- \frac{n}{r_n}\Delta^{\trans} u +\frac{n\lambda}{r_n} \sum_{j=1}^J w_j \Delta_{(j)}^{\trans} s_{(j)}  \\
& \quad = \frac{n}{2r_n^2} \Delta^{\trans} \bfC \Delta + \frac{n}{r_n} \Delta^{\trans} 
\left(r_n^{-1}\bfC\gamma - u + \lambda \bfW s \right) = \frac{n}{2r_n^2} \Delta^{\trans} \bfC \Delta,
\end{align*}
where we have used \eqref{eq:subgradineq} and \eqref{eq:KKTminV}.
\end{proof}

\begin{proof}[Proof of Lemma~\ref{lm:boundVforbeta}]
This follows as a special case of Lemma~\ref{lm:boundVforbeta_alpha}.
\end{proof}

\begin{proof}[Proof of Lemma~\ref{lm:boundDeltaforbeta}]
By Lemma~\ref{lm:boundVforbeta},
\begin{align*}
|V(\delta;\tdbeta,U^*)-V(\delta;\beta_0,U^*)| &\leq h_2, \text{ for } \delta\in\{\sdelta,\shdelta\}.
\end{align*}
Let $\Delta=\shdelta-\sdelta$. We have
\begin{align}\label{eq:diffofV}
V(\shdelta;\beta_0,U^*) & \geq V(\shdelta;\tdbeta,U^*) - h_2 \nonumber\\
& \geq V(\sdelta;\tdbeta,U^*) + \frac{n}{2r_n^2} \Delta^{\trans} \bfC \Delta -h_2 \nonumber\\
& \geq V(\sdelta;\beta_0,U^*)+ \frac{n}{2r_n^2} \Delta^{\trans} \bfC \Delta -2h_2, 
\end{align}
where the second inequality is due to Lemma~\ref{lm:convexity}. 
Since $V(\sdelta;\beta_0,U^*)\geq V(\shdelta;\beta_0,U^*)$ by definition \eqref{eq:defshdelta}, 
the desired inequality follows. 
\end{proof}

\begin{proof}[Proof of Lemma~\ref{lm:eventsubset}]
We prove by contradiction. 
Condition on $\{\tdbeta \in \scrB_0\} \cap \calE^* \cap E^*_t$, and suppose $\sdelta\notin \scrD(M_2)$,
that is, $(E_2)^c$ happens. 
For $x\in\R^p$, define function 
\begin{equation*}
f(x)= \sup_{j\in A_0}\|x_{(j)}\|/\sqpj,
\end{equation*} 
which is subadditive and continuous. 
By our hypothesis, $f(\sdelta)>M_2$. Thus,
there is $c \in (0,1)$ such that $f(c\shdelta+ (1-c)\sdelta)=M_2$, since $f(\shdelta)\leq tM_2<M_2$. 
Let $\gamma=c\shdelta + (1-c)\sdelta$. By \eqref{eq:defsdelta},
\begin{equation}\label{eq:Vgamma1}
V(\gamma; \tdbeta,U^*) \geq V(\sdelta; \tdbeta,U^*).
\end{equation}
Since $\tdbeta \in \scrB_0$, \eqref{eq:boundVforbeta} applies to both $\gamma$ and $\shdelta$. 
Using a similar argument as in \eqref{eq:diffofV}, we can show that 
\begin{equation*}
V(\gamma;\tdbeta,U^*) \geq V(\shdelta;\tdbeta,U^*)+ \left(\frac{n}{2r_n^2} \Delta^{\trans} \bfC \Delta -2h_2\right),
\end{equation*}
where $\Delta=\gamma-\shdelta=(1-c)(\sdelta-\shdelta)$. If $\Delta^{\trans} \bfC \Delta > 4r_n^2h_2/n$, then
\begin{equation}\label{eq:Vgamma2}
V(\gamma; \tdbeta,U^*) > V(\shdelta; \tdbeta,U^*).
\end{equation}
Combining \eqref{eq:Vgamma1} and \eqref{eq:Vgamma2},
\begin{align*}
V(\gamma; \tdbeta,U^*)  > c V(\shdelta; \tdbeta,U^*) + (1-c) V(\sdelta; \tdbeta,U^*)
 \geq V(\gamma; \tdbeta,U^*),
\end{align*}
where the second inequality is due to the convexity of $V$. This leads to a contradiction. Therefore, it remains to show $\Delta^{\trans} \bfC \Delta > 4r_n^2h_2/n$ when $n$ is greater than some $N_1$. According to \eqref{eq:groupstrconvex}, a sufficient condition for this is
\begin{equation}\label{eq:liminfsuff}
\liminf_{n\to\infty} \left\{\kappa_1 \|\Delta\|^2 - \kappa_2 \rho^2(2) \|\Delta\|^2_{\calG,2} -4r_n^2h_2/n\right\} >0.
\end{equation}

The first term
\begin{align*}
\kappa_1 \|\Delta\|^2 & \geq \kappa_1 p_{\min}[f(\gamma-\shdelta)]^2 \\
&\geq \kappa_1 p_{\min}[f(\gamma)-f(\shdelta)]^2\geq \kappa_1 (1-t)^2 M_2^2p_{\min},
\end{align*} 
where the subadditivity of $f$ is used for the second inequality.
The third term $4r_n^2h_2/n=o(p_{\min})$ by assumption. By Lemma~\ref{lm:GaussianRE}, \eqref{eq:groupstrconvex} implies RE$(q_0)$ when $n>c_3\kappa_3q_0(p_{\max}\vee \log J)=o(\sqn)$ 
and $\liminf_n\kappa(q_0)>0$. Since $|G(\tdbeta)|\leq |A_0|=q_0$ for $\tdbeta\in\scrB_0$, by \eqref{eq:boundG2norm} in Theorem~\ref{thm:grouplasso}, on event $\calE^*$ we have $\|\sdelta\|_{\calG,2}\leq r_n h_1$ and $\|\shdelta\|_{\calG,2}\leq r_n h_1$ with $q=q_0$, which implies
\begin{equation}\label{eq:G2norm}
\|\Delta\|_{\calG,2}\leq \|\sdelta\|_{\calG,2}+\|\shdelta\|_{\calG,2} \leq 2 r_n h_1.
\end{equation}
Since $\liminf_n\kappa(q_0)>0$, the second term
\begin{align}\label{eq:order2ndterm}
\kappa_2 \rho^2(2) \|\Delta\|^2_{\calG,2}  = O(r_n^2 \rho^2(2) \lambda^2 q_0^2) 
 =O(q_0^2(p_{\max}\vee\log J)^2/n)=o(1),
\end{align}
where we have used the facts that $r_n=O(\sqn)$ and 
both $\rho^2(2)$ and $\lambda^2$ are $O({(p_{\max}\vee\log J)/n})$. 
Therefore, \eqref{eq:liminfsuff} holds, and a contradiction is reached when $n$ is large.
\end{proof}

\begin{proof}[Proof of Theorem~\ref{thm:finiteknownvar}]
For $\tdbeta\in\scrB_0$, $|G(\tdbeta)|\leq |G(\beta_0)|\leq q$ and thus, 
by Theorem~\ref{thm:grouplasso}, $\|\sdelta\|_{\calG,2}\leq r_n h_1$ 
and $\|\shdelta\|_{\calG,2}\leq r_n h_1$ on event $\calE^*$. Let $\Delta=\sdelta-\shdelta$. 
Then, on the event $\calE^*\cap \{\tdbeta\in\scrB_0\}$, we have
$\|\Delta\|_{\calG,2}\leq 2 r_n h_1$ as in \eqref{eq:G2norm}.
By Lemma~\ref{lm:boundDeltaforbeta}, $\{\tdbeta\in\scrB_0\}\cap E_1^* \cap E_2$ implies
$\frac{1}{n}\|\bfX\Delta\|^2 \leq 4 r_n^2 h_2/n$.
Putting together with \eqref{eq:groupstrconvex}, 
conditioning on $\{\tdbeta\in\scrB_0\}\cap \calE^* \cap E_1^* \cap E_2$,
\begin{align*}
\|\Delta\|^2 &\leq \frac{1}{\kappa_1}\left\{\frac{1}{n}\|\bfX\Delta\|^2+ \kappa_2 \rho^2(2)\|\Delta\|_{\calG,2}^2 \right\} \\
& \leq \frac{4r_n^2}{\kappa_1 n}\left\{h_2+\kappa_2n\rho^2(2)h_1^2\right\}.
\end{align*}
Thus, for $\tdbeta\in\scrB_0$ and $t\in(0,1)$, we have
\begin{align*}
\Prob\left[\|\Delta\|^2\leq \frac{4r_n^2}{\kappa_1 n} \left\{h_2+\kappa_2n\rho^2(2)h_1^2\right\} \mid \tdbeta\right] & \geq \Prob(\calE^* \cap E^*_1 \cap E_2 \mid \tdbeta) \\
& \geq \Prob(\calE^* \cap E^*_t \cap E_2 \mid \tdbeta) \\
& = \Prob (\calE^* \cap E^*_t),
\end{align*}
where \eqref{eq:PrJointEvent}, a consequence of \eqref{eq:inclusion}, has been used to obtain the last equality.
\end{proof}

\subsection{Proofs of main results}
\label{sec:proofthmknvar}

\begin{proof}[Proof of Theorem~\ref{thm:asympknownvar}]
The proof is broken down into four steps.

Step 1: We verify all the assumptions of 
Theorems~\ref{thm:grouplasso} and \ref{thm:finiteknownvar} hold with high probability.
Since $q_0 (p_{\max} \vee \log J)\ll\sqn$, 
by Lemma~\ref{lm:GaussianRE} and Assumption~\ref{as:design}, 
RE$(c_3q_0)$ is satisfied with high probability for any $c_3>0$
and $\liminf_n\kappa(c_3 q_0)> 0$. 
As a consequence of Assumption~\ref{as:design}, $\max_j\textup{tr}(\bfC_{(jj)})=O_p(p_{\max})$, and 
thus a suitable choice of $\lambda \asymp \{(p_{\max} \vee \log J)/n\}^{1/2}$ satisfies \eqref{eq:condlambda}. The above arguments show that all the assumptions of Theorem~\ref{thm:grouplasso} are satisfied with high probability. Put $q=q_0$. We see that $\kappa(q_0)$ and $\kappa(2q_0)$ in \eqref{eq:boundG2norm} and \eqref{eq:boundL2} are bounded from below by a positive constant, and therefore $h_1=O(\lambda q_0)$ and $\tau=O(\lambda \surd{q_0})$. 
It follows from \eqref{eq:defrn} that
\begin{align*}
1 & \asymp_P (r_n/\sqpmax) \sup_{A_0} \|\hbeta_{(j)}-\beta_{0(j)}\| \\
 &\leq (r_n/\sqpmax) \|\hbeta-\beta_0\| =O_p\left\{r_n \lambda (q_0/p_{\max})^{1/2}\right\},
\end{align*} 
which implies $1/r_n=O(\lambda (q_0/p_{\max})^{1/2})$. Consequently, the order of $r_n$ is given by
\begin{equation}\label{eq:orderofr_n}
1/r_n=O\left\{\lambda (q_0/p_{\max})^{1/2}\right\}\quad\text{ and }\quad r_n=O(\sqn).
\end{equation}
Then, \eqref{eq:separablebeta} implies that, when $n$ is large, $S_1=A_{01}$ and $S_2=A_{02}$,
with $A_{01}$ and $A_{02}$ defined by \eqref{eq:defA01}.
We will show in Step 2 that $r_n^2h_2/n=o(1)$ and
invoke Lemma~\ref{lm:eventsubset} to obtain \eqref{eq:inclusion} when $n$ is large. 
Thus, all the assumptions of Theorem~\ref{thm:finiteknownvar} are satisfied with high probability. 

Step 2: We demonstrate that the upper bound for 
$\|\sdelta-\shdelta\|^2$ in \eqref{eq:finiteboundDelta} is $o(1)$.
It is immediate that
\begin{equation*}
r_n^2\rho^2(2) h_1^2=O(r_n^2\rho^2(2)\lambda^2q_0^2)=O(q_0^2(p_{\max}\vee\log J)^2/n)=o(1),
\end{equation*}
by \eqref{eq:orderofr_n}, \eqref{eq:rho2} and \eqref{eq:sparsescaling}.
To bound $h_2$ \eqref{eq:boundVforbeta}, we first show that $\eta_1=O(\eta_2)=o(1)$. 
Recall the respective definitions of $\scrB(M_1)$, $\eta_1$ and $\eta_2$ 
in \eqref{eq:BM2}, \eqref{eq:difftdbeta} and \eqref{eq:defrho}. 
For $\tdbeta\in\scrB(M_1)$,
\begin{equation}\label{eq:eta1and2}
\eta_1=\sup_{j\in A_{01}}\frac{\|\tdbeta_{(j)}-\beta_{0(j)} \|}{\| \beta_{0(j)}\|} 
 \leq \sup_{j\in A_{01}}\frac{M_1\sqpj}{r_n\| \beta_{0(j)}\|} =(M_1/M_2) \eta_2.
\end{equation}
On the other hand, by \eqref{eq:separablebeta} and \eqref{eq:orderofr_n},
\begin{align}
\eta_2  =\frac{1}{r_n} \sup_{A_{01}}\frac{M_2\sqpj}{\| \beta_{0(j)}\|}
\ll \frac{1}{r_n q_0\lmd(p_{\max})^{1/2}}=O(q_0^{-1/2} p_{\max}^{-1}) \label{eq:upperrho} 
\end{align}
and thus $\eta_2=o(1)$.  
It then follows from \eqref{eq:boundVforbeta} that
\begin{equation}\label{eq:boundh2}
\frac{r_n^2h_2}{n}=O\left(r_n \eta_2  q_0 \lambda\sqpmax\right)
+O\left(r_n^2\lambda q_0 \sup_{S_2} \|\beta_{0(j)}\|\right)=o(1),
\end{equation}
by plugging \eqref{eq:upperrho} and \eqref{eq:separablebeta} in the first and the second term, respectively.

Step 3: We prove \eqref{eq:difftwohdelta}, which follows from \eqref{eq:finiteboundDelta}. 
Since $\sup_{A_0}\|\hdelta_{(j)}\|=O_p(\sqpmax)$ by \eqref{eq:defrn} 
and $p_{\max}\asymp p_{\min}$,
for every $\epsilon>0$ and $t\in(0,1)$, we can find $M_2<\infty$ and $N_1$ 
such that $\Prob(E_t) \geq 1-\frac{1}{2}\epsilon$ when $n\geq N_1$, 
where $E_t=\{\hdelta\in \scrD(tM_2)\}$. Define
\begin{equation}\label{eq:REX}
\scrX=\{\bfX\in\R^{n\times p}: 
\text{\eqref{eq:groupstrconvex} holds with universal constants $(\kappa_1,\kappa_2)$}\}.
\end{equation}
Since $\Prob(\bfX\in\scrX)\to 1$ by Assumption~\ref{as:design}, 
$\Prob(E_t\mid \bfX\in\scrX)\geq 1-\epsilon$ when $n\geq N_2$ for some $N_2$.
Recall the event $\calE$ in Theorem~\ref{thm:grouplasso}. 
If we choose $U^*=U$ and fix $\tdbeta=\beta_0$, then setting $h_2=0$ in \eqref{eq:finiteboundDelta} leads to
\begin{align*}
\Prob\left\{\|\hdelta-\tddelta\|^2\leq \left.(4\kappa_2/{\kappa_1})r_n^2\rho^2(2)h_1^2=o(1) \,\right| \bfX\in\scrX \right\} & \geq \Prob(\calE \cap E_t \mid \bfX\in\scrX) \\
& \geq 1-(2J^{1-a}+\epsilon),
\end{align*}
and hence \eqref{eq:difftwohdelta} holds. 

Step 4: To establish \eqref{eq:DeltatoinP0}, we first show $\Prob(\tdbeta\in\scrB_0)\to 1$
and $\Prob(\calE^* \cap E^*_t\mid \bfX\in\scrX)\to 1$.
Since $\eta_1\to0$ for $\tdbeta\in\scrB(M_1)$, there is $N_3$ such that 
$n\geq N_3$ implies $\scrB(M_1)\subset \scrB_0$ \eqref{eq:defB0} and thus
$\Prob(\tdbeta\in\scrB_0)\geq \Prob(\tdbeta\in\scrB(M_1)) \to 1$.
Note that $\sup_{A_0}\|\shdelta_{(j)}\|=O_p(\sqpmax)$ as $\nu[U^*]=\nu[U]$.
By the arguments in Step 3, $\Prob(\calE^*\mid\bfX\in\scrX )=\Prob(\calE\mid\bfX\in\scrX)\to 1$ 
and $P(E^*_t\mid\bfX\in\scrX)\to 1$. 
Thus, $\Prob(\calE^* \cap E^*_t\mid\bfX\in\scrX)\to 1$ as $n\to\infty$. 
Letting $n\to\infty$ in \eqref{eq:finiteboundDelta} gives
\begin{equation*}
\Prob\left\{\left.\|\sdelta-\shdelta\|^2=o(1) \,\right| \bfX,\tdbeta \right\} \to 1 \quad\text{in probability},
\end{equation*}
which then implies \eqref{eq:DeltatoinP0} 
and completes the proof.
\end{proof}

\begin{proof}[Proof of Corollary~\ref{thm:knownvarlaw}]
By assumption, there is a positive constant $c<\infty$ such that $\|L_j v\|\leq c\|v\|$ for all $j$.
It follows from \eqref{eq:groupwisebound} with $p_{\max}\asymp p_{\min}$ that
\begin{equation} 
a_n\sup_{j\in \N_J} \|\hbeta_{(j)}-\beta_{0(j)}\|/\sqpj =O_p(1). \nonumber
\end{equation}
By the way $r_n$ is defined in \eqref{eq:defrn}, we have $a_n=O(r_n)$, and thus
\eqref{eq:DeltatoinP0} of Theorem~\ref{thm:asympknownvar} implies
\begin{equation}\label{eq:anconverge}
\Prob\left\{\sup_{j\in \N_J}\left\|a_n (\hbeta^*_{(j)}-\beta_{0(j)})-a_n (\sbeta_{(j)}-\tdbeta_{(j)})\right\| 
> \epsilon/c \left|\, \bfX,\tdbeta\right.\right\}=o_p(1).
\end{equation}
Putting $\tilde{T}_j=L_j(\hbeta^*_{(j)}-\beta_{0(j)})$ and $\Delta_j^*=a_n(\tilde{T}_j-T^*_j)$, we arrive at
\begin{equation*}
\Prob\left\{\sup\|\Delta_j^*\| > \epsilon \mid \bfX,\tdbeta\right\}=o_p(1).
\end{equation*}
The desired conclusion \eqref{eq:Mnormweakconv} follows immediately as
$\nu[a_n \tilde{T}_j\mid \bfX,\tdbeta]=\nu[a_n T_j\mid \bfX]$.
\end{proof}

\begin{proof}[Proof of Proposition~\ref{prop:bth}]
By Assumption~\ref{as:design}, $\Prob(\bfX\in\scrX)\to 1$ with $\scrX$ defined in \eqref{eq:REX},
which then implies that RE$(q_0)$ holds with high probability.
Under Assumption~\ref{as:betasignal}, 
\begin{align*}
\inf_{j\in S_1}\|\beta_{0(j)}\| \gg \lmd s_0 \gg  \tau \quad\text{and}\quad
\sup_{j\in S_2}\|\beta_{0(j)}\|  \ll \frac{p_{\max}}{p_{\max}\vee\log J} \cdot \frac{\lmd}{s_0} \ll \tau,
\end{align*}
where $\tau$ is as in \eqref{eq:boundL2} with $q=q_0$.
Since $\lmd s_0  \gg b_{\supth} \gg  \lambda\surd{q_0}\asymp \tau$, it follows from \eqref{eq:boundL2} that 
\begin{equation*}
\Prob\left\{G(\tdbeta)=S_1\right\} \geq \Prob\left(\calE\cap \{\bfX\in\scrX\}\right) \to 1,
\end{equation*}
where $\calE$ is defined in Theorem~\ref{thm:grouplasso}.
Thus, only the strong coefficient groups in $S_1$ will be kept after thresholding.
Conditioning on the event $\{G(\tdbeta)=S_1\}$,
\begin{align*}
\sup_{j\in S_1}r_n\|\tdbeta_{(j)}-\beta_{0(j)}\|&= \sup_{j\in S_1}r_n\|\hbeta_{(j)}-\beta_{0(j)}\|=O_p (\sqpmax), \\
\sup_{j\in S_2}r_n\|\tdbeta_{(j)}-\beta_{0(j)}\|&= \sup_{j\in S_2}r_n\|\beta_{0(j)}\|=o_p(\sqpmax),
\end{align*}
where the first line comes from \eqref{eq:defrn} and the second from \eqref{eq:separablebeta} 
with $r_n=O(\sqn)$. 
This shows that $r_n(\tdbeta-\beta_0)\in\scrD(M_1)$ and completes the proof.
\end{proof}

\begin{proof}[Proof of Proposition~\ref{prop:GaussianC1}]
By Lemma~\ref{lm:NegahbanProp1}, with high probability, \eqref{eq:groupstrconvex} is satisfied with constants
$\kappa_1(\bmSigma)=\frac{1}{4}\Lambda_{\min}(\bmSigma^{1/2})$ and 
$\kappa_2(\bmSigma)\leq 9 \Lambda_{\max}(\bmSigma^{1/2})$; 
see the proof of Proposition 1 in \cite{Negahban12}. If $\Lambda_k(\bmSigma)\in[c_*,c^*]$ for all $k=1,\ldots,p$, 
we can find universal constants $(\kappa_1,\kappa_2)$ such that
\begin{equation*}
\liminf_{n\to\infty} \kappa_1(\bmSigma)>\kappa_1>0\quad\text{ and }\quad 
\limsup_{n\to\infty} \kappa_2(\bmSigma)<\kappa_2<\infty.
\end{equation*}
This shows that \eqref{eq:groupstrconvex} holds with 
universal constants $(\kappa_1,\kappa_2)$.
Put $\eps_1=p_{\max}/n$. Since $\Lambda_{\max}(\bmSigma)\leq c^*$ and $\eps_1\to 0$, 
\begin{equation*}
\Prob\left\{\sup_{j \in\N_J} \Lambda_{\max}(\bfC_{(jj)})\leq (1+\epsilon_2)^2 c^*\right\}
\geq 1- J e^{-n \epsilon_2^2/2} \to 1
\end{equation*}
for any given $\epsilon_2 \in(0,1)$, according to the proof of Proposition 2 in \cite{ZhangHuang08}. 
\end{proof}

\section{Estimation of error variance}\label{sec:unknownvar}

\subsection{Problem setup}

Let $\hsigma=\hsigma(y,\bfX)$ be an estimator of $\sigma$ when it is unknown. 
Recall that in our bootstrap method 
we draw $\varepsilon^* \mid \hsigma \sim \dnorm_n(0,\hsigma^2\bfI_n)$.
Put $U^*=\bfX^{\trans} \varepsilon^*/n$ and define $\sdelta$ as in \eqref{eq:defsdelta}.
We want to show that, with high probability, the conditional distribution $\nu[\sdelta\mid \tdbeta,\hsigma]$ 
is close to $\nu[\hdelta]$. In this case, the distribution of $U^*$ is different from that of $U$. 
To facilitate our analysis, let
\begin{equation}\label{eq:defsdelta0}
\sdelta_0\in \argmin_{\delta} V(\delta; \beta_0,(\sigma/\hsigma) U^*).
\end{equation}
Since $\nu[(\sigma/\hsigma) U^*]=\nu[U]$, we have $\nu[\sdelta_0 \mid \bfX]=\nu[\hdelta \mid \bfX]$. 
Our goal is then to bound $\|\sdelta-\sdelta_0\|$ for a large set of $(\tdbeta,\hsigma)$.
By triangle inequality,
\begin{equation*}
\|\sdelta-\sdelta_0\|\leq \|\sdelta-\shdelta \|+\|\sdelta_0-\shdelta\|,
\end{equation*}
where $\shdelta$ is as in \eqref{eq:defshdelta}.
The difference in the first term on the right is the result of using an estimated $\tdbeta$
instead of $\beta_0$ and can be bounded by Theorem~\ref{thm:finiteknownvar} 
conditioning on any $\hsigma>0$. 
In what follows, we will derive a bound for the second term, which
represents the effect of using a different noise level than the true $\sigma$ in the above simulation.
Proofs of all the results in this section can be found in Section~\ref{sec:apxproofs_sec3}.

\subsection{Main results}

For $\zeta>0$, define
\begin{align}
\scrS(\zeta) & = \{x >0: (x/\sigma) \vee (\sigma/x) \leq 1+ \zeta \}, \label{eq:sethsigma}\\
\calE^*(\zeta) &= \cap_{j=1}^J\{\|U^*_{(j)}\|\leq (1+\zeta)^{-1} w_j \lambda/2\}.
\end{align}

\begin{lemma}\label{lm:hsigma}
Assume that $|G(\beta_0)|\leq q$, RE$(q)$ is satisfied, and $\hsigma \in \scrS(\eta_3)$ for some $\eta_3>0$. 
Then on the event $\calE^*(\eta_3)$, we have
\begin{equation}\label{eq:XDeltaforsigma}
\frac{1}{n}\|\bfX(\sdelta_0-\shdelta)\|^2 \leq \frac{32w_{\max}r_n^2\lambda^2\eta_3}{w_{\min}\kappa^2(q)}\sum_{j\in A_0} w_j^2,
\end{equation}
where $\sdelta_0$ and $\shdelta$ are defined in \eqref{eq:defsdelta0} and \eqref{eq:defshdelta}, respectively.
\end{lemma}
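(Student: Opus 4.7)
The strategy is to compare $V(\cdot;\beta_0,U^*)$ and $V(\cdot;\beta_0,(\sigma/\hsigma)U^*)$, which differ only in the single linear term $-(n/r_n)\delta^{\trans}u$, and to combine a convexity-based lower bound on $\|\bfX(\sdelta_0-\shdelta)\|^2$ with a Cauchy--Schwarz upper bound on the corresponding linear perturbation. The only nonroutine step is to ensure that the group-lasso error bound \eqref{eq:boundG2norm} applies to both $\shdelta$ and $\sdelta_0$ on the event $\calE^*(\eta_3)$.

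Write $u^*=U^*$ and $\tilde u=(\sigma/\hsigma)U^*$, and set $\Delta=\sdelta_0-\shdelta$. From the definitions of $V$ in \eqref{eq:defVn}, for any $\delta$,
\begin{equation*}
V(\delta;\beta_0,\tilde u)-V(\delta;\beta_0,u^*)=-\frac{n}{r_n}\delta^{\trans}(\tilde u-u^*).
\end{equation*}
Applying Lemma~\ref{lm:convexity} with $\gamma=\shdelta$ and minimizing loss $V(\cdot;\beta_0,u^*)$ gives
\begin{equation*}
V(\sdelta_0;\beta_0,u^*)-V(\shdelta;\beta_0,u^*)\geq \frac{1}{2r_n^2}\|\bfX\Delta\|^2.
\end{equation*}
On the other hand, the left side equals
\begin{equation*}
\bigl[V(\sdelta_0;\beta_0,\tilde u)-V(\shdelta;\beta_0,\tilde u)\bigr]+\frac{n}{r_n}\Delta^{\trans}(\tilde u-u^*)\leq \frac{n}{r_n}\Delta^{\trans}(\tilde u-u^*),
\end{equation*}
since $\sdelta_0$ minimizes $V(\cdot;\beta_0,\tilde u)$ by \eqref{eq:defsdelta0}. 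Noting $\tilde u-u^*=(\sigma/\hsigma-1)U^*$ and $|\sigma/\hsigma-1|\leq\eta_3$ under $\hsigma\in\scrS(\eta_3)$, Cauchy--Schwarz gives
\begin{equation*}
\Delta^{\trans}(\tilde u-u^*)\leq \eta_3\sum_{j=1}^J\|\Delta_{(j)}\|\,\|U^*_{(j)}\|.
\end{equation*}

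The crucial observation is that on $\calE^*(\eta_3)$ we have $\|U^*_{(j)}\|\leq (1+\eta_3)^{-1}w_j\lambda/2\leq w_j\lambda/2$, so Theorem~\ref{thm:grouplasso} applies to the group-lasso minimizer underlying $\shdelta$ and yields $\|\shdelta\|_{\calG,2}\leq r_n h_1$. For $\sdelta_0$, the effective noise vector satisfies $\|(\sigma/\hsigma)U^*_{(j)}\|\leq (\sigma/\hsigma)(1+\eta_3)^{-1}w_j\lambda/2\leq w_j\lambda/2$ since $\sigma/\hsigma\leq 1+\eta_3$, so the same theorem gives $\|\sdelta_0\|_{\calG,2}\leq r_n h_1$ as well. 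Combining, $\|\Delta\|_{\calG,2}\leq 2r_n h_1$. Plugging $\|U^*_{(j)}\|\leq (1+\eta_3)^{-1}w_j\lambda/2$ into the Cauchy--Schwarz bound then yields
\begin{equation*}
\sum_j \|\Delta_{(j)}\|\,\|U^*_{(j)}\|\leq \frac{(1+\eta_3)^{-1}\lambda}{2}\sum_j w_j\|\Delta_{(j)}\|\leq \frac{(1+\eta_3)^{-1}\lambda w_{\max}}{2}\|\Delta\|_{\calG,2}\leq (1+\eta_3)^{-1}\lambda w_{\max} r_n h_1.
\end{equation*}

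Assembling the two-sided bound,
\begin{equation*}
\frac{1}{2r_n^2}\|\bfX\Delta\|^2\leq \frac{n\eta_3}{r_n}\cdot (1+\eta_3)^{-1}\lambda w_{\max} r_n h_1,
\end{equation*}
which rearranges to
\begin{equation*}
\frac{1}{n}\|\bfX\Delta\|^2\leq 2 r_n^2 \eta_3 (1+\eta_3)^{-1}\lambda w_{\max}h_1\leq 2 r_n^2 \eta_3 \lambda w_{\max}h_1.
\end{equation*}
Substituting $h_1=16\lambda(w_{\min}\kappa^2(q))^{-1}\sum_{A_0}w_j^2$ from \eqref{eq:boundG2norm} produces the stated bound. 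The only delicate point is the double application of Theorem~\ref{thm:grouplasso}; once the $(1+\eta_3)^{-1}$ slack built into $\calE^*(\eta_3)$ is used to absorb the factor $\sigma/\hsigma\leq 1+\eta_3$, the rest is bookkeeping.
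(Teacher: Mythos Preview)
Your proof is correct and self-contained; the paper simply defers to a reference, stating that the result follows by a straightforward generalization of Lemma~8 in \cite{Zhou14}, without spelling out the argument. The natural route you take---combining the convexity lower bound of Lemma~\ref{lm:convexity} with the perturbation identity $V(\delta;\beta_0,\tilde u)-V(\delta;\beta_0,u^*)=-(n/r_n)\delta^{\trans}(\tilde u-u^*)$, then controlling the linear term via Cauchy--Schwarz and the $\ell_{1,2}$ bound from Theorem~\ref{thm:grouplasso}---is exactly what that generalization amounts to, and your observation that the $(1+\eta_3)^{-1}$ slack in $\calE^*(\eta_3)$ is precisely what is needed to apply Theorem~\ref{thm:grouplasso} to $\sdelta_0$ is the one point worth highlighting.
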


For notational brevity, put 
\begin{equation}\label{eq:defh3}
h_3=\frac{8w_{\max}n\lambda^2\eta_3}{w_{\min}\kappa^2(q)}\sum_{j\in A_0} w_j^2
\end{equation}
so that the upper bound in \eqref{eq:XDeltaforsigma} is $4 r_n^2 h_3/n$. Assuming \eqref{eq:groupstrconvex}, we can bound $\|\sdelta_0-\shdelta\|$ conditional on $\hsigma$, in analogy to Theorem~\ref{thm:finiteknownvar}.

\begin{theorem}\label{thm:finiteunkvar}
Consider the model \eqref{eq:model} with $\varepsilon \sim \dnorm_n(0,\sigma^2\bfI_n)$ and let $J\geq 2$, $n \geq 1$. Assume that $|G(\beta_0)|\leq q$, RE$(q)$ holds, and \eqref{eq:groupstrconvex} is satisfied. 
Define $\sdelta_0$ and $\shdelta$ by \eqref{eq:defsdelta0} and \eqref{eq:defshdelta}, respectively, 
with $\varepsilon^*\mid \hsigma \sim \dnorm_n(0,\hsigma^2\bfI_n)$. 
Then on the event $\{\hsigma \in \scrS(\eta_3)\}$ for $\eta_3>0$, we have
\begin{equation}\label{eq:boundDeltaforhsigma}
\Prob\left[\|\sdelta_0-\shdelta\|^2\leq \frac{4r_n^2}{\kappa_1 n} \left.\left\{h_3+\kappa_2n\rho^2(2)h_1^2\right\} \,\right| \hsigma\right]\geq \Prob\left\{\calE^*(\eta_3) \mid \hsigma\right\},
\end{equation}
where $h_1$ and $h_3$ are defined in \eqref{eq:boundG2norm} and \eqref{eq:defh3}, respectively.
\end{theorem}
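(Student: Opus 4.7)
The plan is to mirror the proof of Theorem~\ref{thm:finiteknownvar}, replacing Lemma~\ref{lm:boundDeltaforbeta} with Lemma~\ref{lm:hsigma} as the ingredient that controls $\frac{1}{n}\|\bfX\Delta\|^2$. Writing $\Delta=\sdelta_0-\shdelta$, the target estimate will emerge from plugging bounds on $\frac{1}{n}\|\bfX\Delta\|^2$ and $\|\Delta\|_{\calG,2}$ into the restricted strong convexity inequality \eqref{eq:groupstrconvex}.

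First, I would verify that both $\sdelta_0$ and $\shdelta$ satisfy the hypotheses of Theorem~\ref{thm:grouplasso} on the event $\calE^*(\eta_3)\cap\{\hsigma\in\scrS(\eta_3)\}$. Since $(1+\eta_3)^{-1}<1$, we have $\calE^*(\eta_3)\subset\calE^*$, so Theorem~\ref{thm:grouplasso} applies to $\shdelta$ and yields $\|\shdelta\|_{\calG,2}\leq r_n h_1$. For $\sdelta_0$, observe that its defining noise vector is $(\sigma/\hsigma)U^*$, not $U^*$; on $\{\hsigma\in\scrS(\eta_3)\}$ the ratio satisfies $\sigma/\hsigma\leq 1+\eta_3$, while on $\calE^*(\eta_3)$ we have $\|U^*_{(j)}\|\leq (1+\eta_3)^{-1}w_j\lambda/2$ for every $j$. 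Multiplying these two bounds gives $\|(\sigma/\hsigma)U^*_{(j)}\|\leq w_j\lambda/2$, which is precisely the event required to apply Theorem~\ref{thm:grouplasso} to $\sdelta_0$. This delivers $\|\sdelta_0\|_{\calG,2}\leq r_n h_1$, and the triangle inequality then yields $\|\Delta\|_{\calG,2}\leq 2 r_n h_1$.

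Next, Lemma~\ref{lm:hsigma} applies on the same event and gives $\frac{1}{n}\|\bfX\Delta\|^2\leq 4 r_n^2 h_3/n$. Combining these two estimates with \eqref{eq:groupstrconvex} yields
$$\|\Delta\|^2 \leq \frac{1}{\kappa_1}\left\{\frac{1}{n}\|\bfX\Delta\|^2 + \kappa_2\rho^2(2)\|\Delta\|^2_{\calG,2}\right\} \leq \frac{4r_n^2}{\kappa_1 n}\left\{h_3 + \kappa_2 n\rho^2(2) h_1^2\right\},$$
which is the claimed deviation bound. Since $\hsigma$ is fixed under the conditioning and the preceding inequalities hold deterministically whenever $\calE^*(\eta_3)$ occurs, taking conditional probability given $\hsigma$ on the set $\{\hsigma\in\scrS(\eta_3)\}$ immediately gives \eqref{eq:boundDeltaforhsigma}.

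The argument is essentially an assembly of Theorem~\ref{thm:grouplasso}, Lemma~\ref{lm:hsigma}, and \eqref{eq:groupstrconvex}, so no substantial technical obstacle arises. The only slightly delicate bookkeeping concerns applying Theorem~\ref{thm:grouplasso} to $\sdelta_0$, whose noise is the rescaled $(\sigma/\hsigma)U^*$; the $(1+\eta_3)^{-1}$ factor built into the definition of $\calE^*(\eta_3)$ is tailored precisely to absorb this rescaling, so the verification reduces to one short calculation. All remaining steps are direct analogues of the corresponding parts of the proof of Theorem~\ref{thm:finiteknownvar}.
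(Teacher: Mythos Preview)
Your proposal is correct and matches the paper's own proof essentially step for step: both verify the $\|\cdot\|_{\calG,2}$ bounds for $\shdelta$ and $\sdelta_0$ on $\calE^*(\eta_3)$ via Theorem~\ref{thm:grouplasso} (using the $(1+\eta_3)^{-1}$ factor to absorb the rescaling $\sigma/\hsigma$), invoke Lemma~\ref{lm:hsigma} for the $\frac{1}{n}\|\bfX\Delta\|^2$ bound, and then plug into \eqref{eq:groupstrconvex}. The bookkeeping you flag as ``slightly delicate'' is handled identically in the paper.
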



Under the asymptotic framework introduced in Section~\ref{sec:knownvar}, we obtain the main results
when an estimator $\hsigma^2$ of the unknown error variance is used in the simulation procedure.
Recall that $r_n$ is defined via \eqref{eq:defrn}.

\begin{theorem}\label{thm:asympestvar}
Consider the model \eqref{eq:model} with $\varepsilon \sim \dnorm_n(0,\sigma^2\bfI_n)$ 
and $p_{\max}/p_{\min}\asymp 1$. 
Suppose that Assumptions \ref{as:design} to \ref{as:pluginest} hold and \eqref{eq:asymcondhsigma} is satisfied.
Choose a suitable $\lambda \asymp \{(p_{\max} \vee \log J)/n\}^{1/2}$. 
Let $\sdelta$ and $\sdelta_0$ be any minimizers of \eqref{eq:defsdelta} and \eqref{eq:defsdelta0}, respectively, 
where $\varepsilon^* \mid \hsigma \sim \dnorm_n(0,\hsigma^2\bfI_n)$. 
Then for every $\epsilon>0$,
\begin{equation}\label{eq:DeltatoinP0_hsigma}
\Prob\left(\|\sdelta-\sdelta_{0}\| > \epsilon \mid 
\bfX,\tdbeta,\hsigma\right)=o_p(1).
\end{equation}
\end{theorem}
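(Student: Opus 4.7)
The plan is to split the deviation via the triangle inequality
\[
\|\sdelta - \sdelta_0\| \leq \|\sdelta - \shdelta\| + \|\shdelta - \sdelta_0\|,
\]
where $\shdelta=r_n(\hbeta^*-\beta_0)$ is the intermediate variable from \eqref{eq:defshdelta}, now defined with $\varepsilon^*\mid\hsigma\sim\dnorm_n(0,\hsigma^2\bfI_n)$. The first term isolates the effect of plugging in $\tdbeta$ in place of $\beta_0$ in the bootstrap, while the second term isolates the effect of using the estimated variance $\hsigma^2$ rather than $\sigma^2$. Each term will be shown to be $o_p(1)$ conditional on $(\bfX,\tdbeta,\hsigma)$.

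For $\|\shdelta - \sdelta_0\|$, I would directly apply Theorem~\ref{thm:finiteunkvar} with $\eta_3 = (\hsigma/\sigma)\vee(\sigma/\hsigma) - 1$, so that $\hsigma\in\scrS(\eta_3)$ by construction. Under \eqref{eq:asymcondhsigma}, $\eta_3=o_p(1)$. Applying the order bounds for $r_n$, $\lambda$ and $\kappa(q_0)$ already established in Step~1 of the proof of Theorem~\ref{thm:asympknownvar}, one obtains $r_n^2h_3/n = O(r_n^2\lambda^2 q_0\eta_3) = O\{q_0(p_{\max}\vee\log J)\eta_3\}=o_p(1)$ by \eqref{eq:asymcondhsigma}, and $r_n^2\rho^2(2)h_1^2=o(1)$ as in Step~2 of that same proof. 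Since $\hsigma/\sigma\toinP 1$ and a suitable $\lambda\asymp\{(p_{\max}\vee\log J)/n\}^{1/2}$ exceeds the threshold in \eqref{eq:condlambda} with $\sigma$ replaced by $(1+\eta_3)\sigma$ with high probability, $\Prob\{\calE^*(\eta_3)\mid\hsigma\}\to 1$ in probability. Combining these in \eqref{eq:boundDeltaforhsigma} yields $\|\shdelta-\sdelta_0\|=o_p(1)$ conditional on $(\bfX,\hsigma)$.

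For $\|\sdelta - \shdelta\|$, I would essentially replay Steps~1, 3, and 4 of the proof of Theorem~\ref{thm:asympknownvar} with $\varepsilon^*\mid\hsigma\sim\dnorm_n(0,\hsigma^2\bfI_n)$ in place of the $\dnorm_n(0,\sigma^2\bfI_n)$ distribution. The nonasymptotic bound of Theorem~\ref{thm:finiteknownvar} still applies, as it treats $\bfX$ as fixed and does not depend on the particular scale of $\varepsilon^*$; all that changes is the distribution under which $\Prob(\calE^*\cap E^*_t)$ is computed. Because $\hsigma/\sigma=1+o_p(1)$, the Gaussian tail bounds underlying $\Prob(\calE^*\cap E^*_t\mid\bfX,\hsigma)\to 1$ in probability go through with the same $\lambda$. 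Theorem~\ref{thm:finiteknownvar} then delivers $\|\sdelta-\shdelta\|=o_p(1)$ conditional on $(\bfX,\tdbeta,\hsigma)$.

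The main technical subtlety, and the step I would handle most carefully, is that the event $\calE^*$, the set $E^*_t$, and the event $\calE^*(\eta_3)$ all depend on $\hsigma$ through the variance of $\varepsilon^*$, so one needs conditional rather than marginal high-probability statements. This reduces to showing continuity of the relevant Gaussian concentration bounds in the noise level together with consistency of $\hsigma$ from \eqref{eq:asymcondhsigma}; the rest of the argument is a straightforward combination of Theorems~\ref{thm:finiteknownvar} and~\ref{thm:finiteunkvar} with the order computations already carried out for Theorem~\ref{thm:asympknownvar}.
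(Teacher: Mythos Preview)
Your proposal is correct and follows essentially the same route as the paper: the triangle-inequality split through $\shdelta$, Theorem~\ref{thm:finiteunkvar} for $\|\shdelta-\sdelta_0\|$ with the order computation $r_n^2h_3/n=O(r_n^2\lambda^2 q_0\eta_3)=o_p(1)$ under \eqref{eq:asymcondhsigma}, and a re-application of Theorem~\ref{thm:asympknownvar}/\ref{thm:finiteknownvar} for $\|\sdelta-\shdelta\|$ conditional on $\hsigma$. The paper handles the second piece even more tersely, simply noting that Theorem~\ref{thm:asympknownvar} ``applies for every $\sigma>0$'' so that \eqref{eq:DeltatoinP0} holds conditionally on $\hsigma$; your proposal to replay Steps~1, 3, 4 explicitly and to flag the conditional (rather than marginal) nature of the high-probability events is a slightly more careful rendering of the same argument.
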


By the same argument, we can establish the conclusion in Corollary~\ref{thm:knownvarlaw} 
conditioning on $\hsigma$, which extends our inferential framework to the use
of an estimated noise level.

\subsection{Lasso as a special case}

Let $p_j=1$ for all $j$, $J=p$ and $q_0=s_0$. 
Consider a simpler situation with $S_1=A_0$ in Assumption~\ref{as:betasignal},
i.e., all active coefficients satisfy the beta-min condition. In this case, $\rho^2(2)=O(\log p/n)$ \eqref{eq:rho2}
and $h_2=0$ \eqref{eq:boundVforbeta}.
Under the assumptions of Theorems~\ref{thm:finiteknownvar} and \ref{thm:finiteunkvar}, 
we may combine \eqref{eq:finiteboundDelta} and \eqref{eq:boundDeltaforhsigma} to obtain
\begin{equation}\label{eq:boundlasso}
\Prob\left[\|\sdelta-\sdelta_0\|^2\leq \frac{8r_n^2}{\kappa_1 n}\left\{h_3+2\kappa_2n\rho^2(2)h_1^2\right\}
\defi B_1 \left|\, \bfX,\tdbeta,\hsigma\right.\right] \to 1
\end{equation}
in probability for the lasso.
Plugging in $h_1$, $h_3$ and that $\kappa^2(s_0)\geq \kappa_1/2$ due to Lemma~\ref{lm:GaussianRE}, 
the upper bound
\begin{equation*}
B_1=\left(c_1 \frac{s_0\log p}{n} + c_2 \eta_3\right) r_n^2 \lambda^2 s_0,
\end{equation*}
where $c_1$ and $c_2$ are positive constants. 
Under the scaling \eqref{eq:sparsescaling}, 
${s_0\log p}/{n}\ll {1}/{\sqn} =O(\eta_3)$,
since $\eta_3$ represents the convergence rate of $\hsigma$. Consequently,
$B_1=[c_2+o(1)] \eta_3 r_n^2 \lambda^2 s_0$.

Under comparable conditions, Theorem 4 in \cite{Zhou14} contains a similar bound for the lasso, 
\begin{equation*}
\Prob\left\{\|\sdelta-\sdelta_0\|^2\leq \frac{c_3 \eta_3 r_n^2 \lambda^2 s_0}{\phi_{\min}(c_4\phi_{\max}s_0)}
\defi B_2 \left|\, \bfX,\tdbeta,\hsigma\right.\right\} \to 1 \quad\text{in probability},
\end{equation*}
where $c_3, c_4>0$, $\phi_{\max}=\Lambda_{\max}(\bfC)$, and 
\begin{equation*}
\phi_{\min}(m)= \min_{1\leq |G(\Delta)|\leq m}\frac{\Delta^{\trans}\bfC\Delta}{\|\Delta\|^2}.
\end{equation*}
Note that here $G(\Delta)$ is the set of nonzero components of $\Delta\in\R^p$.
By definition $\phi_{\min}(m)=0$ for $m>n$. It is thus necessary to have $c_4\phi_{\max}s_0\leq n$
for $B_2$ to be of the same order as $B_1$. However,
by Lemma~\ref{lm:LmaxofC} below, 
$\phi_{\max}\asymp p/n$ with nearly full measure for column-normalized design matrices, 
and therefore, it is necessary that $s_0p=O(n^2)$. 
This severely limits the growth of the dimension $p$, as compared to the scaling of $s_0 \log p \ll \sqn$ 
for $B_1$. We see that, in addition to the substantial generalization to groups of unbounded size, 
this work also greatly improves the result in \cite{Zhou14} even for the special case of the lasso.

Let $\mbS^k$ be the $k$-dimensional unit sphere. The following lemma contains bounds for $\Lambda_{\max}(\bfC)$,
assuming that the columns of $\bfX$ are normalized so that $\|X_j\|=\sqn$ for all $j\in\N_p$.
 
\begin{lemma}\label{lm:LmaxofC}
Assume that $p>n$, $X_j/\sqn\in\R^n$ is drawn independently and uniformly over $\mbS^{n-1}$ for $j=1,\ldots,p$, and $\chi_n$ follows the $\chi$-distribution with $n$ degrees of freedom.
Then for every $\eps\in(0,{1}/{2})$,
\begin{align}\label{eq:Cmaxeigen}
&\Prob\left\{ 1\leq \frac{n}{p}\cdot\Lambda_{\max}(\bfC) \leq \left(\frac{1+(n/p)^{1/2}+\eps}{\theta_n-\eps}\right)^2\right\} \nonumber\\
&\quad\quad\quad\quad\quad\quad\quad\quad\quad
\geq 1-p\exp\left(-{n\eps^2}/{2}\right)-\exp\left(-{p\eps^2}/{2}\right), 
\end{align}
where $\theta_n=\E(\chi_n)/\sqn \in ({1}/{2},1]$ for all $n\geq 1$.
\end{lemma}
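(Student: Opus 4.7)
The plan is to split the claim into the trivial lower bound, which is purely algebraic, and the upper bound, which reduces to two standard Gaussian concentration facts combined through a representation of the uniform distribution on $\mbS^{n-1}$ via a normalized Gaussian vector.

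\textbf{Lower bound (deterministic).} Since $\|X_j\|=\sqn$ for every $j$, the Gram matrix $\bfC=\frn\bfX^{\trans}\bfX$ satisfies $\diag(\bfC)=\mathbf{1}_p$, so $\tr(\bfC)=p$. Because $\bfC$ has rank at most $n<p$, at most $n$ of its eigenvalues are nonzero, hence $\Lambda_{\max}(\bfC)\geq \tr(\bfC)/n=p/n$, i.e. $(n/p)\Lambda_{\max}(\bfC)\geq 1$ almost surely.

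\textbf{Upper bound via Gaussian representation.} Write $X_j=\sqn\, G_j/\|G_j\|$ with $G_j\iid \dnorm_n(0,\bfI_n)$, and let $\bfG=[G_1\mid\cdots\mid G_p]$, $\bfD=\diag(\|G_1\|,\ldots,\|G_p\|)$. Then $\bfX=\sqn\,\bfG\bfD^{-1}$, so
\begin{equation*}
\frac{n}{p}\Lambda_{\max}(\bfC)=\frac{1}{p}\sigma_{\max}^2(\bfX)\leq \Bigl[\sigma_{\max}(\bfG)/\sqrt{p}\Bigr]^2 \Big/ \Bigl[\min_{j\in\N_p}\|G_j\|/\sqn\Bigr]^2.
\end{equation*}
It now suffices to upper-bound the numerator and lower-bound the denominator.

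\textbf{Two concentration steps.} First, $\bfG\mapsto\sigma_{\max}(\bfG)$ is $1$-Lipschitz in the Frobenius norm and $\E[\sigma_{\max}(\bfG)]\leq\sqn+\sqrt{p}$ (Gordon/Davidson--Szarek), so Gaussian concentration yields
\begin{equation*}
\Prob\{\sigma_{\max}(\bfG)\geq \sqn+\sqrt{p}+\eps\sqrt{p}\}\leq \exp(-p\eps^2/2);
\end{equation*}
setting $t=\eps\sqrt{p}$ and dividing by $\sqrt{p}$ gives $\sigma_{\max}(\bfG)/\sqrt{p}\leq 1+(n/p)^{1/2}+\eps$ off an event of probability at most $\exp(-p\eps^2/2)$. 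Second, each $\|G_j\|$ is a $1$-Lipschitz function of a standard Gaussian vector with $\E\|G_j\|=\sqn\,\theta_n$, so
\begin{equation*}
\Prob\{\|G_j\|/\sqn\leq \theta_n-\eps\}\leq \exp(-n\eps^2/2);
\end{equation*}
a union bound over $j\in\N_p$ gives $\min_j\|G_j\|/\sqn\geq\theta_n-\eps$ off an event of probability at most $p\exp(-n\eps^2/2)$. Combining these two estimates inside the displayed inequality above yields the stated upper bound, and one more union bound produces the probability $1-p\exp(-n\eps^2/2)-\exp(-p\eps^2/2)$.

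\textbf{The bound $\theta_n\in(1/2,1]$.} Jensen's inequality gives $\theta_n^2\leq \E\chi_n^2/n=1$; for the lower bound, $\theta_n=\sqrt{2/n}\,\Gamma((n+1)/2)/\Gamma(n/2)$ is monotone increasing in $n$ (a standard property of the Gamma ratio) with $\theta_1=\sqrt{2/\pi}>1/2$, so $\theta_n>1/2$ for every $n\geq 1$. The main technical input is the Davidson--Szarek bound on $\E[\sigma_{\max}(\bfG)]$; everything else is a routine application of Gaussian Lipschitz concentration and a union bound.
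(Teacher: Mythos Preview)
Your argument is correct and follows exactly the route the paper points to: it omits the proof and refers to Lemma~3.2 of \cite{Donoho06}, which proceeds via the Gaussian representation $X_j=\sqn\,G_j/\|G_j\|$, the Davidson--Szarek bound $\E\sigma_{\max}(\bfG)\leq\sqn+\sqrt{p}$, and Gaussian Lipschitz concentration for both $\sigma_{\max}(\bfG)$ and the norms $\|G_j\|$. Your deterministic lower bound via the trace-rank inequality and your verification that $\theta_n\in(1/2,1]$ complete the argument cleanly.
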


\begin{remark}
In this lemma, the distribution of $\bfX$ is assumed to be the uniform measure over
all $n\times p$ matrices with normalized columns. The result shows that 
the bounds for $\phi_{\max}=\Lambda_{\max}(\bfC)$ apply to most of such matrices, with nearly full measure
as $\exp({\epsilon^2}n/2)\gg p\to\infty$.
\end{remark}

A proof of Lemma~\ref{lm:LmaxofC} can be constructed 
using a similar argument as the one in the proof of Lemma 3.2 in \cite{Donoho06}
and thus is omitted here for brevity. 

\subsection{Proofs}\label{sec:apxproofs_sec3}

\begin{proof}[Proof of Lemma~\ref{lm:hsigma}]
This result can be obtained by a straightforward generalization of Lemma 8 in \cite{Zhou14}.
\end{proof}

\begin{proof}[Proof of Theorem~\ref{thm:finiteunkvar}]
For $\hsigma \in \scrS(\eta_3)$, we have
\begin{equation*}
\frac{1}{1+\eta_3} \leq \frac{\sigma}{\hsigma} \leq 1+ \eta_3,
\end{equation*}
and thus $\|(\sigma/\hsigma) U^*_{(j)} \| \leq (1+\eta_3) \|U^*_{(j)} \|$ for all $j$. Consequently, $\calE^*(\eta_3)$ implies that $\|(\sigma/\hsigma) U^*_{(j)} \| \leq w_j \lambda/2$ for $j=1,\ldots,J$ and therefore, by Theorem~\ref{thm:grouplasso}, $\|\sdelta_0\|_{\calG,2}\leq r_n h_1$. On the other hand, $\calE^*(\eta_3)$ obviously implies $\calE^*$ \eqref{eq:defcalE*} and thus $\|\shdelta\|_{\calG,2}\leq r_n h_1$. 
By \eqref{eq:groupstrconvex} with $\alpha=\alp^*=2$,
\begin{align*}
\kappa_1 \|\sdelta_0-\shdelta\|^2 & \leq \frac{1}{n}\|\bfX(\sdelta_0-\shdelta)\|^2 
+ \kappa_2 \rho^2(2) \|\sdelta_0-\shdelta\|^2_{\calG,2} \\
& \leq \frac{4 r_n^2 h_3}{n} + \kappa_2 \rho^2(2)(2 r_n h_1)^2,
\end{align*}
where \eqref{eq:XDeltaforsigma} and triangle inequality are used in the second line. Then \eqref{eq:boundDeltaforhsigma} follows since the above inequality holds on $\calE^*(\eta_3)$ for any $\hsigma \in \scrS(\eta_3)$.
\end{proof}

\begin{proof}[Proof of Theorem~\ref{thm:asympestvar}]
Following the same reasoning in the proof of Theorem~\ref{thm:asympknownvar}, all assumptions on $\bfX$ in Theorem~\ref{thm:finiteunkvar} are satisfied with high probability and $r_n^2\rho^2(2) h_1^2=o(1)$. A consequence of assumption \eqref{eq:asymcondhsigma} is that $\Prob(\{\hsigma \in \scrS(\eta_3)\})\to 1$ 
for some $\eta_3\ll\{q_0(p_{\max}\vee\log J)\}^{-1}$, which then implies
${r_n^2h_3}/{n}=O(r_n^2\lambda^2 q_0 \eta_3) =o(1)$.
By Theorem~\ref{thm:finiteunkvar}, with probability tending to one, 
\begin{equation*}
\Prob\left[\|\sdelta_0-\shdelta\|^2\leq \frac{4r_n^2}{\kappa_1 n}\left.\left\{h_3+\kappa_2n\rho^2(2)h_1^2\right\}=o(1) \,\right| \bfX,\hsigma\right]\geq \Prob\left\{\calE^*(\eta_3) \mid \bfX,\hsigma\right\}.
\end{equation*}
Since \eqref{eq:asymcondhsigma} implies that $\hsigma\to\sigma$ in probability, 
choosing a suitable $\lambda \asymp \{(p_{\max} \vee \log J)/n\}^{1/2}$ 
guarantees that $\Prob\left\{\calE^*(\eta_3) \mid \bfX,\hsigma\right\}\geq 1-2J^{1-a}\to 1$ with high probability. 
Thus, for every $\epsilon>0$,
\begin{equation}\label{eq:diff1}
\Prob(\|\sdelta_0-\shdelta\| > \epsilon/2 \mid \bfX,\hsigma)=o_p(1).
\end{equation}
Since Theorem~\ref{thm:asympknownvar} applies for every $\sigma>0$, 
it follows from \eqref{eq:DeltatoinP0} that
\begin{equation}\label{eq:diff2}
\Prob(\|\sdelta-\shdelta\| > {\epsilon}/{2} \mid \bfX,\tdbeta,\hsigma)=o_p(1).
\end{equation}
Now \eqref{eq:DeltatoinP0_hsigma} follows immediately from \eqref{eq:diff1} and \eqref{eq:diff2}. 
\end{proof}

\section{Proofs of results in Section~\ref{sec:generalizations}}
\label{sec:apxproofs}

\subsection{Error bounds for the block lasso}\label{sec:proofgeneralization}

Recall that $\alp^*$ is conjugate to $\alp\geq 2$. 
We first develop error bounds for the block lasso
$\hbeta$ \eqref{eq:blocklassodef} under a sub-Gaussian error.
That is, for any fixed $\|v\|_2=1$,
\begin{equation}\label{eq:subGaussian}
\Prob(|v^\trans \varepsilon| \geq x) \leq 2\exp\left(-\frac{x^2}{2\sigma^2}\right)\quad\text{for all } x>0,
\end{equation}
where $\sigma^2>0$ is a constant.
Define the operator norm for $\bfX_{(j)} \in \R^{n \times p_j}$, $j=1,\ldots,J$, by
\begin{equation*}
\|\bfX_{(j)}\|_{\alpha\to 2}= \max_{\|\theta\|_{\alpha}=1} \|\bfX_{(j)}\theta\|, \quad\quad \theta\in\R^{p_j}.
\end{equation*}

\begin{assumption}[Block normalization]\label{as:normalization}
We assume that 
\begin{equation}\label{eq:blocknormalization}
\frac{\|\bfX_{(j)}\|_{\alpha\to 2}}{\sqn} \leq (\bar{c})^{1/2} p_{\max}^{1/2-1/\alp} \defi \bar{c}(\alp) 
\quad \text{ for } j=1,\ldots,J,
\end{equation}
where $\bar{c}$ is the same constant as in Assumption~\ref{as:design}.
\end{assumption}

This assumption will be satisfied if $\Lambda_{\max}(\bfC_{(jj)})\leq \bar{c}$, since
\begin{equation*}
\frac{\|\bfX_{(j)}\|_{\alpha\to 2}}{\sqn}
=\max_{\theta\ne 0}\frac{\|\bfX_{(j)}\theta\|}{\sqn\|\theta\|} \cdot \frac{\|\theta\|}{\|\theta\|_{\alpha}} 
 \leq \left\{\Lambda_{\max}(\bfC_{(jj)})\right\}^{1/2}\cdot p_j^{1/2-1/\alp} \leq \bar{c}(\alp).
\end{equation*}
By Proposition~\ref{prop:GaussianC1}, if the rows of $\bfX$ are independent draws from $\dnorm_p(0,\bmSigma)$, 
then with high probability, $\Lambda_{\max}(\bfC_{(jj)})\leq \bar{c}$ and thus
Assumption~\ref{as:normalization} holds. This shows that it is indeed a reasonable and mild assumption.

The following theorem generalizes Theorem~\ref{thm:grouplasso} to the use of the $(1,\alp)$-group norm
for $\alp\in[2,\infty]$ based on the results in \cite{Negahban12}.

\begin{theorem}\label{thm:generalization}
Consider the model \eqref{eq:model} with sub-Gaussian error $\varepsilon$ \eqref{eq:subGaussian} and
$J\geq 2$. Let $\alpha \in [2,\infty]$ and $\frac{1}{\alpha}+\frac{1}{\alpha^*}=1$.
Suppose that $|G(\beta_0)|=q_0$ and that the design matrix $\bfX$ 
satisfies \eqref{eq:groupstrconvex} and Assumption~\ref{as:normalization}.
Then on the event 
$\calE_{\alpha}= \cap_{j=1}^J\{\|U_{(j)}\|_{\alpha^*} \leq \lambda/2\}$,
we have
\begin{align}
\|\hbeta-\beta_0\| & \leq \frac{6\lambda\surd{q_0}}{\kappa_1} \defi{\tau_{\alpha}}, \label{eq:L2normgroup_alpha}\\
\|\hbeta-\beta_0\|_{\calG,\alpha} & \leq \frac{24 \lambda q_0}{\kappa_1} \defi h_{\alp,1}, \label{eq:groupnorm_alpha}
\end{align}
for any $\hbeta$ defined by \eqref{eq:blocklassodef}, when
\begin{equation}\label{eq:minsize}
n> \frac{32\kappa_2}{\kappa_1} q_0 p_{\max}^{2/\alpha^*-1} \left\{(5p_{\max})^{1/2}+(4\log J)^{1/2} \right\}^2.
\end{equation}
Moreover, $\Prob(\calE_{\alpha})\geq 1-2J^{-1}$ if we choose
\begin{equation}\label{eq:lamdbaforalp}
\lambda \geq 4 \sigma p_{\max}^{1/\alpha^*-1/2}
\left\{\left(\frac{p_{\max}}{n}\right)^{1/2} + \left(\frac{\bar{c}\log J}{n}\right)^{1/2}\right\}.
\end{equation}
\end{theorem}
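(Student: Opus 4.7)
The plan is to apply the unified analysis of high-dimensional $M$-estimators with decomposable regularizers in the spirit of \cite{Negahban12}, built from three ingredients: a basic inequality from optimality of $\hbeta$, the cone condition implied by the decomposability of the $(1,\alpha)$-group norm, and the restricted strong convexity already encoded in \eqref{eq:groupstrconvex}. With $\Delta=\hbeta-\beta_0$ and $U=\bfX^{\trans}\varepsilon/n$, optimality in \eqref{eq:blocklassodef} yields
\begin{equation*}
\frac{1}{2n}\|\bfX\Delta\|^2 \leq U^{\trans}\Delta + \lambda\bigl(\|\beta_0\|_{\calG,\alpha}-\|\hbeta\|_{\calG,\alpha}\bigr).
\end{equation*}
On the event $\calE_{\alpha}$, a block-wise H\"older bound gives $|U^{\trans}\Delta|\leq\sum_j\|U_{(j)}\|_{\alpha^*}\|\Delta_{(j)}\|_{\alpha}\leq (\lambda/2)\|\Delta\|_{\calG,\alpha}$. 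Decomposability of the regularizer over the active set $A_0$ and its complement yields $\|\beta_0\|_{\calG,\alpha}-\|\hbeta\|_{\calG,\alpha}\leq \|\Delta_{(A_0)}\|_{\calG,\alpha}-\|\Delta_{(A_0^c)}\|_{\calG,\alpha}$. Since the left side is nonnegative, combining these two bounds enforces the cone condition $\|\Delta_{(A_0^c)}\|_{\calG,\alpha}\leq 3\|\Delta_{(A_0)}\|_{\calG,\alpha}$, and in particular $\|\Delta\|_{\calG,\alpha}\leq 4\|\Delta_{(A_0)}\|_{\calG,\alpha}$.

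The next step links this back to $\|\Delta\|_2$. Because $\alpha\geq 2$ implies $\|\Delta_{(j)}\|_{\alpha}\leq\|\Delta_{(j)}\|_2$, Cauchy--Schwarz across the $q_0$ active groups gives $\|\Delta_{(A_0)}\|_{\calG,\alpha}\leq \sqrt{q_0}\,\|\Delta\|$, so $\|\Delta\|_{\calG,\alpha}\leq 4\sqrt{q_0}\,\|\Delta\|$. Plugging this into \eqref{eq:groupstrconvex}, using $\rho(\alpha^*)\leq p_{\max}^{1/\alpha^*-1/2}\rho(2)$ (H\"older within each block, since $\alpha^*\leq 2$) together with \eqref{eq:rho2}, condition \eqref{eq:minsize} is precisely what is needed to ensure $16\kappa_2 q_0\rho^2(\alpha^*)\leq \kappa_1/2$, and therefore
\begin{equation*}
\frac{1}{n}\|\bfX\Delta\|^2 \geq \frac{\kappa_1}{2}\|\Delta\|^2.
\end{equation*}
Substituting back into the basic inequality and using $\|\Delta\|_{\calG,\alpha}\leq 4\sqrt{q_0}\|\Delta\|$ yields $(\kappa_1/4)\|\Delta\|^2\leq (3\lambda/2)\sqrt{q_0}\,\|\Delta\|$, which gives \eqref{eq:L2normgroup_alpha}, and then \eqref{eq:groupnorm_alpha} follows by multiplying by $4\sqrt{q_0}$.

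It remains to show $\Prob(\calE_{\alpha})\geq 1-2J^{-1}$. The bound $\|U_{(j)}\|_{\alpha^*}\leq p_{\max}^{1/\alpha^*-1/2}\|U_{(j)}\|_2$ reduces the problem to controlling $\|\bfX_{(j)}^{\trans}\varepsilon\|/n$; since this map is $\|\bfX_{(j)}\|_{2\to 2}$-Lipschitz in $\varepsilon$ and Assumption~\ref{as:normalization} at $\alpha=2$ gives $\|\bfX_{(j)}\|_{2\to 2}\leq \sqrt{n\bar{c}}$, a standard sub-Gaussian concentration yields a per-block tail bound of order $\sigma\sqrt{\bar{c}}\bigl(\sqrt{p_{\max}/n}+\sqrt{t/n}\bigr)$ with probability at least $1-2e^{-t}$. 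Choosing $t\asymp \log J$ and union-bounding over $j\in\N_J$ then absorbs all threshold constants into the prescription \eqref{eq:lamdbaforalp}. The main obstacle is the delicate interplay in the second paragraph between the dimension factor $p_{\max}^{2/\alpha^*-1}$ hidden inside $\rho^2(\alpha^*)$ and the correction term in \eqref{eq:groupstrconvex}: condition \eqref{eq:minsize} is exactly the scaling required to absorb this correction into half of $\kappa_1\|\Delta\|^2$, and careful tracking of the constants across the whole argument is what distinguishes the statement for general $\alpha\in[2,\infty]$ from a routine restatement of the $\alpha=2$ case covered by Theorem~\ref{thm:grouplasso}.
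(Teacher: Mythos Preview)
Your derivation of the error bounds \eqref{eq:L2normgroup_alpha}--\eqref{eq:groupnorm_alpha} is correct and matches the paper's approach exactly: the paper invokes Corollary~1 of \cite{Negahban12} after verifying restricted strong convexity with $\kappa_{\calL}=\kappa_1/2$ and compatibility constant $\Psi(\calM)\leq\sqrt{q_0}$, while you unpack that corollary explicitly via the basic inequality, the cone condition, and the bound $\|\Delta\|_{\calG,\alpha}\leq 4\sqrt{q_0}\,\|\Delta\|$---the logic and the constants are identical.

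One caveat on the probability bound: Lipschitz concentration is \emph{not} a standard result for general sub-Gaussian vectors (it is special to Gaussian or strongly log-concave measures), so your tail bound on $\|U_{(j)}\|_2$ really needs a covering or chaining argument over the unit $\ell_2$-sphere in $\R^{p_j}$ rather than a bare appeal to the Lipschitz constant. The paper instead bounds $\|U_{(j)}\|_{\alpha^*}$ directly, using the operator norm $\|X_{(j)}\|_{\alpha\to 2}\leq \sqrt{n}\,\bar c(\alpha)$ from Assumption~\ref{as:normalization} inside a modification of Lemma~5 in \cite{Negahban12}; this yields the constants in \eqref{eq:lamdbaforalp} on the nose, whereas your route through $\|U_{(j)}\|_2$ (and Assumption~\ref{as:normalization} at $\alpha=2$) picks up an extra $\sqrt{\bar c}$ on the $\sqrt{p_{\max}/n}$ term and so does not quite recover the stated threshold.
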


To prove this theorem, 
we establish in the following lemma an upper bound for $\rho(\alpha^*)$ \eqref{eq:rhostar}, which will be 
used in the proofs of a few other key results.

\begin{lemma}\label{lm:rhostar}
Let $J\geq 2$ and $p_{\max}\geq 1$. Then for $\alpha^*\in[1,2]$,
\begin{equation}\label{eq:boundrhostar}
\rho(\alpha^*)\leq p_{\max}^{1/\alpha^*-1/2} 
\left\{\left(\frac{5p_{\max}}{n}\right)^{1/2} + \left(\frac{4\log J}{n}\right)^{1/2}\right\}.
\end{equation}
\end{lemma}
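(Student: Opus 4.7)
The plan is to bound $\rho(\alpha^*)$ in two stages: first reduce from the $\ell_{\alpha^*}$ norm to the $\ell_2$ norm by a power-mean inequality, and then bound the expected maximum of the resulting $\chi$-distributed magnitudes by Gaussian concentration plus a union bound. Since $\alpha^*\in[1,2]$ implies $1/\alpha^*-1/2\ge 0$, the standard inequality $\|v\|_{\alpha^*}\le m^{1/\alpha^*-1/2}\|v\|_2$ for $v\in\R^m$ (Hölder, or the power-mean inequality) applied to $v=Z_{(j)}$ with $m=p_j\le p_{\max}$ yields $\max_j \|Z_{(j)}\|_{\alpha^*} \le p_{\max}^{1/\alpha^*-1/2} \max_j \|Z_{(j)}\|_2$. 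So it suffices to establish
\[
\E\Bigl[\max_{j\in\N_J}\|Z_{(j)}\|_2\Bigr] \;\le\; \sqrt{5\,p_{\max}} + \sqrt{4\log J},
\]
and then divide by $\sqn$ and absorb the prefactor $p_{\max}^{1/\alpha^*-1/2}$ to obtain \eqref{eq:boundrhostar}.

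For the second stage I would use that, for each $j$, the map $Z\mapsto \|Z_{(j)}\|_2$ is $1$-Lipschitz in $Z\in\R^p$ with mean bounded by Jensen, $\E\|Z_{(j)}\|_2 \le (\E\|Z_{(j)}\|_2^2)^{1/2} = \sqrt{p_j}\le\sqrt{p_{\max}}$. The Borell--Tsirelson--Ibragimov--Sudakov concentration inequality for $1$-Lipschitz functions of a standard Gaussian gives the sub-Gaussian tail
\[
\Prob\bigl(\|Z_{(j)}\|_2 \ge \sqrt{p_{\max}} + t\bigr) \le e^{-t^2/2},\qquad t>0,
\]
and a union bound over $j=1,\dots,J$ upgrades this to $\Prob(M\ge \sqrt{p_{\max}}+t)\le J e^{-t^2/2}$ for $M=\max_j\|Z_{(j)}\|_2$. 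Integrating the tail and splitting the integral at $t_0=\sqrt{2\log J}$ gives
\[
\E M \;\le\; \sqrt{p_{\max}} + \int_0^\infty \min\bigl(1,J e^{-t^2/2}\bigr)\,dt \;\le\; \sqrt{p_{\max}} + \sqrt{2\log J} + \frac{1}{\sqrt{2\log J}},
\]
where the bound on the Gaussian tail integral $\int_{t_0}^\infty Je^{-t^2/2}dt\le J e^{-t_0^2/2}/t_0 = 1/\sqrt{2\log J}$ is standard (Mills' ratio).

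It remains to check $\sqrt{p_{\max}}+\sqrt{2\log J}+(2\log J)^{-1/2} \le \sqrt{5\,p_{\max}}+\sqrt{4\log J}$ under $J\ge 2$ and $p_{\max}\ge 1$. Rearranged, this asks $(2\log J)^{-1/2} \le (\sqrt{5}-1)\sqrt{p_{\max}} + (2-\sqrt{2})\sqrt{\log J}$, and the left side is decreasing in $J$ while the right is nondecreasing in both $J$ and $p_{\max}$, so it suffices to check the worst case $J=2$, $p_{\max}=1$, where the left side is $(2\log 2)^{-1/2}\approx 0.85$ and the right side is $(\sqrt{5}-1)+(2-\sqrt 2)\sqrt{\log 2}\approx 1.72$. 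The only real subtlety is this final constant bookkeeping: the proof has modest slack in the Borell bound (using $\sqrt{p_{\max}}$ rather than $\E\chi_{p_j}$), in the union bound, and in the Mills' ratio estimate, and one has to verify that the clean constants $5$ and $4$ in \eqref{eq:boundrhostar} absorb all of this slack uniformly over $J\ge 2$ and $p_{\max}\ge 1$. Everything else is routine.
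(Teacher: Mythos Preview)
Your proof is correct and the reduction from $\ell_{\alpha^*}$ to $\ell_2$ is exactly the paper's final step. The difference is in how you bound $\E\max_j\|Z_{(j)}\|_2$. The paper works with the squares: it sets $V_j\sim\chi^2_{p_{\max}}$, $V=\max_j V_j$, bounds $\sqrt{n}\rho(2)\le(\E V)^{1/2}$ by Jensen, and controls $\E V$ via the Laurent--Massart $\chi^2$ tail bound to get $(\E V)^{1/2}\le\sqrt{5p_{\max}}+\sqrt{4\log J}$. You instead bound the non-squared quantity directly via Borell--TIS concentration for the $1$-Lipschitz map $Z\mapsto\|Z_{(j)}\|_2$, then integrate the union-bound tail with a Mills' ratio estimate. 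Your route is arguably the more natural one: it avoids the detour through squares-plus-Jensen and needs only the standard Gaussian Lipschitz inequality rather than the sharper $\chi^2$ concentration result. The paper's approach, on the other hand, gives the intermediate bound on $\E V$ itself (not just its square root), which is occasionally useful elsewhere. Both land on the same constants with a little slack to spare, and your endpoint check at $J=2$, $p_{\max}=1$ is valid.
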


\begin{proof}
We first bound $\rho(2)$. 
Let $V_1,\ldots,V_J$ be independent $\chi^2_d$ random variables with $d=p_{\max}\geq 1$, and put $V=\max_j V_j$.
Then by definition \eqref{eq:rhostar}, we have
\begin{equation*}
\sqn\rho(2) \leq \E(V^{1/2}) \leq (\E V)^{1/2}
\end{equation*}
due to the concavity of $(\cdot)^{1/2}$. By (4.3) in \cite{Laurent00}, for any $x>0$,
\begin{equation}\label{eq:chi2bound}
\Prob(V_1-d \geq 2\surd(d x) + 2x) \leq \exp(-x).
\end{equation}
For $x\geq d$, $2\surd(d x) + 2x \leq 4 x$ and thus
\begin{equation*}
\Prob(V_1-d \geq 4 x) \leq \Prob(V_1-d \geq 2\surd(d x) + 2x) \leq \exp(-x).
\end{equation*}
Equivalently, 
\begin{equation}\label{eq:chi2bound2}
\Prob(V_1-d \geq z) \leq  \exp(-z/4)
\end{equation}
for $z=4x \geq 4d$. Now for every $\theta\geq 0$ we have
\begin{align*}
\E(V)&  \leq 5d + \theta + J \int_{5d + \theta}^{\infty} \Prob(V_1\geq t) dt \\
& \leq 5d + \theta + 4 e^{-d} J \exp(-\theta/4),
\end{align*}
where \eqref{eq:chi2bound2} is used in the second inequality.
Taking $\theta = 4 \log J$ gives
\begin{equation*}
(\E V)^{1/2} \leq (5d + 4\log J + 4 e^{-d})^{1/2} \leq (5d)^{1/2} + (4\log J)^{1/2}
\end{equation*}
when $J\geq 2$ and thus,
\begin{equation}\label{eq:boundrho2}
\rho(2)\leq \left\{\left(\frac{5p_{\max}}{n}\right)^{1/2} + \left(\frac{4\log J}{n}\right)^{1/2}\right\}.
\end{equation}

For $\alpha^*\in[1,2]$ and $v\in\R^m$, 
\begin{equation*}
\frac{\|v\|_{\alpha^*}}{m^{1/\alpha^*}} \leq \frac{\|v\|}{{m}^{1/2}}.
\end{equation*}
Thus, by definition \eqref{eq:rhostar}
\begin{align*}
\rho(\alpha^*) &\leq  \E\left\{\max_{j=1,\ldots,J} (p_j)^{1/\alpha^*-1/2} \|Z_{(j)}\|/\sqn\right\} \\
& \leq p_{\max}^{1/\alpha^*-1/2} \rho(2),
\end{align*}
which with the bound \eqref{eq:boundrho2} implies \eqref{eq:boundrhostar}.
\end{proof}

\begin{proof}[Proof of Theorem~\ref{thm:generalization}]
The desired results follow from Corollary 1 in \cite{Negahban12}. Define
\begin{align*}
\calM& =\left\{\beta\in\R^p: \beta_{(j)}=0,\text{ for all } j \notin A_0\right\}, \\
\scrC_{\alpha} & = \left\{\Delta \in \R^p: \sum_{j\notin A_0}\|\Delta_{(j)}\|_{\alpha} 
\leq 3 \sum_{j\in A_0}\|\Delta_{(j)}\|_{\alpha}\right\}.
\end{align*}
We verify that the restricted strong convexity condition \citep{Negahban12} holds
over $\scrC_{\alpha}$. This amounts to verifying that, for $\Delta\in \scrC_{\alpha}$,
\begin{equation}\label{eq:RSC}
\frac{1}{n}\|\bfX\Delta\|^2 \geq \kappa_{\calL} \|\Delta\|^2,
\end{equation}
where $\kappa_{\calL}$ is a positive constant. Since
\begin{equation*}
\|\Delta\|_{\calG,\alpha} \leq 4 \sum_{A_0} \|\Delta_{(j)}\|_{\alpha} \leq 4 q_0^{1/2} \|\Delta\|
\end{equation*}
for $\Delta \in \scrC_{\alpha}$ and $\alp\geq 2$, it follows from \eqref{eq:groupstrconvex} that
\begin{equation*}
\frac{1}{n}\|\bfX\Delta\|^2 \geq [\kappa_{1} -16\kappa_2 q_0 \rho^2(\alpha^*) ]\|\Delta\|^2.
\end{equation*}
Simple algebra with \eqref{eq:boundrhostar} shows that $16\kappa_2 q_0 \rho^2(\alpha^*)<\kappa_1/2$ 
when \eqref{eq:minsize} holds, which leads to $\kappa_{\calL}=\kappa_1/2$ in \eqref{eq:RSC}.
All the other assumptions of Corollary 1 in \cite{Negahban12} can be verified as in the proof of their
Corollary 4. Then \eqref{eq:L2normgroup_alpha} and \eqref{eq:groupnorm_alpha} follow immediately 
by plugging in $\kappa_{\calL}=\kappa_1/2$ and the compatibility constant $\Psi(\calM)\leq \surd q_0$.
Thus we have shown that \eqref{eq:L2normgroup_alpha} and \eqref{eq:groupnorm_alpha} hold on 
the event $\calE_{\alpha}$. It remains to show the lower bound for $\Prob(\calE_{\alpha})$ with 
the choice of $\lambda$ in \eqref{eq:lamdbaforalp}. With some modifications of the proof of Lemma 5
in \cite{Negahban12}, one can show that
\begin{equation}\label{eq:chi2boundUj}
\Prob\left\{\frac{\|U_{(j)}\|_{\alp^*}}{\sigma} \geq \frac{2p_{\max}^{1-1/\alpha}}{\sqn} + \theta \right\}
\leq 2 \exp\left\{-\frac{n\theta^2}{2 \bar{c}(\alp)^2}\right\}
\end{equation}
for all $j\in\N_J$ and $\theta>0$ under Assumption~\ref{as:normalization}. Setting 
\begin{equation*}
\theta=2\bar{c}(\alp){(\log J/n)^{1/2}}=2 p_{\max}^{1/\alpha^*-1/2} {(\bar{c}\log J/n)^{1/2}}
\end{equation*}
and applying the union bound over all $j \in \N_J$, we have
\begin{align*}
\Prob(\calE_{\alp}^c) &\leq 
\Prob\left[\max_{j=1,\ldots,J}\|U_{(j)}\|_{\alp^*} \geq 
2 \sigma p_{\max}^{1/\alpha^*-1/2}\left\{\left(\frac{p_{\max}}{n}\right)^{1/2} + \left(\frac{\bar{c}\log J}{n}\right)^{1/2}\right\}\right] \\
& \leq 2J \exp(-2 \log J)=2J^{-1},
\end{align*}
which completes the proof.
\end{proof}

\subsection{Proofs of Theorems~\ref{thm:consistency_alpha} and \ref{thm:consistency_inf}}
\label{sec:proofalphacon}

Generalize the definition of $V(\delta; \beta_0, U)$ in \eqref{eq:defVn} to 
\begin{align}\label{eq:defValpha}
V_{\alpha}(\delta; \beta_0, U) &= \frac{n}{2r_n^2} \delta^{\trans} \bfC \delta - \frac{n}{r_n}\delta^{\trans}U\nonumber\\
&\quad+ n\lambda \sum_{j \in A_0} w_j\left(\| \beta_{0(j)}+ r_n^{-1}\delta_{(j)}\|_{\alp}- \|\beta_{0(j)}\|_{\alp}\right) 
+ \frac{n\lambda}{r_n} \sum_{j \notin A_0} w_j\|\delta_{(j)} \|_{\alp}, 
\end{align}
which is identical to the penalized loss function in \eqref{eq:blocklassodef}, up to an additive constant,
when $w_j=1$ for all $j$.
Suppose $\nu[\varepsilon^*]=\nu[\varepsilon]$. Letting $U^*=\frac{1}{n}\bfX^{\trans}\varepsilon^*$, we have
\begin{align}
\shdelta &=r_n(\hbeta^*-\beta_0) \in\argmin_{\delta} V_{\alp}(\delta;\beta_0,U^*), \label{eq:defshdelta_alp}\\
\sdelta &=r_n(\sbeta-\tdbeta) \in \argmin_{\delta} V_{\alp}(\delta;\tdbeta,U^*). \label{eq:defsdelta_alp}
\end{align}

We first bound the eigenvalues of the Hessian of the $\ell_\alpha$ norm, and then generalize
a few lemmas in Section~\ref{sec:overview} before our proof of Theorems~\ref{thm:consistency_alpha} 
and \ref{thm:consistency_inf}.

\begin{lemma}\label{lm:Hessian}
Let $f(x)=\|x\|_{\alpha}$ and $\bfH_f(x)$ be the Hessian of $f$ evaluated at $x=(x_j)_{1:m} \in \R^m$ when exists.
If $\alpha\in[2,\infty)$ and $x \ne 0$, then
\begin{equation}\label{eq:boundHf}
0 \leq \Lambda_{\min}\{\bfH_f(x)\} \leq \Lambda_{\max}\{\bfH_f(x)\} \leq \frac{\alpha-1}{\|x\|_{\alpha}}\cdot I(m>1).
\end{equation}
\end{lemma}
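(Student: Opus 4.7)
The plan is to compute the Hessian of $f(x)=\|x\|_{\alpha}$ explicitly and exhibit it as a diagonal matrix minus a rank-one PSD correction, from which both bounds drop out.

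First I would dispose of the degenerate case $m=1$: here $f(x)=|x_1|$ is affine in a neighborhood of any $x\ne 0$, so $\bfH_f(x)\equiv 0$, accounting for the $I(m>1)$ factor. So from here on assume $m>1$.

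Next I would compute the gradient directly. For $\alpha\geq 2$ the map $t\mapsto t|t|^{\alpha-2}$ is $C^1$, so at any $x\ne 0$ we have $\partial_i f(x)=x_i|x_i|^{\alpha-2}\big/\|x\|_{\alpha}^{\alpha-1}$. A second differentiation (assuming, as the statement does, that the Hessian exists at $x$) yields the decomposition
\begin{equation*}
\bfH_f(x)=\frac{\alpha-1}{\|x\|_{\alpha}^{\alpha-1}}\!\left(D(x)-\frac{v(x)v(x)^{\trans}}{\|x\|_{\alpha}}\right),
\end{equation*}
where $D(x)=\diag(|x_i|^{\alpha-2})$ and $v(x)_i=\sgn(x_i)|x_i|^{\alpha-1}$. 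The key identity in this step is $\partial_i(x_i|x_i|^{\alpha-2})=(\alpha-1)|x_i|^{\alpha-2}$, which is where the factor $\alpha-1$ enters.

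With the decomposition in hand, the lower bound $\Lambda_{\min}\{\bfH_f(x)\}\geq 0$ is immediate: $f$ is a norm, hence convex, so its Hessian is PSD wherever it exists. For the upper bound I would use that $v(x)v(x)^{\trans}/\|x\|_{\alpha}$ is PSD, so subtracting it from $D(x)$ can only decrease the largest eigenvalue in the Loewner order. Therefore
\begin{equation*}
\Lambda_{\max}\{\bfH_f(x)\}\leq \frac{\alpha-1}{\|x\|_{\alpha}^{\alpha-1}}\,\Lambda_{\max}\{D(x)\}=\frac{\alpha-1}{\|x\|_{\alpha}^{\alpha-1}}\max_{i}|x_i|^{\alpha-2}.
\end{equation*}
Finally, since $\alpha\geq 2$, the monotonicity inequality $\|x\|_{\infty}\leq \|x\|_{\alpha}$ gives $\max_i|x_i|^{\alpha-2}\leq \|x\|_{\alpha}^{\alpha-2}$, which collapses the right-hand side to $(\alpha-1)/\|x\|_{\alpha}$, as claimed.

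There is no real obstacle here; the only mild subtlety is handling the endpoint $\alpha=2$ (where $|x_i|^{\alpha-2}=1$ with the convention $0^{0}=1$, and the formula reduces to the well-known $\bfH_f(x)=\|x\|_2^{-1}(I-xx^{\trans}/\|x\|_2^2)$), and being clear that the Hessian is only asserted to exist for $\alpha\in[2,\infty)$ at points where the coordinates behave well (e.g.\ away from the coordinate hyperplanes for $\alpha\in(2,3)$), which the statement already builds in by the qualifier "when exists."
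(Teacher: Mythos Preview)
Your approach is essentially the same as the paper's: compute the Hessian explicitly as a positive multiple of a diagonal matrix minus a rank-one PSD correction, invoke convexity for the lower bound, and bound $\Lambda_{\max}$ by the largest diagonal entry since subtracting a PSD matrix cannot increase it. The paper normalizes slightly differently, writing $\bfH_f(x)=\frac{\alpha-1}{\|x\|_\alpha}\big(D^{\alpha-2}-zz^{\trans}/\|x\|_\alpha^{2\alpha-2}\big)$ with $D=\|x\|_\alpha^{-1}\diag(|x_i|)$, so that $\Lambda_{\max}(D^{\alpha-2})\leq 1$ is immediate; your route via $\max_i|x_i|^{\alpha-2}\leq \|x\|_\alpha^{\alpha-2}$ is equivalent.

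One small correction: in your displayed decomposition the denominator of the rank-one term should be $\|x\|_\alpha^{\alpha}$ rather than $\|x\|_\alpha$, i.e.
\[
\bfH_f(x)=\frac{\alpha-1}{\|x\|_\alpha^{\alpha-1}}\left(D(x)-\frac{v(x)v(x)^{\trans}}{\|x\|_\alpha^{\alpha}}\right).
\]
This slip does not affect your argument, since you only use that the subtracted term is PSD.
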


\begin{proof}
Let $z=(z_j)_{1:m}=(x_j |x_j|^{\alpha-2})_{1:m}$. 
For $\alp\geq 2$, straightforward calculations lead to
\begin{equation}\label{eq:gradoff}
\nabla f (x)= \|x\|_{\alpha}^{1-\alpha} z
\end{equation}
and second-order partial derivatives
\begin{align*}
\frac{\partial}{\partial x_j}(\|x\|_{\alpha}^{1-\alpha} z_j) 
& = (\alpha-1) \|x\|_{\alpha}^{1-2\alpha} \left(|x_j|^{\alpha-2} \|x\|_{\alpha}^{\alpha} - z_j^2 \right), \\
\frac{\partial}{\partial x_i}(\|x\|_{\alpha}^{1-\alpha} z_j) 
& = -(\alpha-1) \|x\|_{\alpha}^{1-2\alpha} z_i z_j,\quad i\ne j.
\end{align*}
Let $\bfD=\|x\|_{\alpha}^{-1}\diag(|x_1|,\ldots,|x_m|)$. Then for $x\ne 0$, 
\begin{equation}\label{eq:Hf}
\bfH_f(x)=\frac{\alpha-1}{\|x\|_{\alpha}} \left( \bfD^{\alpha-2}- \frac{z z^{\trans}}{\|x\|_{\alpha}^{2\alpha-2}}\right).
\end{equation}
Since $x_i\ne 0$ for some $i$, $\bfD \geq 0$ is positive semi-definite and $|x_j|/\|x\|_{\alpha}\leq 1$ for all $j$. 
By definition $zz^{\trans}\geq 0$. Therefore,
\begin{equation*}
\Lambda_{\max}(\bfH_f(x)) \leq \frac{\alpha-1}{\|x\|_{\alpha}}(\Lambda_{\max}(\bfD))^{\alpha-2} 
\leq \frac{\alpha-1}{\|x\|_{\alpha}}.
\end{equation*}
On the other hand, $\Lambda_{\min}(\bfH_f)\geq 0$ is a direct consequence of the convexity of
$f$. When $m=1$, $f(x)=|x|$ and $\bfH_f(x)=0$ for $x\ne 0$.
This completes the proof. 
\end{proof}

\begin{remark}
If $x_i,x_j=\pm 1$ for some $i\ne j$ and $x_k=0$ for all $k\ne i,j$, then 
\begin{equation*}
\Lambda_{\max}(\bfH_f(x))= 2^{2/\alpha -1} \cdot \frac{\alpha-1}{\| x\|_{\alpha}} 
\geq \frac{1}{2}\cdot \frac{\alpha-1}{\|x\|_{\alpha}}
\end{equation*}
for $\alpha\geq 2$. Thus, the upper bound in \eqref{eq:boundHf} can be decreased by 
no more than a factor of $2$.
\end{remark}

Recall that $M_2>0$ is a constant. For $\eta\in(0,1)$, define 
\begin{align}\label{eq:defA01_alp}
A_{01}=A_{01}(\eta)= \left\{j\in A_0: \frac{\|\beta_{0(j)}\|_{\alpha}}{\sqpj}> \frac{M_2}{r_n(1-\eta)}\right\}
\end{align}
and $A_{02}=A_0\setminus A_{01}$.
For $\alpha\in[2,\infty]$ define
\begin{align}
\eta_{\alpha,1}  & = \sup_{j \in A_{01}} \frac{\|\tdbeta_{(j)}-\beta_{0(j)} \|}{\| \beta_{0(j)}\|_{\alpha}},  
\label{eq:difftdbeta_alpha} \\
\eta_{\alpha,2} & = \sup_{j\in A_{01}} \frac{M_2\sqpj}{r_n \|\beta_{0(j)}\|_{\alpha}}, \label{eq:defrho_alpha}
\end{align}
and
\begin{equation}\label{eq:defB0_alpha}
\scrB_{\alpha,0}=\{\tdbeta\in\R^p:G(\tdbeta)\subset A_{01} \text{ and } \eta_{\alpha,1} \leq \eta\},
\end{equation}
which generalize the definitions of $\eta_1$, $\eta_2$, and $\scrB_0$ in Section~\ref{sec:overview}.
For $\tdbeta\in\scrB_{\alpha,0}$, we have $\eta_{\alpha,1}+\eta_{\alpha,2}<1$.
The following two lemmas bound the difference in $V_{\alp}$
when $\beta_0$ is replaced by $\tdbeta$.

\begin{lemma}\label{lm:boundVforbeta_alpha}  
Assume that $\alpha\in [2,\infty)$.  
Then for any $\tdbeta\in\scrB_{\alpha,0}$, $u\in \R^p$ and $\delta \in \scrD(M_2)$ 
as defined in \eqref{eq:defdeltaregion},
\begin{align}\label{eq:boundVforbeta_alpha}
&|V_{\alpha}(\delta; \tdbeta, u) - V_{\alpha}(\delta;\beta_0,u)| \nonumber\\
&\quad\quad\leq \frac{(\alpha-1)M_2 n \lambda (\eta_{\alpha,1}+\eta_{\alpha,2})}{r_n(1-\eta_{\alpha,1}-\eta_{\alpha,2})}
\sum_{A_{01}} w_j \sqpj I(p_j>1) + 2n \lambda\sum_{A_{02}} w_j \|\beta_{0(j)}\|_{\alpha}
\defi h_{\alpha,2}.
\end{align}
\end{lemma}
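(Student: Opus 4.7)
The plan is to reduce the difference $V_\alpha(\delta; \tdbeta, u) - V_\alpha(\delta; \beta_0, u)$ to a sum of group-wise penalty differences and estimate each using smoothness of $\|\cdot\|_\alpha$ away from the origin. Since $G(\tdbeta) \subset A_{01} \subset A_0$, the quadratic and linear parts of $V_\alpha$ do not depend on the first argument, and for each $j \notin A_0$ we have $\tdbeta_{(j)} = \beta_{0(j)} = 0$, so the corresponding penalty terms cancel. Writing $F_j(\beta) := \|\beta + r_n^{-1}\delta_{(j)}\|_\alpha - \|\beta\|_\alpha$, the difference reduces to
\[
V_\alpha(\delta; \tdbeta, u) - V_\alpha(\delta; \beta_0, u) = n\lambda \sum_{j \in A_0} w_j \bigl[ F_j(\tdbeta_{(j)}) - F_j(\beta_{0(j)}) \bigr],
\]
and I split the sum according to $A_{01}$ and $A_{02}$.

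For $j \in A_{02}$ we have $\tdbeta_{(j)} = 0$, and combining the triangle and reverse-triangle inequalities gives $|F_j(\tdbeta_{(j)}) - F_j(\beta_{0(j)})| \leq 2\|\beta_{0(j)}\|_\alpha$, which yields the second term of the stated bound. For $j \in A_{01}$ with $p_j = 1$, the definitions of $\eta_{\alpha,1} < 1$ and $A_{01}$ jointly imply $|\tdbeta_j| \geq (1 - \eta_{\alpha,1})|\beta_{0j}| > M_2/r_n \geq r_n^{-1}|\delta_j|$, and the analogous inequality holds for $\beta_{0j}$. Consequently $\tdbeta_j$, $\beta_{0j}$, $\tdbeta_j + r_n^{-1}\delta_j$, and $\beta_{0j} + r_n^{-1}\delta_j$ all share a common sign, so linearity of $|\cdot|$ on each half-line forces $F_j(\tdbeta_j) = F_j(\beta_{0j})$, explaining the indicator $I(p_j > 1)$.

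For $j \in A_{01}$ with $p_j > 1$, the plan is to apply the mean value theorem to $s \mapsto F_j(\beta(s))$ with $\beta(s) = \beta_{0(j)} + s(\tdbeta_{(j)} - \beta_{0(j)})$, and then a second MVT to the gradient difference $\nabla\|\cdot\|_\alpha(\beta(s) + r_n^{-1}\delta_{(j)}) - \nabla\|\cdot\|_\alpha(\beta(s))$. This expresses $F_j(\tdbeta_{(j)}) - F_j(\beta_{0(j)})$ as an inner product involving $\bfH_f(\xi)\, r_n^{-1}\delta_{(j)}$ for some $\xi$ on the parallelogram spanned by $\beta_{0(j)}, \tdbeta_{(j)}, r_n^{-1}\delta_{(j)}$, and Cauchy--Schwarz together with Lemma~\ref{lm:Hessian} bounds it by $(\alpha-1)\|\xi\|_\alpha^{-1} r_n^{-1}\|\delta_{(j)}\| \cdot \|\tdbeta_{(j)} - \beta_{0(j)}\|$. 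The principal obstacle is the uniform lower bound $\|\xi\|_\alpha \geq (1 - \eta_{\alpha,1} - \eta_{\alpha,2})\|\beta_{0(j)}\|_\alpha$, which I will derive from two reverse-triangle steps: $\|\beta(s)\|_\alpha \geq (1 - \eta_{\alpha,1})\|\beta_{0(j)}\|_\alpha$ via the monotonicity $\|\cdot\|_\alpha \leq \|\cdot\|_2$ for $\alpha \geq 2$ and \eqref{eq:difftdbeta_alpha}, and $r_n^{-1}\|\delta_{(j)}\|_\alpha \leq \eta_{\alpha,2}\|\beta_{0(j)}\|_\alpha$ directly from $\delta \in \scrD(M_2)$ and \eqref{eq:defrho_alpha}. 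The strict positivity of $\|\xi\|_\alpha$ simultaneously ensures that the Hessian of $\|\cdot\|_\alpha$ is well-defined on the integration path, so Lemma~\ref{lm:Hessian} applies. Substituting $\|\delta_{(j)}\| \leq M_2\sqpj$ and $\|\tdbeta_{(j)} - \beta_{0(j)}\| \leq \eta_{\alpha,1}\|\beta_{0(j)}\|_\alpha$ and summing over $j \in A_{01}$ produces the first term of the bound; in fact the argument yields $\eta_{\alpha,1}$ rather than $\eta_{\alpha,1} + \eta_{\alpha,2}$ in the numerator, which is strictly stronger than the stated inequality.
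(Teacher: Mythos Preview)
Your argument is correct and uses the same core ingredients as the paper---a reduction to group-wise penalty differences, the triangle inequality for $j\in A_{02}$, and two mean-value steps combined with the Hessian bound of Lemma~\ref{lm:Hessian} for $j\in A_{01}$---but you apply the two mean-value steps in the opposite order. The paper first applies the scalar mean value theorem in the $\delta_{(j)}$ direction twice (once at $\tdbeta_{(j)}$, once at $\beta_{0(j)}$), obtaining intermediate points $\xi_j=\tdbeta_{(j)}+c_1 r_n^{-1}\delta_{(j)}$ and $\xi_j^*=\beta_{0(j)}+c_2 r_n^{-1}\delta_{(j)}$, and then bounds $\|\nabla f(\xi_j)-\nabla f(\xi_j^*)\|$ via the Hessian along the segment $[\xi_j^*,\xi_j]$; since $\|\xi_j-\xi_j^*\|\le \|\tdbeta_{(j)}-\beta_{0(j)}\|+r_n^{-1}\|\delta_{(j)}\|$, this produces the factor $\eta_{\alpha,1}+\eta_{\alpha,2}$ in the numerator. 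You instead differentiate first in the $\tdbeta_{(j)}-\beta_{0(j)}$ direction and then in the $\delta_{(j)}$ direction, which keeps the two displacement lengths as separate multiplicative factors and yields only $\eta_{\alpha,1}$ in the numerator---a strictly sharper bound, as you note. Your direct sign argument for the $p_j=1$ case is also a nice alternative to invoking the indicator $I(m>1)$ in Lemma~\ref{lm:Hessian}; both handle that case cleanly. The lower bound $\|\xi\|_\alpha\ge(1-\eta_{\alpha,1}-\eta_{\alpha,2})\|\beta_{0(j)}\|_\alpha$ you derive is exactly the one the paper uses (cf.\ \eqref{eq:postivedifference_alpha} and the subsequent triangle-inequality step), so the Hessian is well-defined along your integration path.
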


\begin{proof}
Put $\Dtdbeta=\tdbeta-\beta_0$ and 
\begin{equation*}
\nabla_j =( \| \tdbeta_{(j)}+ r_n^{-1}\delta_{(j)}\|_{\alpha}- \| \tdbeta_{(j)}\|_{\alp} )
-(\| \beta_{0(j)}+ r_n^{-1}\delta_{(j)}\|_{\alp}- \| \beta_{0(j)}\|_{\alp}).
\end{equation*}
It follows from $G(\tdbeta)\subset A_{01}\subset A_0$ \eqref{eq:defB0_alpha} that 
\begin{equation*}
V_{\alpha}(\delta; \tdbeta, u) - V_{\alpha}(\delta;\beta_0,u)=n \lambda \sum_{A_0} w_j \nabla_j.
\end{equation*}
First consider $j\in A_{01}$. The assumption $\eta_{\alpha,1}\leq \eta$ in \eqref{eq:defB0_alpha} implies
\begin{equation*}
\eta_{\alpha,1} + \frac{M_2 \sqpj}{r_n \|\beta_{0(j)} \|_{\alpha}} <1.
\end{equation*}
With \eqref{eq:difftdbeta_alpha} and $\|\delta_{(j)}\|\leq M_2\sqpj$ by \eqref{eq:defdeltaregion},
this leads to
\begin{equation}\label{eq:postivedifference_alpha}
\|\Dtdbeta_{(j)}\|+r_n^{-1}\|\delta_{(j)}\|<\|\beta_{0(j)}\|_{\alpha}\leq \|\beta_{0(j)}\|,
\end{equation}
since $\alp\geq 2$. 
Two direct consequences of \eqref{eq:postivedifference_alpha} are 
\begin{equation*}
\|\tdbeta_{(j)}\|\geq \|\beta_{0(j)}\|-\|\Dtdbeta_{(j)}\|>\|\delta_{(j)}\|/r_n
\end{equation*} 
and $\|\beta_{0(j)}\|>\|\delta_{(j)}\|/r_n$. Therefore, the function $f(x)=\|x\|_{\alpha}$, $x\in\R^{p_j}$, is differentiable 
in the ball centered at $\tdbeta_{(j)}$ or $\beta_{0(j)}$ with radius $\|\delta_{(j)}\|/r_n$. 
By the mean value theorem,
\begin{align}
\| \tdbeta_{(j)}+ r_n^{-1}\delta_{(j)}\|_{\alpha}- \| \tdbeta_{(j)}\|_{\alp}
& =  [\nabla f(\xi_j)]^{\trans} \frac{\delta_{(j)}}{r_n} \label{eq:gradfj}\\
\| \beta_{0(j)}+ r_n^{-1}\delta_{(j)}\|_{\alp}- \| \beta_{0(j)}\|_{\alp}
& =   [\nabla f(\xi^*_j)]^{\trans} \frac{\delta_{(j)}}{r_n}, \label{eq:gradfjstar}
\end{align}
where $\xi_j=\tdbeta_{(j)}+c_1 \delta_{(j)}/r_n$ and $\xi^*_j=\beta_{0(j)}+c_2 \delta_{(j)}/r_n$ 
for some $(c_1, c_2) \in (0,1)^2$. Let $\Delta\xi_{j}=\xi_j-\xi^*_j$ and 
$v_j=\nabla f(\xi_j)-\nabla f(\xi^*_j)$.
For any $c_3\in(0,1)$,
\begin{align*}
\|\xi^*_{j} + c_3 \Delta \xi_j\|& = \|\beta_{0(j)}+c_3\Dtdbeta_{(j)}+c_4r_n^{-1}\delta_{(j)}\| \\
& \geq \|\beta_{0(j)}\|-\|\Dtdbeta_{(j)}\|-r_n^{-1}\|\delta_{(j)}\|>0,
\end{align*}
where $c_4=c_2(1-c_3)+c_1c_3\in(0,1)$ and the last inequality follows from \eqref{eq:postivedifference_alpha}. 
Thus, the mapping $\nabla f: \R^{p_j}\to \R^{p_j}$ is differentiable at $\xi^*_{j} + c \Delta \xi_j$ for every $c \in (0,1)$;
see \eqref{eq:gradoff} and \eqref{eq:Hf}. By Theorem 5.19 in \cite{Rudin76}, there exits $c_3\in(0,1)$ such that
\begin{equation*}
\|v_j\| \leq \|\bfH_f(\xi^*_{j} + c_3 \Delta \xi_j)\Delta\xi_{j}\|
\leq \Lambda_{\max}(\bfH_f(\xi^*_{j} + c_3 \Delta \xi_j))\cdot\| \Delta\xi_{j}\|.
\end{equation*}
It then follows from Lemma~\ref{lm:Hessian} that 
\begin{align}
\|v_j\|& \leq \frac{(\alp-1)I(p_j>1)\|\Delta\xi_{j}\|}{\|\xi^*_{j} + c_3 \Delta \xi_j\|_{\alp}} \nonumber \\
& =(\alp-1)I(p_j>1)
 \frac{\|\Dtdbeta_{(j)}+(c_1-c_2)r_n^{-1}\delta_{(j)}\|}{\|\beta_{0(j)}+c_3\Dtdbeta_{(j)}+c_4r_n^{-1}\delta_{(j)}\|_{\alp}}.\label{eq:upperforvj_alp}
\end{align}
Triangle inequality applied to \eqref{eq:upperforvj_alp} with $c_i\in(0,1)$ and $\alp\geq 2$ gives
\begin{align*}
\|v_j\|& \leq (\alp-1)I(p_j>1)\frac{\|\Dtdbeta_{(j)}\|+r_n^{-1}\|\delta_{(j)}\|}{\|\beta_{0(j)}\|_{\alp}-\|\Dtdbeta_{(j)}\|-r_n^{-1}\|\delta_{(j)}\|}.
\end{align*}
Together with $\|\delta_{(j)}\|\leq M_2\sqpj$ and the definitions of $\eta_{\alp,1}$ and $\eta_{\alp,2}$,
we obtain the following bound for all $j\in A_{01}$,
\begin{align}\label{eq:vjbound}
|\nabla_j|& \leq \|v_j\| \|\delta_{(j)}\|/r_n \nonumber\\
& \leq  \frac{(\alp-1) M_2(\eta_{\alpha,1} + \eta_{\alpha,2})}{r_n(1-\eta_{\alpha,1}-\eta_{\alpha,2})}
\cdot I(p_j>1)\sqpj.
\end{align}
For $j\in A_{02}$, by definition \eqref{eq:defB0_alpha} we have $\tdbeta_{(j)}=0$ and thus
\begin{equation}\label{eq:djbound2}
|\nabla_j|\leq \| r_n^{-1}\delta_{(j)}-(\beta_{0(j)}+ r_n^{-1}\delta_{(j)})\|_{\alp} + \| \beta_{0(j)}\|_{\alp} 
=2\| \beta_{0(j)}\|_{\alp}
\end{equation}
by triangle inequality. The desired upper bound $h_{\alp,2}$ follows immediately by
combining \eqref{eq:vjbound} and \eqref{eq:djbound2}. 
\end{proof}

For $\alpha=\infty$, an additional assumption on the margin of $\beta_{0(j)}$
is needed to bound the difference between
$V_{\alp}(\delta; \tdbeta, u)$ and $V_{\alp}(\delta;\beta_0,u)$.
Following the definition in \eqref{eq:margin}, let $k^*(\theta)=\pi(1)$ if $d(\theta)>0$.
It is easy to show that for any $\|\Delta\|< d(\theta)$, 
\begin{equation*}
k^*(\theta+\Delta) =k^*(\theta)=\pi(1),\quad\quad
\sgn(\theta_{\pi(1)}+\Delta_{\pi(1)})=\sgn(\theta_{\pi(1)}),
\end{equation*}
and the function $f(x)=\|x\|_{\infty}$ is differentiable at $(\theta+\Delta)$:
\begin{equation}\label{eq:gradLinf}
\nabla f(\theta+\Delta)=\sgn(\theta_{\pi(1)}) e_{\pi(1)},
\end{equation} 
where $e_j$ is the $j$th unit vector in $\R^m$.
If $d(\beta_{0(j)})>0$ for all $j\in A_{01}$, define
\begin{align}
\eta_{4}  & = \sup_{j \in A_{01}} \frac{\|\tdbeta_{(j)}-\beta_{0(j)} \|}{d(\beta_{0(j)})},  \label{eq:defeta4}\\
\scrB_{\infty,1} & = \scrB_{\infty,0} \cap \{\tdbeta\in\R^p: \eta_{4} \in [0,1)\}. \label{eq:defscrD}
\end{align}

\begin{lemma}\label{lm:boundVforbeta_inf}  
Let $\alp=\infty$,  $\tdbeta\in\scrB_{\infty,1}$, and assume that   
\begin{align}
\inf_{j\in A_{01}} \frac{d(\beta_{0(j)})}{\sqpj} & > \frac{M_2}{r_n(1-\eta_4)}. \label{eq:margincond} 
\end{align}
Then for any $u\in \R^p$ and $\delta \in \scrD(M_2)$, we have
\begin{align}\label{eq:boundVforbeta_inf}
\left|V_{\infty}(\delta; \tdbeta, u) - V_{\infty}(\delta;\beta_0,u)\right| \leq 2n \lambda\sum_{A_{02}} w_j \|\beta_{0(j)}\|_{\infty}\defi h_{\infty,2}.
\end{align}
\end{lemma}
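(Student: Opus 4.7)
The plan is to mimic the structure of the proof of Lemma~\ref{lm:boundVforbeta_alpha}, with one crucial simplification available for $\alpha=\infty$: by the formula \eqref{eq:gradLinf}, the gradient $\nabla f$ of $f(x)=\|x\|_\infty$ is locally constant (equal to $\sgn(\theta_{\pi(1)})e_{\pi(1)}$) on the open ball of radius $d(\theta)$ around any $\theta$ with positive margin. This means the contribution from the active groups $j\in A_{01}$ vanishes entirely, rather than being controlled by an $(\alpha-1)$-type factor.

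First I would write the difference as
\begin{equation*}
V_\infty(\delta;\tdbeta,u)-V_\infty(\delta;\beta_0,u)=n\lambda\sum_{j\in A_0}w_j\nabla_j,
\end{equation*}
where $\nabla_j=\bigl(\|\tdbeta_{(j)}+r_n^{-1}\delta_{(j)}\|_\infty-\|\tdbeta_{(j)}\|_\infty\bigr)-\bigl(\|\beta_{0(j)}+r_n^{-1}\delta_{(j)}\|_\infty-\|\beta_{0(j)}\|_\infty\bigr)$, using $G(\tdbeta)\subset A_{01}\subset A_0$ as in the proof of Lemma~\ref{lm:boundVforbeta_alpha}. I would then split the sum over $A_{01}$ and $A_{02}$ and treat the two cases separately.

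For $j\in A_{01}$, the key step is to verify that each of the four points $\tdbeta_{(j)}$, $\beta_{0(j)}$, $\tdbeta_{(j)}+c_1 r_n^{-1}\delta_{(j)}$ and $\beta_{0(j)}+c_2 r_n^{-1}\delta_{(j)}$ (for any $c_1,c_2\in[0,1]$) lies in the open ball $B(\beta_{0(j)},d(\beta_{0(j)}))$. By the definition of $\eta_4$ in \eqref{eq:defeta4}, $\|\tdbeta_{(j)}-\beta_{0(j)}\|\le \eta_4 d(\beta_{0(j)})$; by $\delta\in\scrD(M_2)$ we have $\|\delta_{(j)}\|\le M_2\sqpj$; and the margin hypothesis \eqref{eq:margincond} rearranges to $M_2\sqpj/r_n<(1-\eta_4)d(\beta_{0(j)})$. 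Triangle inequality then gives
\begin{equation*}
\|\tdbeta_{(j)}+c_1 r_n^{-1}\delta_{(j)}-\beta_{0(j)}\|\le \eta_4 d(\beta_{0(j)})+(1-\eta_4)d(\beta_{0(j)})=d(\beta_{0(j)}),
\end{equation*}
and similarly for the other point, both strict when $\eta_4<1$. Therefore $f(x)=\|x\|_\infty$ is differentiable along the segments joining the endpoints, with gradient $\sgn(\beta_{0,\pi(1)})e_{\pi(1)}$ constant throughout, where $\pi$ is the permutation associated with $\beta_{0(j)}$. Applying the mean value theorem as in \eqref{eq:gradfj}--\eqref{eq:gradfjstar} then yields $\nabla_j=[\nabla f(\xi_j)-\nabla f(\xi_j^*)]^\trans \delta_{(j)}/r_n=0$ since the two gradients coincide.

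For $j\in A_{02}$, the argument is identical to the $\alp<\infty$ case: $\tdbeta_{(j)}=0$ by \eqref{eq:defB0_alpha}, so $\nabla_j=\|r_n^{-1}\delta_{(j)}\|_\infty-\|\beta_{0(j)}+r_n^{-1}\delta_{(j)}\|_\infty+\|\beta_{0(j)}\|_\infty$, and triangle inequality gives $|\nabla_j|\le 2\|\beta_{0(j)}\|_\infty$. Summing over $j\in A_0$ with weights $w_j$ then delivers the bound $h_{\infty,2}$. The only real obstacle is the geometric verification in the first step; once the margin inequality is unpacked correctly and the perturbation sizes are lined up, the vanishing of $\nabla_j$ on $A_{01}$ follows automatically from the piecewise-linear structure of $\|\cdot\|_\infty$.
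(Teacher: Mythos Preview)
Your proposal is correct and follows essentially the same route as the paper's proof: decompose the difference into $n\lambda\sum_{j\in A_0}w_j\nabla_j$, show that for $j\in A_{01}$ the margin condition forces all relevant points to lie in the open ball $B(\beta_{0(j)},d(\beta_{0(j)}))$ on which $\nabla\|\cdot\|_\infty$ is the constant vector $\sgn(\beta_{0,\pi(1)})e_{\pi(1)}$, whence $\nabla_j=0$, and handle $j\in A_{02}$ by the triangle inequality exactly as in \eqref{eq:djbound2}. One small presentational point: the strict inequality you need comes from the strictness of \eqref{eq:margincond} (giving $M_2\sqpj/r_n<(1-\eta_4)d(\beta_{0(j)})$), not merely from $\eta_4<1$; with that clarification your bound $\|\tdbeta_{(j)}+c_1 r_n^{-1}\delta_{(j)}-\beta_{0(j)}\|<d(\beta_{0(j)})$ holds for all $c_1\in[0,1]$ as required.
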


\begin{proof}
Let $j\in A_{01}$. Define $\xi_j=\tdbeta_{(j)}+c_1 \delta_{(j)}/r_n$, 
$\xi^*_j=\beta_{0(j)}+c_2 \delta_{(j)}/r_n$, and $f(x)=\|x\|_{\infty}$ for $x\in\R^{p_j}$.
We will show that $\nabla f(\xi_j)=\nabla f(\xi_j^*)$ for every $(c_1,c_2)\in(0,1)^2$.
Assumption~\eqref{eq:margincond} implies that 
\begin{equation*}
\eta_{4}d(\beta_{0(j)}) + \frac{M_2 \sqpj}{r_n} <d(\beta_{0(j)})
\end{equation*}
and consequently, 
\begin{equation}\label{eq:margininequality}
\|\tdbeta_{(j)}-\beta_{0(j)} \| + \|r_n^{-1} \delta_{(j)}\| <d(\beta_{0(j)}).
\end{equation}
It then follows that
\begin{equation*}
\|\xi_j^*-\beta_{0(j)}\| <\|r_n^{-1} \delta_{(j)}\| <d(\beta_{0(j)}).
\end{equation*}
Again by \eqref{eq:margininequality} we have $\|\tdbeta_{(j)}-\beta_{0(j)} \|<d(\beta_{0(j)})$ and
\begin{equation*}
\|\tdbeta_{(j)}+ r_n^{-1} \delta_{(j)}-\beta_{0(j)} \| <d(\beta_{0(j)}).
\end{equation*}
Then $\|\xi_j - \beta_{0(j)}\|< d(\beta_{0(j)})$
since $\xi_j$ lies between $\tdbeta_{(j)}$ and $\tdbeta_{(j)}+ r_n^{-1} \delta_{(j)}$.
By \eqref{eq:gradLinf} with $\theta=\beta_{0(j)}$, 
\begin{equation*}
\nabla f(\xi_j^*)=\nabla f(\xi_j)=\sgn(\theta_{\pi(1)}) e_{\pi(1)}.
\end{equation*}
Now we arrive at \eqref{eq:gradfj}, \eqref{eq:gradfjstar}, and $v_j=0$,
which implies $\nabla_j=0$ for all $j\in A_{01}$. Then \eqref{eq:boundVforbeta_inf} follows immediately
from \eqref{eq:djbound2} with $\alp=\infty$.
\end{proof}

Our next result is the counterpart of Lemma~\ref{lm:eventsubset}.
Define three events, $E^*_t$ and $E_2$ as in \eqref{eq:defE*t} and \eqref{eq:defE2}, and 
\begin{equation}\label{eq:defcalE*_alp}
\calE^*_{\alp} =\cap_{j=1}^J\{\|U^*_{(j)}\|_{\alp^*}\leq \lambda/2\}.
\end{equation}

\begin{lemma}\label{lm:eventsubset_alp}
Let $\alp\in[2,\infty]$ and $\alp^*$ be conjugate to $\alp$. Choose $\lambda$ as in \eqref{eq:lambdaasy_alp}.
Assume that
\begin{equation}\label{eq:aspforsubset}
q_0 p_{\max}^{2/\alp^*-1}(p_{\max} \vee \log J) \ll \sqn,
\end{equation} 
$r_n=O(\sqn)$, and $r_n^2h_{\alp,2}/n=o(p_{\min})$. Suppose that \eqref{eq:groupstrconvex} 
holds with universal constants $(\kappa_1,\kappa_2)$ when $n$ is large. 
Then for every $t\in(0,1)$, there is $N_1$ such that 
\begin{equation*}
\{\tdbeta \in \scrB_{\alp,0}\} \cap \calE^*_{\alp} \cap E^*_t \subset E_2
\end{equation*} 
when $n>N_1$.
\end{lemma}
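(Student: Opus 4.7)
The plan is to mirror the proof of Lemma~\ref{lm:eventsubset} essentially line by line, substituting each ingredient by its $(1,\alpha)$-analogue developed earlier in the appendix. I would proceed by contradiction: condition on $\{\tdbeta \in \scrB_{\alp,0}\} \cap \calE^*_\alp \cap E^*_t$ and suppose $\sdelta \notin \scrD(M_2)$. The same subadditive continuous gadget $f(x)=\sup_{j\in A_0}\|x_{(j)}\|/\sqpj$ works unchanged: since $f(\shdelta)\leq tM_2<M_2<f(\sdelta)$, the intermediate value theorem produces $c\in(0,1)$ with $f(\gamma)=M_2$ for $\gamma=c\shdelta+(1-c)\sdelta$, placing both $\gamma$ and $\shdelta$ in $\scrD(M_2)$. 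Then Lemma~\ref{lm:boundVforbeta_alpha} (or Lemma~\ref{lm:boundVforbeta_inf} when $\alp=\infty$) yields $|V_{\alpha}(\delta;\tdbeta,U^*)-V_{\alpha}(\delta;\beta_0,U^*)|\leq h_{\alp,2}$ for each of $\delta\in\{\gamma,\shdelta\}$.

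Next I would repeat the convexity argument used in the proof of Lemma~\ref{lm:boundDeltaforbeta}: Lemma~\ref{lm:convexity} (whose proof only uses the KKT condition and the subgradient inequality and therefore carries over to the $(1,\alpha)$-norm penalty) applied to $V_\alpha(\cdot;\tdbeta,U^*)$ at its minimizer $\sdelta$, sandwiched with the two $h_{\alp,2}$-deviations, gives
\begin{equation*}
V_\alpha(\gamma;\tdbeta,U^*) \geq V_\alpha(\shdelta;\tdbeta,U^*) + \tfrac{n}{2r_n^2}\Delta^{\trans}\bfC\Delta - 2h_{\alp,2},
\end{equation*}
where $\Delta=\gamma-\shdelta$. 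Combined with $V_\alpha(\gamma;\tdbeta,U^*)\geq V_\alpha(\sdelta;\tdbeta,U^*)$ and the convexity bound $V_\alpha(\gamma;\tdbeta,U^*)\leq cV_\alpha(\shdelta;\tdbeta,U^*)+(1-c)V_\alpha(\sdelta;\tdbeta,U^*)$, a contradiction is reached as soon as $\Delta^{\trans}\bfC\Delta > 4r_n^2 h_{\alp,2}/n$.

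The remaining and main technical step is to verify this inequality for all sufficiently large $n$. Appealing to \eqref{eq:groupstrconvex} with exponent $\alpha$, it suffices to show
\begin{equation*}
\kappa_1\|\Delta\|^2 \,-\, \kappa_2\rho^2(\alp^*)\|\Delta\|_{\calG,\alpha}^2 \,-\, 4r_n^2 h_{\alp,2}/n \;>\; 0.
\end{equation*}
Subadditivity of $f$ gives $\|\Delta\|\geq (1-t)M_2 p_{\min}^{1/2}$, so the first term is at least $\kappa_1 p_{\min}(1-t)^2 M_2^2$, and the third term is $o(p_{\min})$ by hypothesis. For the middle term, apply Theorem~\ref{thm:generalization} on $\calE^*_\alp$ (noting $|G(\tdbeta)|\leq q_0$) to get $\|\sdelta\|_{\calG,\alpha}\vee\|\shdelta\|_{\calG,\alpha}\leq r_n h_{\alp,1}$, whence $\|\Delta\|_{\calG,\alpha}\leq 2 r_n h_{\alp,1}=O(r_n\lambda q_0)$. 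Combining with Lemma~\ref{lm:rhostar}, $\rho^2(\alp^*)=O(p_{\max}^{2/\alp^*-1}(p_{\max}\vee\log J)/n)$, the chosen $\lambda^2\asymp p_{\max}^{2/\alp^*-1}(p_{\max}\vee\log J)/n$, and $r_n=O(\sqn)$, I obtain
\begin{equation*}
\rho^2(\alp^*)\|\Delta\|_{\calG,\alpha}^2 = O\!\left(\frac{p_{\max}^{4/\alp^*-2}(p_{\max}\vee\log J)^2 q_0^2}{n}\right) = o(1),
\end{equation*}
where the last equality is exactly the squared form of \eqref{eq:aspforsubset}. This drives the middle term to zero while the first term remains of order $p_{\min}$, yielding the strict inequality for $n>N_1$ and completing the contradiction.

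The main obstacle is really just the careful bookkeeping of the $\alpha$-dependent powers of $p_{\max}$ coming simultaneously from $\rho(\alp^*)$, from the block-lasso error bound $h_{\alp,1}$, and from the prescribed order of $\lambda$; the scaling \eqref{eq:aspforsubset} has been tuned precisely so these terms cancel into the target $o(1)$. A small additional subtlety arises at $\alp=\infty$, where Lemma~\ref{lm:boundVforbeta_inf} replaces Lemma~\ref{lm:boundVforbeta_alpha} and also requires the margin condition \eqref{eq:margincond}; one should therefore read the hypothesis $\tdbeta\in\scrB_{\alp,0}$ as implicitly strengthened to $\tdbeta\in\scrB_{\infty,1}$ in that boundary case, which will be guaranteed when Lemma~\ref{lm:eventsubset_alp} is later invoked under the hypotheses of Theorem~\ref{thm:consistency_inf}.
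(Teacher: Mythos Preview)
Your proposal is correct and follows essentially the same route as the paper's own proof: reduce to the proof of Lemma~\ref{lm:eventsubset}, replace the $\alpha=2$ ingredients by their $(1,\alpha)$-analogues (Lemmas~\ref{lm:boundVforbeta_alpha}/\ref{lm:boundVforbeta_inf}, Theorem~\ref{thm:generalization}, Lemma~\ref{lm:rhostar}), and verify that $\rho^2(\alp^*)\|\Delta\|_{\calG,\alpha}^2=O\bigl(r_n^2\rho^2(\alp^*)h_{\alp,1}^2\bigr)=o(1)$ via the scaling \eqref{eq:aspforsubset}. Your bookkeeping of the $p_{\max}^{2/\alp^*-1}$ factors is accurate, and your remark on the $\alp=\infty$ case (reading $\scrB_{\alp,0}$ as $\scrB_{\infty,1}$ together with \eqref{eq:margincond}) is a correct clarification that the paper leaves implicit.
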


\begin{proof}
With simple modifications, the proof follows closely to that of Lemma~\ref{lm:eventsubset}. 
The only difference is to show
\begin{equation}\label{eq:liminfsuff_alp}
\liminf_{n\to\infty} \left\{\kappa_1 \|\Delta\|^2 - \kappa_2 \rho^2(\alp^*) \|\Delta\|^2_{\calG,\alp} -4r_n^2h_{\alp,2}/n\right\} >0
\end{equation}
instead of \eqref{eq:liminfsuff}, where $\Delta=(1-c)(\sdelta-\shdelta)$ for some $c\in(0,1)$. 
It then suffices to show that the second term is $o(1)$.
Since $|G(\tdbeta)|\leq |A_0|= q_0$ for $\tdbeta\in\scrB_{\alp,0}$, it follows from Theorem~\ref{thm:generalization} that 
on $\calE^*_{\alp}$ we have $\|\sdelta\|_{\calG,\alp}\leq r_n h_{\alp,1}$ and 
$\|\shdelta\|_{\calG,\alp}\leq r_n h_{\alp,1}$, which implies
\begin{equation*}
\|\Delta\|_{\calG,\alp}\leq \|\sdelta\|_{\calG,\alp}+\|\shdelta\|_{\calG,\alp} \leq 2r_n h_{\alp,1}.
\end{equation*}
As $r_n=O(\sqn)$, 
\begin{align}\label{eq:rhorh_alp}
\rho(\alp^*) r_n h_{\alp,1}=O\left(q_0 p_{\max}^{2/\alp^*-1}(p_{\max} \vee \log J)/\sqn\right)=o(1)
\end{align}
by \eqref{eq:lambdaasy_alp}, the upper bound for $\rho(\alp^*)$ \eqref{eq:boundrhostar}, 
and the assumption \eqref{eq:aspforsubset}.
\end{proof}

We are now in a position to prove the main results.

\begin{proof}[Proof of Theorem~\ref{thm:consistency_alpha}]
First, we note that Lemma~\ref{lm:boundDeltaforbeta} holds with $h_{\alp,2}$ in place of $h_2$. 
That is, $\{\tdbeta\in\scrB_{\alp,0}\}\cap E_1^* \cap E_2$ implies
\begin{equation*}
\frac{1}{n}\|\bfX(\sdelta-\shdelta)\|^2 \leq 4 r_n^2 h_{\alp,2}/n.
\end{equation*}
Following a similar proof as that of Theorem~\ref{thm:finiteknownvar},
together with Lemma~\ref{lm:eventsubset_alp},
we can show that on the event $\{\tdbeta\in\scrB_{\alp,0}\}$,
\begin{equation}\label{eq:finiteboundDelta_alp}
\Prob\left[\|\sdelta-\shdelta\|^2
\leq \frac{4r_n^2}{\kappa_1 n}\left\{h_{\alp,2}+\kappa_2n\rho^2(\alp^*)h_{\alp,1}^2\right\} \left|\, \tdbeta\right.\right]\geq \Prob(\calE_{\alp}^* \cap E^*_t)
\end{equation}
when $n>N_1$.
Next, we show that $\eta_{\alp,2}=o(1)$ and $r_n^2h_{\alp,2}/n=o(1)$,
in place of \eqref{eq:upperrho} and \eqref{eq:boundh2} in the proof of  Theorem~\ref{thm:asympknownvar}.
By \eqref{eq:defrho_alpha} and \eqref{eq:condbeta0_alp}
\begin{align}\label{eq:etaalp2bound}
\eta_{\alp,2} 
\ll \frac{n^{1/2}}{r_n p_{\max}^{1/\alp^*} q_0 (p_{\max} \vee \log J)^{1/2}} \asymp \frac{1}{r_n q_0\lmd(p_{\max})^{1/2}},
\end{align}
where the second step is due to the choice of $\lmd$ \eqref{eq:lambdaasy_alp}.
It then follows from 
\eqref{eq:orderofr_n} that $\eta_{\alp,2} =o(q_0^{-1/2} p_{\max}^{-1})=o(1)$.
Similar to \eqref{eq:eta1and2}, $\eta_{\alp,1}=O(\eta_{\alp,2})=o(1)$ for $\tdbeta\in\scrB(M_1)$. 
Thus, from \eqref{eq:boundVforbeta_alpha} we have
\begin{align*}
\frac{r_n^2h_{\alp,2}}{n}  =O(r_n \eta_{\alp,2}q_0 \lambda  \sqpmax) 
+O\left(r_n^2 \lambda q_0 \sup_{S_2} \|\beta_{0(j)}\|_{\alp} \right) 
=o(1)
\end{align*}
by \eqref{eq:etaalp2bound} and \eqref{eq:condbeta0_alp}.
Furthermore, $r_n^2\rho^2(\alp^*) h_{\alp,1}^2=o(1)$ by \eqref{eq:rhorh_alp}.
Therefore, the upper bound for $\|\sdelta-\shdelta\|^2$ in \eqref{eq:finiteboundDelta_alp} is $o(1)$.
Then, one may show all the 
desired results by an essentially identical proof to 
that of Theorem~\ref{thm:asympknownvar}.
\end{proof}

\begin{proof}[Proof of Theorem~\ref{thm:consistency_inf}]
It suffices to verify \eqref{eq:margincond} when $n$ is large 
so that Lemma~\ref{lm:boundVforbeta_inf} may be applied.
By \eqref{eq:marginasy} and \eqref{eq:orderofr_n} with $p_{\max}\asymp p_{\min}$, 
\begin{equation*}
\inf_{j\in S_1} \frac{d(\beta_{0(j)})}{\sqpj}\gg \lambda\left(q_0/p_{\max}\right)^{1/2}=\Omega(1/r_n).
\end{equation*}
Then, it remains to show that $\eta_4=o(1)$. 
For $\tdbeta\in\scrB(M_1)$, 
\begin{equation*}
\sup_{j\in S_1}\|\tdbeta_{(j)}-\beta_{0(j)} \|=O\left(\sqpmax/r_n\right)=O(\lambda \surd{q_0}).
\end{equation*}
By definition \eqref{eq:defeta4},
\begin{align*}
\eta_4  =\sup_{j\in S_1}\frac{\|\tdbeta_{(j)}-\beta_{0(j)} \|}{d(\beta_{0(j)})} 
&\leq \sup_{S_1}\|\tdbeta_{(j)}-\beta_{0(j)} \| \cdot\sup_{j\in S_1}\frac{1}{d(\beta_{0(j)})} \\
& = O(\lambda \surd{q_0}) \cdot o((\lambda \surd{q_0})^{-1}) =o(1),
\end{align*}
where we have used assumption \eqref{eq:marginasy}.
\end{proof}

\section{Supplemental numerical results}\label{sec:numsupp}

\subsection{Results for individual inference}\label{sec:individualresults}

Let $r_A$, $r_I$ and $r$ denote the coverage rates
for active, zero, and all coefficients, respectively, and let $L_A$, $L_I$
and $L$ be the corresponding average interval lengths. 
Note that $1-r_I$ reports the type-I error rate of the test, $H_k: \beta_{0k}=0$, for $k\in \N_p$, 
and the power can be calculated 
by checking whether or not an estimated interval for an active coefficient covers zero.
Reported in Table~\ref{tab:singlesim} are the average results for each of the eight data generation settings
in the first simulation study.

\begin{table}[ht]
\caption{Power and confidence intervals for inference on individual coefficients\label{tab:singlesim}}
\begin{center}
\begin{tabular}{cclrrrrrrr}
  \hline  \hline
  \multicolumn{2}{c}{Data Setting} & \multirow{2}{*}{Method} & \multirow{2}{*}{PWR} & \multirow{2}{*}{$r_A$} & \multirow{2}{*}{$L_A$} & \multirow{2}{*}{$r_I$} & \multirow{2}{*}{$L_I$} & \multirow{2}{*}{$r$} & \multirow{2}{*}{$L$} \\
  \cline{1-2}
$(n,p)$ & $(a,d)$ &  & & & & & && \\
\hline
\multirow{12}{*}{$(100,200)$}
		&\multirow{3}{*}{(1, i)}  		&	bootstrap		& 54.0 & 43.5 & 0.456 & 96.7 & 0.136 & 94.0 & 0.152 \\    
		&						&	de-sparsified	& 67.5 & 76.5 & 0.540 & 89.3 & 0.542 & 88.6 & 0.542 \\   
		&						&	de-biased		& 60.0 & 81.0 & 0.566 & 98.8 & 0.568 & 97.9 & 0.568 \\  
		&\multirow{3}{*}{(1, ii)}		&	bootstrap		& 61.0 & 56.5 & 0.414 & 96.4 & 0.110 & 94.4 & 0.127 \\  
		&					 	&	de-sparsified	& 74.5 & 74.5 & 0.434 & 86.1 & 0.433 & 85.6 & 0.433 \\ 
		&						&	de-biased		& 68.0 & 83.0 & 0.443 & 98.8 & 0.441 & 98.1 & 0.441 \\   
		&\multirow{3}{*}{(2, i)}  		&	bootstrap		& 60.5 & 54.5 & 0.426 & 96.2 & 0.131 & 94.1 & 0.146 \\ 
		&						&	de-sparsified	& 70.0 & 79.0 & 0.498 & 85.1 & 0.503 & 84.8 & 0.503 \\  
		&						&	de-biased		& 60.0 & 95.0 & 0.519 & 98.9 & 0.525 & 98.7 & 0.525 \\  
		&\multirow{3}{*}{(2, ii)}		&	bootstrap		& 72.0 & 60.0 & 0.408 & 96.2 & 0.138 & 94.4 & 0.152 \\ 
		&					 	&	de-sparsified	& 79.5 & 85.0 & 0.422 & 82.8 & 0.420 & 82.9 & 0.420 \\ 
		&						&	de-biased		& 72.0 & 96.0 & 0.443 & 99.1 & 0.441 & 99.0 & 0.442 \\  
\hline
\multirow{12}{*}{$(100,400)$}
		&\multirow{3}{*}{(1, i)}  		&	bootstrap		& 47.0 & 35.5 & 0.335 & 96.2 & 0.065 & 94.7 & 0.071 \\    
		&						&	de-sparsified	& 65.0 & 74.0 & 0.537 & 85.0 & 0.539 & 84.8 & 0.539 \\   
		&						&	de-biased		& 53.0 & 73.5 & 0.523 & 99.1 & 0.526 & 98.5 & 0.526 \\  
		&\multirow{3}{*}{(1, ii)}		&	bootstrap		& 59.5 & 41.5 & 0.359 & 96.9 & 0.077 & 95.5 & 0.084 \\  
		&					 	&	de-sparsified	& 70.5 & 69.0 & 0.485 & 88.4 & 0.483 & 88.0 & 0.483 \\  
		&						&	de-biased		& 64.0 & 68.5 & 0.429 & 99.2 & 0.428 & 98.4 & 0.428 \\   
		&\multirow{3}{*}{(2, i)}  		&	bootstrap		& 62.0 & 46.5 & 0.416 & 96.5 & 0.095 & 95.3 & 0.103 \\ 
		&						&	de-sparsified	& 72.0 & 85.5 & 0.550 & 83.3 & 0.554 & 83.4 & 0.554 \\  
		&						&	de-biased		& 65.5 & 89.0 & 0.506 & 99.3 & 0.510 & 99.1 & 0.510 \\  
		&\multirow{3}{*}{(2, ii)}		&	bootstrap		& 71.0 & 40.5 & 0.374 & 96.8 & 0.099 & 95.4 & 0.106 \\ 
		&					 	&	de-sparsified	& 78.5 & 83.5 & 0.501 & 81.2 & 0.499 & 81.2 & 0.499 \\ 
		&						&	de-biased		& 73.0 & 85.0 & 0.445 & 99.2 & 0.444 & 98.8 & 0.444 \\  
\hline
\end{tabular}
\end{center}
PWR: power; $r$: coverage rate; $L$ interval length.
\end{table}

Largely consistent with the results in Table~\ref{tab:groupsim}, we see that our bootstrap method
shows a good control of the type-I error, implied by the observation that $r_I$, 
the coverage rate for zero coefficients, is slightly greater than but very close to $95\%$.
The $r_I$ for the de-sparsified lasso method is uniformly lower than $0.9$, dropping to $0.8$ in some cases,
implying that its type-I error rate is again substantially higher than the desired level of $5\%$.  
Although this might lead to some moderate degree of increase in power,
we argue that a strict control of false discoveries is critical for large-scale screening when $p$ is large
and $p\gg s_0$. For instance, the type-I error rate of the de-sparsified lasso is around $15\%$ for $n=100, p=400$.
This means that it brought about $0.1p=40$ more than expected false positives,
which was much larger than the number of true positives $s_0=10$. This would be a severe disadvantage
of the de-sparsified lasso method in the typical high-dimensional and sparse setting, $p\gg s_0$, 
under which the method was developed. The de-biased lasso, on the contrary, reached very high
coverage of zero coefficients, close to or above 99\% for all the cases. Its coverage of nonzero coefficients is
also seen to be higher than the other two methods.
However, the coverage rates for active coefficients of all three methods 
can be substantially lower than the desired level in many cases. 
For our method, this was caused by the inaccuracy in detecting active coefficients 
by thresholding a lasso estimate, which kept only 30\% to 40\% of them for the case $(n,p)=(100,400)$.
Even including the largest 20 components of the lasso would still identify  $<75\%$ of the true active variables.
Furthermore, without grouping the signal strength became small and
Assumption~\ref{as:betasignal} was not satisfied, 
as $\beta_{0k}$ was uniformly distributed over $(-1,1)$.

To give a concrete illustration of the effect of grouping variables,
for the setting $(n,p,a,d)=(100,400,1,\text{i})$ grouping improved the power of our method from $0.47$
to $1$ or $0.75$ and increased its $r_A$ from $35.5\%$ to $95\%$ or $85\%$, depending on the way of grouping. 
Grouping may also serve as a remedy for the observed low power
of the bootstrap method in individual inference. Given identified active groups, detection of individual nonzero coefficients
becomes an easier job due to the substantial dimension reduction.

\subsection{Running time}\label{sec:time}

We applied the de-sparsified lasso and our parametric bootstrap method with $N=300$ and $p_j=1$
on simulated data sets for a wide range of
combinations of $(n,p)$ in a high-dimensional setting with $p>n$. 
Reported in Table~\ref{tab:time} are the average running times per data set of the two methods 
for $10$ different combinations of $(n,p)$.
One sees that our method is uniformly faster across all scenarios and can be
significantly faster when the data size is large. 
For example, our bootstrap method is 12 times faster than the de-sparsified lasso when $n=400$ and $p=800$,
which corresponds to one order of magnitude improvement in terms of speed.

\begin{table}[h]
\centering
\caption{Average runtimes (in seconds) of de-sparsified lasso/bootstrap \label{tab:time}}
\vspace*{.1in}
\begin{tabular}{c|cccc}
  \hline  \hline
$n\setminus p$ & 100 & 200 & 400 & 800 \\ \hline
50	& 12.98/11.03	& 29.29/22.17	& 73.61/46.55	& 167.53/98.37  \\
100 	& 			& 41.19/20.72	& 108.62/42.54	& 233.26/83.80   \\
200 	&			&			& 223.30/43.86	& 496.69/91.54   \\
400	&			&			&			& 1242.78/102.33 \\
\hline
\end{tabular}
\end{table}

\bibliographystyle{asa}
\bibliography{sparsereferences,CBrefs}

\end{document}